\newcommand{\dom}{\mbox{Dom}}
\newcommand{\hq}{\hat{Q}}
\newcommand{\htt}{\hat{T}}
\newcommand{\iot}{\int_{0}^{t}}
\newcommand{\ot}{[0,t]}
\newcommand{\ott}{[0,T]}
\newcommand{\1}{{\bf 1}}
\def\RR{\mathbb{R}}
\def\EE{\mathbb{E}}
\newcommand{\C}{\mathbb C}
\newcommand{\R}{\mathbb R}
\newcommand{\N}{\mathbb N}
\newcommand{\PP}{\mathbb P}
\newcommand{\be}{\mathbb E}
\newcommand{\bp}{\mathbb{P}}
\newcommand{\ca}{\mathcal A}
\newcommand{\ce}{\mathcal E}
\newcommand{\cf}{\mathcal F}
\newcommand{\ch}{\mathcal H}
\newcommand{\ck}{\mathcal K}
\newcommand{\cn}{\mathcal N}
\newcommand{\cs}{\mathcal S}
\newcommand{\al}{\alpha}
\newcommand{\ga}{\gamma}
\newcommand{\gga}{\Gamma}
\newcommand{\ka}{\kappa}
\newcommand{\la}{\lambda}
\newcommand{\laa}{\Lambda}
\newcommand{\oom}{\Omega}
\newcommand{\si}{\sigma}
\newcommand{\te}{\theta}
\newcommand{\vp}{\varphi}
\newcommand{\ep}{\varepsilon}
\newcommand{\lp}{\left(}
\newcommand{\rp}{\right)}
\newcommand{\lc}{\left[}
\newcommand{\rc}{\right]}
\newcommand{\lcl}{\left\{}
\newcommand{\rcl}{\right\}}
\newcommand{\lln}{\left|}
\newcommand{\rrn}{\right|}
\newcommand{\lla}{\left\langle}
\newcommand{\rra}{\right\rangle}
\newtheorem{theorem}{Theorem}[section]
\newtheorem{corollary}[theorem]{Corollary}
\newtheorem{definition}[theorem]{Definition}
\newtheorem{lemma}[theorem]{Lemma}
\newtheorem{proposition}[theorem]{Proposition}
\theoremstyle{remark}
\newtheorem{remark}[theorem]{Remark}
\let\Section=\section
\def\section{\setcounter{equation}{0}\Section}
\begin{document}

\title[Parabolic Anderson model]
{Spatial asymptotics for the parabolic Anderson model  \\
driven by a Gaussian rough  noise}

\author[X. Chen, Y. Hu,   D. Nualart, S. Tindel]{Xia Chen \and Yaozhong Hu
  \and David Nualart \and Samy Tindel}

\address{Xia Chen:  Department of Mathematics, University of Tennessee Knoxville,  TN 37996-1300, USA}
\email{xchen@math.utk.edu}
\thanks{X. Chen is partially supported by a grant 
from the Simons Foundation \#244767}

\address{Yaozhong Hu  and David Nualart: Department of Mathematics, University of Kansas,   Lawrence, KS 66045, USA.}
\email{yhu@ku.edu,   nualart@ku.edu}

\address{Samy Tindel:  Department of Mathematics, Purdue University, West Lafayette, IN 47907, USA}
\email{stindel@purdue.edu}
\thanks{Y. Hu is partially supported by a grant from the Simons Foundation \#209206}
\thanks{D. Nualart is supported by the NSF grant  DMS1512891 and the ARO grant FED0070445}
\thanks{S. Tindel is supported by the NSF grant  DMS1613163}

\subjclass[2010]{60G15; 60H07; 60H10; 65C30}

\date{\today}

\keywords{Stochastic heat equation, fractional Brownian motion,
Feynman-Kac formula, Wiener chaos expansion, intermittency. }

\begin{abstract}
The aim of this paper is to establish the almost sure asymptotic behavior as the space variable becomes 
 large, for the solution to  the one spatial dimensional stochastic heat equation 
 driven by a Gaussian noise which  is white in time and which
  has the covariance structure of a fractional Brownian motion with Hurst parameter
 $H \in \left( \frac 14, \frac 12 \right)$ in the space variable.  
\end{abstract}

\maketitle

\section{Introduction}
 
This article is concerned with a linear stochastic heat equation on $\R_{+}\times \R$, formally written as 
\begin{equation}\label{eq:spde-pam}
\frac{\partial u}{\partial t}=\frac{1}{2}\frac{\partial^2 u}{\partial x^2}+ u \, \dot{W}\,, \quad t\ge 0, \quad x\in\mathbb{R}\, ,
\end{equation}
where $\dot{W}$ is a Gaussian  noise which is white in time and colored in space, and we are interested in regimes where the spatial behavior of  $\dot{W}$ is rougher than white noise. More specifically, our noise  can be seen as the formal space-time derivative of a centered Gaussian process whose covariance is given by
\begin{equation}  \label{cov}
\be \left[
 W(s,x)W(t,y)\right]=
\frac{1}{2}\left( |x|^{2H}+|y|^{2H}-|x-y|^{2H}\right) \, (s\wedge t),
\end{equation}
where $\frac 14<H<\frac{1}{2}$.   That is, $W$ is a standard Brownian motion in time and a
fractional Brownian motion with Hurst parameter $H$  in  the space variable. Notice that the spatial covariance of $\dot{W}$, which is formally given by $\ga(x)=H(2H-1) |x-y|^{2H-2}$, is not locally integrable when $H<\frac 12$. It is in fact a nonpositive distribution, and therefore the stochastic integration with respect to $W$ cannot be handled by classical theories (see e.g.  \cite{DPZ, Da, Wa}). However, one  has recently been able (cf. \cite{HHLNT}) to give a proper definition of equation \eqref{eq:spde-pam} and to solve it in a space of H\"older continuous processes (see also the recent work \cite{BJQ}, covering the linear case \eqref{eq:spde-pam}). We shall take those results for granted.

Let us now highlight the fact that space-time asymptotics for stochastic heat equations like \eqref{eq:spde-pam} have attracted a lot of attention in the recent past. This line of research stems from different motivations, and among them let us quote the following.  For a fixed $t>0$, the large scale behavior of the function $x\mapsto u(t,x)$ is dramatically influenced by the presence of the noise $\dot{W}$ in \eqref{eq:spde-pam} (as opposed to a deterministic equation with no noise). One way to quantify this assertion is to analyze the asymptotic behavior of $x\mapsto u(t,x)$ as $|x|\to\infty$. Results in this sense include intermittency results, upper and lower bounds for $M_{R}\equiv\sup_{|x|\le R} u(t,x)$ contained in \cite{CJK}, and culminate in the sharp results obtained in \cite{Ch3}. Roughly speaking, in case of a white noise in time like in \eqref{cov}, those articles establish that $\log(M_{R})$ behaves like $[\log(R)]^{\psi}$, for an exponent $\psi$ which depends on the spatial covariance structure of $\dot{W}$. In particular, if the spatial covariance of $\dot{W}$ is described by the Riesz kernel $|x|^{-\al}$ for $\al\in(0,1)$, one gets $\psi=\frac{2}{4-\al}$. This interpolates  between a regular situation in space ($\al=0$ and $\psi=1/2$) and the white noise setting ($\al=1$ and $\psi=2/3$). In any case those results are in sharp contrast with the deterministic case, for which $x\mapsto u(t,x)$ stays bounded.

With these preliminaries in mind, the current contribution completes the space-time asymptotic picture for the stochastic heat equation, covering very rough situations like the ones described by \eqref{cov}. Namely,  we shall get the following spatial asymptotics.

\begin{theorem}\label{thm:space-asymptotics}
Let $W$ be the Gaussian  field given by the covariance \eqref{cov}, and assume $\frac14<H<\frac12$.
Let $u$ be the unique solution to equation~\eqref{eq:spde-pam} driven by $\dot{W}$ with initial condition $u_0=1$ (see Proposition~\ref{prop:exist-unique-FK} for a precise statement), and consider $t>0$. Then
\begin{equation}\label{eq:as-limit-in space}
\lim_{R\to\infty}(\log R)^{-{\frac{1}{1+H}}}\log\lp \max_{\vert x\vert\le R}u(t,x) \rp
= c_0(H)(t \mathcal{E})  ^{H\over 1+H} \  \hskip.2in \text{a.s.},
\end{equation}
where $\mathcal E$ is the variational constant to be defined below in Proposition \ref{lem:max-E-theta} and $c_0(H)$ is a constant depending only on $H$, given by
\begin{equation}  \label{c0}
c_0(H)= (1+H) \lp \frac{c_{H}}{2} \rp^{\frac{1}{1+H}} \lp  \frac{1}{H} \rp^{\frac{H}{1+H}},
\end{equation}
with $c_H$ defined in  (\ref{eq:expr-c1H}).
\end{theorem}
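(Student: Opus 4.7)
The plan is to reduce the almost sure spatial asymptotic to a sharp $L^p$-moment estimate on $u(t,x)$ and then to match an upper and a lower bound for $\max_{|x|\le R}u(t,x)$ by combining discretization with a spatial decoupling argument. The analytic heart of the proof is a Laplace--type asymptotic of the form
\[
\lim_{p\to\infty}\; p^{-\frac{1+H}{H}}\,\log\EE\bigl[u(t,x)^p\bigr] \;=\; K(H)\,t\,\mathcal{E},
\]
uniform in $x\in\R$ by stationarity in space, with the constant $K(H)$ tuned so that its Legendre transform at the critical level reproduces exactly $c_0(H)(t\mathcal E)^{H/(1+H)}$. I would obtain this through the Feynman--Kac representation of $u(t,x)$ furnished by Proposition~\ref{prop:exist-unique-FK} together with a Wiener-chaos moment computation that reduces $\EE[u(t,x)^p]$ to an exponential functional of $p$ independent Brownian motions weighted by the distributional spatial covariance of $\dot W$; the leading large-$p$ term is then extracted through a large-deviation/variational principle whose maximum value is the constant $\mathcal E$ of Proposition~\ref{lem:max-E-theta}.

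For the upper bound, set $a_R=\lambda(\log R)^{1/(1+H)}$ and apply Markov's inequality with $p^*\approx (a_R H/(K(H)(1+H)t\mathcal E))^H$: optimizing in $p$ gives $\PP(u(t,x)>e^{a_R})\le \exp(-q(\lambda)\log R+o(\log R))$ with $q(\lambda)>1$ as soon as $\lambda>c_0(H)(t\mathcal E)^{H/(1+H)}$. Upgrading this pointwise bound to a bound on $\max_{|x|\le R}u(t,x)$ is done by a union bound over a polynomially fine grid $\{x_k\}\subset[-R,R]$ of spacing $R^{-N}$ (with $N$ large but fixed), supplemented by a Kolmogorov/Garsia--Rodemich--Rumsey--type modulus-of-continuity estimate on $x\mapsto u(t,x)$ at sub-grid scales, itself deduced from the H\"older-in-space $L^p$ bounds on increments available from \cite{HHLNT}. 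Borel--Cantelli along $R_n=2^n$ then yields the $\limsup$ upper bound almost surely.

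The lower bound is the technical core. The plan is to exhibit many points in $[-R,R]$ at which $u(t,\cdot)$ is simultaneously large, in spite of the long-range negative correlations of $\dot W$. I would fix a decoupling scale $D_R\to\infty$, pick well-spaced points $\{x_k\}_{k=1}^{N(R)}\subset[-R,R]$ with $|x_k-x_j|\ge D_R$ and $N(R)\asymp R/D_R$, and introduce localized companions $u^{(k)}(t,x_k)$ driven by $\dot W$ restricted to a window of width $D_R/2$ about $x_k$, the restriction being performed at the level of orthogonal projections in the Hilbert space $\HH$ underlying the stochastic integral. The $u^{(k)}$ are then mutually independent; each obeys the same moment asymptotic as $u(t,x_k)$, so a Paley--Zygmund lower tail bound at level $a_R$ with $\lambda$ slightly below the critical value yields $\PP(u^{(k)}(t,x_k)>e^{a_R})\ge R^{-q(\lambda)-o(1)}$. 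Choosing $D_R$ so that $N(R)\gg R^{q(\lambda)}$, independence forces $\max_k u^{(k)}(t,x_k)>e^{a_R}$ with probability tending to one, and a coupling estimate transfers the bound back to $u(t,x_k)$.

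The main obstacle is the decoupling step itself. Because the spatial covariance $H(2H-1)|x|^{2H-2}$ of $\dot W$ is a genuine tempered distribution rather than a positive measure and has only polynomial decay, $\dot W$ cannot be truncated to a spatial window at the level of sample paths; the truncation must be carried out via orthogonal projections in the anisotropic Hilbert space $\HH$, and the resulting coupling error $\|u(t,x_k)-u^{(k)}(t,x_k)\|_{L^p}$ has to be controlled uniformly as $p$ and $R$ tend to infinity simultaneously. Tuning $D_R$ polynomially in $R$ so that this error is negligible on the exponential scale $e^{a_R}$, while keeping $N(R)$ large enough to absorb the Paley--Zygmund cost $R^{-q(\lambda)}$, is the delicate calibration that drives the whole proof.
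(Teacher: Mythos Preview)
Your overall architecture matches the paper's: sharp moment asymptotics via Feynman--Kac and a variational principle, conversion to a tail estimate by Legendre transform, then an upper bound by Markov plus a union bound over a polynomial grid refined by a Garsia--Rodemich--Rumsey continuity estimate, and a lower bound by producing many nearly independent copies of $u(t,x_k)$ at well-separated sites and applying Borel--Cantelli. The upper-bound argument and the moment step are essentially what the paper does.

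The substantive divergence is in the decoupling mechanism for the lower bound. You propose to localize $\dot W$ by orthogonal projection in the Cameron--Martin space $\HH$ onto test functions supported in a spatial window. This is risky: $\HH$ is the homogeneous Sobolev space $\dot H^{1/2-H}$, a \emph{nonlocal} space, so the orthogonal projection of $\mathbf 1_{[0,t]}\varphi$ onto functions supported in a window around $x_k$ is not itself supported in that window, and the projected noises on disjoint windows need not be independent. The paper avoids this entirely by first representing $W$ as a convolution $W(t,\phi)=\int_0^t\int_\R (\phi*\tilde\gamma)(x)\,\hat W(ds,dx)$ against a genuine space--time \emph{white} noise $\hat W$, and then localizing by multiplying the convolution kernel $\tilde\gamma$ by a compactly supported cutoff $\mathcal F\ell_\beta$. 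This makes the spatial covariance of the approximating noise $W_\beta$ compactly supported in direct coordinates, so independence at far-apart sites is automatic once one also truncates the heat kernel and stops the Picard iteration after $n\approx\log\beta$ steps. The resulting approximations $U_{\beta,n}(t,x_k)$ are exactly independent (not just weakly correlated), and the coupling errors $\|u-U_{\beta,n}\|_{L^p}$ are controlled by chaos expansions and hypercontractivity rather than by $L^p$ bounds on a projection remainder. This buys a clean independence structure and makes the calibration of $\beta=\beta(R)$ and $n=n(R)$ explicit, whereas your projection route would still have to quantify residual correlations across windows.

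A second, smaller difference: you invoke Paley--Zygmund for the lower tail of $u^{(k)}(t,x_k)$. The paper instead derives a two-sided tail estimate directly from the moment asymptotics by a one-sided G\"artner--Ellis argument (applied after conditioning on $\{\log u>0\}$), which yields the exact exponent $\hat c_{H,t}$ without the second-moment loss inherent in Paley--Zygmund. Either route can be made to work here, but the G\"artner--Ellis version is cleaner because the moment growth $\exp(cp^{1+1/H})$ already encodes the full Legendre dual.
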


Let us say a few words about our strategy in order to prove Theorem \ref{thm:space-asymptotics}. It can be roughly be divided in two main steps:

\noindent
\textit{(i) Tail estimate for $u(t,x)$.}
Let us fix $t\in\R_{+}$ and $x\in\R$. We will see (cf Corollary \ref{c.2.8}) that for large $a$, we have
\begin{equation}\label{eq:equiv-tail}
\PP\lp \log( u(t,x))\ge   a\rp
\simeq 
\exp\lp -\frac{\hat{c}_{H,t} \, a^{1+H}}{t^{H}}     \rp,
\end{equation}
where $\hat{c}_{H,t}$ is determined by a variational problem. This stems, via some large deviation arguments, from a sharp analysis of the high moments of $u(t,x)$. Namely, our main effort in order to get the tail behavior will be to prove that for large $m\in\N$, we have (see Theorem~\ref{thm:asymptotic-moments-utx})
\begin{equation}\label{eq:equiv-moments}
\EE\lc \lp u(t,x) \rp^{m}  \rc \simeq 
\exp\lp c_{H} t m^{1+\frac{1}{H}}  \rp,
\end{equation}
with a variational expression for $c_{H}$.
Towards this aim, we resort to a Feynman-Kac representation for the moments of $u(t,x)$, which involves a kind of intersection local time for an $m$-dimensional Brownian motion weighted by a singular potential. We are thus able to relate the quantity $\EE[ ( u(t,x) )^{m}  ]$ to a semi-group on $L^{2}(\R^{m})$, and this semi-group admits a generator $A_{m}$ which can be expressed as the Laplace operator on $\R^{m}$ perturbed by a singular distributional potential. Then we shall get our asymptotic result \eqref{eq:equiv-moments} thanks to a careful spectral analysis of $A_{m}$. The technicalities related to this step are detailed in Sections \ref{sec:optim-prob} and~\ref{sec:moment-asympt}.

\noindent
\textit{(ii) Spatial behavior.}
Once the tail of $\log(u(t,x))$ has been sharply estimated, we can complete the study of the asymptotic behavior in the following way: on the interval $[-M,M]$  for large $M$, we are able to produce some random variables $X_{1},\ldots,X_{\cn}$ such that:
\begin{itemize}
\item
$\cn$ is of order $2M$.
\item 
$X_{1},\ldots,X_{\cn}$ are i.i.d, and satisfy approximately \eqref{eq:equiv-tail}.
\item
$X_{1},\ldots,X_{\cn}$ are approximations of $u(t,x_{1}),\ldots u(t,x_{\cn})$ with $x_{1},\ldots,x_{\cn}\in[-M,M]$.
\item
Fluctuations of $u$ in small boxes around $x_{1},\ldots,x_{\cn}$ are small.
\end{itemize}
With this information in hand, the behavior $(\log R)^{\frac{1}{1+H}}$ in Theorem \ref{thm:space-asymptotics} can be heuristically understood as follows: for an additional parameter $\la$, we have
\begin{equation*}
\PP\lp \max_{j\le \cn} \log( |X_{j}|) \le \la [\log R]^{\frac{1}{1+H}} \rp
=
\lc 1 - \PP\lp  \log(|X_{j}|) > \la [\ln(R)]^{\frac{1}{1+H}} \rp  \rc^{\cn},
\end{equation*}
and thanks to the tail estimate \eqref{eq:equiv-tail}, we obtain
\begin{equation*}
\PP\lp \max_{j\le \cn} \log(|X_{j}|) \le \la [\log R]^{\frac{1}{1+H}} \rp
\simeq
\lc 1 - \exp\lp -\hat{c}_{H,t} \la^{1+H} \log R  \rp  \rc^{\cn}.
\end{equation*}
With some elementary calculus considerations, and playing with the extra parameter $\la$, one can now easily check that for $R$ large enough
\begin{equation*}
\PP\lp \max_{j\le \cn} \log(|X_{j}|) \le \la [\log R]^{\frac{1}{1+H}} \rp
\le \exp(-R^{\nu}),
\end{equation*}
with a positive $\nu$. Otherwise stated, we obtain an exponentially small probability of having $\log(|X_{j}|)$ of order less than $[\log R ]^{\frac{1}{1+H}}$. Using a Borel-Cantelli type argument and the fact that fluctuations of $u$ in small boxes around $x_{1},\ldots,x_{\cn}$ are small, we thus prove Theorem~\ref{thm:space-asymptotics}. 

As already mentioned, the spatial covariance $\ga$ of the noise $\dot{W}$ driving equation~\eqref{eq:spde-pam} is a nonpositive distribution. With respect to smoother cases such as the ones treated in~\cite{Ch3}, this induces some serious additional difficulties which can be summarized as follows. First, the variational asymptotic results involving the generator $A_{m}$ cannot be reduced to a one-dimensional situation due to the singularities of $\ga$. We thus have to handle a family of optimization problems in $L^{2}(\R^{m})$ for arbitrarily large $m$. Then, still in the part concerning the asymptotic behavior of $m\mapsto\EE[u(t,x)]$, the upper bound obtained  in~\cite{Ch3} relied heavily on a  compactfication by folding argument for which the positivity of $\ga$ was essential. 
This approach is no longer applicable here, and we have to replace it by a coarse graining procedure.
Finally, the localization procedure and the study of fluctuations in the spatial behavior step of our proof, though similar in spirit to the one in Conus et al. \cite{CJKS}, is more involved in its implementation. More specifically, in our case the moment estimates cannot be obtained by using sharp  Burkholder inequalities, because of the roughness of the noise. For this reason we use Wiener chaos expansions and hypercontractivity, which are more suitable methods in our context. The fluctuation estimates alluded to above are also obtained through chaos expansions.

 The paper is organized as follows. Section \ref{sec:prelim} contains some preliminaries on stochastic integration with respect to the rough noise $\dot{W}$ and the mild formulation of equation  (\ref{eq:spde-pam}). We introduce the variational quantities and their asymptotic behavior when time is large in Section \ref{sec:optim-prob}. Section \ref{sec:fk-semigroups} deals with Feynman-Kac semigroups and in Section \ref{sec:moment-asympt} we derive the precise moments asymptotics which are required to show Theorem \ref{thm:space-asymptotics}. The proof of Theorem \ref{thm:space-asymptotics} is given in Section \ref{sec:proof}.  A technical lemma is proved in the appendix.

\section{Multiplicative stochastic heat equation}\label{sec:prelim}

This section is devoted to recall the basic existence and uniqueness results for the stochastic equation with rough space-time noise. 

\subsection{Structure of the noise}\label{sec:noise-structure}

Recall that we are considering a Gaussian  field $W$ whose covariance structure is given by \eqref{cov}. As mentioned above, the stochastic integration with respect to $W$ has only been introduced recently in \cite{BJQ,HHLNT}, and we proceed now to a brief review of the results therein.

Let us start by introducing our basic notation on Fourier transforms
of functions. The space of   Schwartz functions is
denoted by $\mathcal{S}$. Its dual, the space of tempered distributions, is $\mathcal{S}'$.  The Fourier
transform of a function $g \in \mathcal{S}$ is defined by
\[ \mathcal{F}g ( \xi)  = \int_{\mathbb{R}} e^{- i
   \xi  x } g ( x) d x, \]
so that the inverse Fourier transform is given by $\mathcal{F}^{- 1} g( \xi)
= ( 2 \pi)^{- 1} \mathcal{F}g ( - \xi)$.

 Let $ \mathcal{D}((0,\infty)\times \R)$ denote the space  of real-valued infinitely differentiable functions with compact support on $(0, \infty) \times \R$.
Taking into account the spectral representation of the covariance function of the fractional Brownian motion in the case $H<\frac 12$
proved in \cite[Theorem 3.1]{PT}, we represent  our noise $W$   by a zero-mean Gaussian family $\{W(\vp), \, \vp\in
\mathcal{D}((0,\infty)\times \R)\}$ defined on a complete probability space
$(\Omega,\cf, \PP)$, whose covariance structure
is given by
\begin{equation}\label{eq:cov1}
\be\lc W(\vp) \, W(\psi) \rc
=  c_{H}\int_{\R_{+}\times\R}
\cf\varphi(s,\xi) \, \overline{\cf\psi(s,\xi)} \, \mu(d\xi) \, ds  ,
\end{equation}
where the Fourier transforms $\cf\varphi,\cf\psi$ are understood as Fourier transforms in space only and  where we have set
\begin{equation}\label{eq:expr-c1H}
c_{H}= \frac 1 {2\pi} \Gamma(2H+1)\sin(\pi H)  ,
\quad\text{and}\quad
\mu(d\xi)= |\xi|^{1-2H} d\xi \,.
\end{equation}
We denote by $\gamma$ the Fourier transform of the measure
$\mu(d\xi)$.   Formally $c_H \gamma(x) \delta_0(s)$ is the covariance function of the generalized noise $\dot{W}$.
However, notice that  $\gamma$ is a generalized function  and  the integral $\gamma(x)=\int_{\R}e^{-i\xi x}\mu(d\xi)
$ is not defined pointwise.  Rather, it is defined as a linear
functional  given by
\[
\int_{\R}\gamma(x)\varphi(x)dx
=\int_{\R}{\mathcal F}\varphi(\xi)\mu(d\xi),
\]
for any $\varphi \in {\mathcal S}(\R)$.
 As a generalized
function, $\gamma$ is non-negative definite in the sense that
$$
\int_{\R\times\R}\gamma(x-y)\varphi(x)\varphi(y)dx dy
=\int_{\R}\vert{\mathcal F}\varphi(\xi)\vert^2\mu(d\xi)
\ge 0,
\hskip.2in \varphi\in {\mathcal S}(\R).
$$
On the other hand, $\gamma (x)$ (or more precisely, its truncated form)
takes negative values somewhere. As mentioned in the introduction, this fact makes the problem of spatial 
asymptotics for equation
\eqref{eq:spde-pam} much harder than in \cite{Ch3}.

Let $\ch$ be the closure of $\mathcal{D}((0,\infty)\times \R)$ under the semi-norm induced by the right-hand side of \eqref{eq:cov1}.    The Gaussian family $W$ can be extended as an isonormal  Gaussian process $W=\{W(\varphi), \varphi\in  \ch\}$  indexed by  the Hilbert space $\ch$. The space $\ch$ can be identified with
the homogenous Sobolev space of order $\frac 12-H$ of functions with values in $L^2(\R_+)$, namely $\ch= \dot{H} ^{\frac 12- H} (L^2(\R_+))$  (see  \cite{BCD} for the definition of $\dot{H} ^{\frac 12- H}$).

Let us close this subsection by the definition of It\^o's type integral in our context, which will play a crucial role in the sequel. We will make use of the notation for any $t\ge  0$ and $\varphi\in \mathcal{S}(\R)$
\begin{equation} \label{notation}
W(t,\varphi)= W(\mathbf{1}_{[0,t]}\varphi).
\end{equation}
For each $t\ge 0$, we denote by $\mathcal{F}_t$ the $\sigma$-field generated by the random variables $\{W(s, \varphi): s\in [0,t], \varphi \in \mathcal{S}(\R)\}$. 
The  following  proposition is borrowed from \cite{HHLNT}.

\begin{proposition}\label{prop:ito-integration}
Let $L^2_a$ be  the space of predictable processes $g$ defined on $\R_{+}\times\R$ such that
almost surely $g\in \ch$ and $\be[\|g\|_{\ch}^{2}]<\infty$. Then, the stochastic integral   $\int_{\mathbb{R}_+}\int_{\mathbb{R}}g(s,x) \, W(ds,dx)$ is well defined for   $g\in L^2_a$. Furthermore, the following isometry property holds true
\begin{eqnarray}\label{int isometry}
\be\lc \lp \int_{\mathbb{R}_+}\int_{\mathbb{R}}g(s,x) \, W(ds,dx) \rp^{2} \rc
&=&
\be \left[ \|g\|_{\ch}^{2}\right]  \\
&=&
c_{H}\int_{\RR_+\times \RR} \be\lln \lc \mathcal{F}g(s,\xi)\rrn^{2} \rc |\xi|^{1-2H }d\xi ds\,. \notag
\end{eqnarray}
\end{proposition}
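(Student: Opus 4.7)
The plan is to construct the stochastic integral by the classical Itô procedure, exploiting the fact that $\dot W$ is white in time so the $\mathcal H$-norm factorizes as an $L^2$ integral in time of a spatial Sobolev norm. The key point that makes a martingale-type construction available, despite the distributional nature of $\gamma$ in space, is that for each fixed $s$ the noise $W(ds,\cdot)$ is independent of $\mathcal F_s$ and has a controlled spatial covariance via \eqref{eq:cov1}.

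First, I would single out the class of elementary processes
\begin{equation*}
g(s,x)=\sum_{i=1}^{n} X_i(\omega)\,\1_{(t_i,t_{i+1}]}(s)\,\varphi_i(x),
\end{equation*}
where $0\le t_1<\cdots<t_{n+1}$, $\varphi_i\in\mathcal S(\R)$, and $X_i$ is bounded and $\mathcal F_{t_i}$-measurable. For such $g$ I define
\begin{equation*}
\int_{\RR_+}\!\int_{\RR} g(s,x)\,W(ds,dx) := \sum_{i=1}^{n} X_i\,\bigl[W(t_{i+1},\varphi_i)-W(t_i,\varphi_i)\bigr],
\end{equation*}
using the notation \eqref{notation}. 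The right-hand side is well defined as a sum of products of bounded $\mathcal F_{t_i}$-measurable random variables with Gaussian increments that are independent of $\mathcal F_{t_i}$. Squaring and taking expectation, the cross terms with $i\neq j$ vanish by independence (only the diagonal survives because $W$ is white in time and the intervals $(t_i,t_{i+1}]$ are disjoint), giving
\begin{equation*}
\be\!\left[\Bigl(\int g\,dW\Bigr)^{\!2}\right]
=\sum_{i=1}^{n}\be[X_i^{2}]\,(t_{i+1}-t_i)\,c_H\!\int_{\RR}|\cf\varphi_i(\xi)|^{2}\mu(d\xi),
\end{equation*}
which is precisely $\be[\|g\|_{\ch}^{2}]$ and, by Plancherel, the right-hand side of \eqref{int isometry}. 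This establishes the isometry on elementary processes.

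The second step is a density argument: I would show that the elementary processes just described are dense in $L^2_a$ with respect to the norm $\|g\|_{L^2_a}^{2}:=\be[\|g\|_{\ch}^{2}]$. Since $\mathcal H$ is identified with $\dot H^{1/2-H}(L^2(\RR_+))$, this reduces to approximating a predictable $\mathcal H$-valued process in $L^{2}(\Omega;L^2(\RR_+;\dot H^{1/2-H}))$ by step processes in time whose spatial marginals are smooth Schwartz functions; one first truncates in time and in a suitable spatial frequency window, mollifies in space by convolution against a Schwartz kernel, and then discretizes in time using the predictability to guarantee that each coefficient $X_i$ is $\mathcal F_{t_i}$-measurable. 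The isometry of the previous step lets me extend the integral to all of $L^2_a$ by taking $L^2(\Omega)$-limits along Cauchy sequences of elementary approximants, and the isometry \eqref{int isometry} passes to the limit.

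The main obstacle is the density/approximation step: because the spatial norm is that of the homogeneous Sobolev space $\dot H^{1/2-H}$ rather than $L^{2}$, I must verify that the standard mollification–truncation–time-discretization procedure (a) converges in this negative-regularity norm and (b) is compatible with predictability, so that the time-discretized approximants still have $\mathcal F_{t_i}$-measurable coefficients. Once this is settled, the stochastic integral and the isometry \eqref{int isometry} follow by the usual Itô completion argument, exactly as in the construction of the Walsh integral adapted to the present covariance.
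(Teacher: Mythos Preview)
The paper does not actually prove this proposition: it is stated as ``borrowed from \cite{HHLNT}'' and no argument is given in the present article. Your outline is the standard It\^o--Walsh construction adapted to the covariance \eqref{eq:cov1}, and it is essentially what one finds in \cite{HHLNT}: define the integral on elementary predictable processes, verify the isometry there using the white-in-time structure and \eqref{eq:cov1}, and extend by density. The computation of the diagonal and off-diagonal terms is correct, and your identification of the only nontrivial technical point---density of elementary processes in $L^2_a$ for the $\dot H^{1/2-H}$-valued $L^2$ norm, compatibly with predictability---is accurate; this is handled in \cite{HHLNT} by exactly the mollification/truncation/time-discretization scheme you describe.
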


\subsection{Stochastic heat equation with rough multiplicative noise}

Recall that we are considering equation \eqref{eq:spde-pam} driven by the noise described in Section  \ref{sec:noise-structure}. For the sake of simplicity, we shall moreover choose $u(0,\cdot)=\1$ as the initial condition.

\begin{definition}\label{def-sol-sigma}
Let $u=\{u(t,x),  t\ge 0, x \in \mathbb{R}\}$ be a real-valued predictable stochastic process. Assume that for all $t\ge 0$ and $x\in\R$ the process $\{p_{t-s}(x-y)u(s,y) \1_{[0,t]}(s), 0 \leq s \leq t, y \in \mathbb{R}\}$ is an element of $L^2_a$, where $p_t(x)$ is the heat kernel on the real line related to $\frac{1}{2}\Delta$ and $L^2_a$ is defined in Proposition \ref{prop:ito-integration}. We say that $u$ is a mild solution of \eqref{eq:spde-pam} if for all $t \ge 0$ and $x\in \mathbb{R}$ we have
\begin{equation}\label{eq:mild-time-dep}
u(t,x)=1+ \int_0^t \int_{\mathbb{R}}p_{t-s}(x-y)u(s,y) W(ds,dy) \quad a.s.,
\end{equation}
where the stochastic integral is understood in the It\^o sense of Proposition \ref{prop:ito-integration}.
\end{definition}

Let $\{B_{j},\, j\ge 1\}$ be a collection of independent standard Brownian motions, all independent of $W$. For all $t\ge 0$ and $j<k$, we can define (see \cite{HHLNT} again) the  functional
\begin{equation} \label{y1}
V_{j,k}(t):= \int_0^t   \gamma(B_j(s)- B_k(s) )ds,
\end{equation}
which is  interpreted as  the following  limit in
  $L^2(\Omega)$
\[
V_{j,k}(t) =\lim_{\varepsilon \rightarrow 0} \int_0^t  \int_{\R}  e^{-\varepsilon |\xi|^2}   e^{ i\xi  (B_j(s)- B_k(s))}
\mu(d\xi) ds.
\]
With these notations in mind, let us quote an existence and uniqueness result for our equation of interest.

\begin{proposition}\label{prop:exist-unique-FK}
There is a unique nonnegative mild solution $u$ to  equation \eqref{eq:mild-time-dep}, understood as in Definition \ref{def-sol-sigma}. Moreover, recalling our notation \eqref{y1} above, we have:

\noindent
\emph{(i)}
The following Feynman-Kac formula for the moments of $u$ holds true for $m\ge 2$
\begin{equation}  \label{y3}
\be  \lc \lp u(t,x) \rp^{m}  \rc= \be_{x} \left[ \exp\left(  c_{H}  Q_m(t)\right)\right],
\quad\text{with}\quad
Q_m(t)=  \sum_{1\le j <k \le m} V_{j,k}(t),
\end{equation}
where    $\{B_j, j=1,\dots, m\}$ is a family of independent standard Brownian motions starting from $x\in\R$, and $\be_{x}$ denotes the expected value with respect to the Wiener measure shifted by $x$.

\noindent
\emph{(ii)} For any $m\ge 1$ and any $\al>0$ there exist some constants $c_1$ and $c_2$ such that
\begin{equation}\label{eq:ineq-moments-u}
 \be  \lc \exp\lp \al Q_{m}(t) \rp  \rc
 \le c_{3} \exp\left( c_{4} m^{1+\frac 1H} t \right).
\end{equation}
In particular, for $t\ge 0$ and $x\in\R$ we have
\begin{equation*}
 \be  \lc \lp u(t,x) \rp^{m}  \rc
 \le c_{5} \exp\left( c_{6} m^{1+\frac 1H} t \right).
\end{equation*}

\end{proposition}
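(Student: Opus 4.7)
The plan is to dispatch the three claims in order: existence/uniqueness, the Feynman-Kac moment formula~(i), and the exponential moment bound~(ii). Since the rough noise SHE theory is essentially developed in~\cite{HHLNT,BJQ}, I will be brief on the first two and focus on~(ii). For existence-uniqueness, I would run a Picard iteration on $L^2_a$: starting from $u^{(0)}\equiv 1$, set $u^{(n+1)}(t,x)=1+\int_0^t\!\int_\R p_{t-s}(x-y)u^{(n)}(s,y)W(ds,dy)$. Applying the isometry of Proposition~\ref{prop:ito-integration} on the Fourier side and using the Gaussian decay of $\cf p_{t-s}(\xi)=e^{-(t-s)\xi^2/2}$ damps the $|\xi|^{1-2H}$ singularity precisely because $H>1/4$; a Gronwall argument then gives contractivity and the unique mild solution. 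Positivity is not visible from the iteration; I would recover it via mollification and the classical positive Feynman-Kac representation below, then pass to the limit.

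For part~(i), I would regularize $W$ in space by a bump $\rho_\epsilon$, so that its covariance becomes the genuine function $\gamma_\epsilon=\rho_\epsilon*\gamma*\rho_\epsilon$. The mollified equation admits a classical Feynman-Kac representation of the form
\begin{equation*}
u^\epsilon(t,x)=\be_x\!\left[\exp\!\left(\int_0^t\!\!\int_\R \rho_\epsilon(B_{t-s}-y)\,W(ds,dy)-\frac{c_H t\,\gamma_\epsilon(0)}{2}\right)\right],
\end{equation*}
and raising to the $m$th power with $m$ independent Brownian motions, then taking the Gaussian expectation in $W$, produces
\begin{equation*}
\be[(u^\epsilon(t,x))^m]=\be_x\!\left[\exp\!\left(c_H\!\sum_{1\le j<k\le m}\int_0^t\gamma_\epsilon(B_j(s)-B_k(s))\,ds\right)\right],
\end{equation*}
the diagonal $j=k$ contributions to the variance cancelling the It\^o correction $\tfrac{c_H t\,\gamma_\epsilon(0)}{2}$. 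Passing to the limit $\epsilon\to 0$ uses $u^\epsilon\to u$ in $L^m$ from the Picard scheme, the $L^2$ convergence $V^\epsilon_{j,k}(t)\to V_{j,k}(t)$ of~\cite{HHLNT}, and the uniform exponential integrability supplied by~(ii).

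For~(ii) I would first exploit Brownian scaling: since $\gamma$ is homogeneous of degree $2H-2$, the substitution $s=tr$ combined with $B(tr)\stackrel{d}{=}\sqrt{t}\,\tilde B(r)$ yields the joint identity in law $\{V_{j,k}(t)\}_{j<k}\stackrel{d}{=}\{t^H V_{j,k}(1)\}_{j<k}$, hence $Q_m(t)\stackrel{d}{=}t^H Q_m(1)$. It therefore suffices to establish, uniformly in $m\ge 1$ and $\beta\ge 1$,
\begin{equation*}
\be\!\left[\exp(\beta Q_m(1))\right]\le C\exp\!\left(C'\beta^{1/H}m^{1+1/H}\right),
\end{equation*}
since setting $\beta=\alpha t^H$ (together with a trivial bound when $\alpha t^H\le 1$) gives the claim. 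The natural route, and the one developed in the later sections of the paper, is to reinterpret $\be[\exp(\alpha Q_m(t))]$ as the Feynman-Kac semigroup $e^{tA_m}\mathbf{1}$ with generator $A_m=\tfrac{1}{2}\Delta_{\R^m}+\alpha\sum_{j<k}\gamma(x_j-x_k)$ on $L^2(\R^m)$, and to extract the $m^{1+1/H}$ exponent from a variational analysis of the principal eigenvalue. The moment bound on $u(t,x)$ then follows from~(i) with $\alpha=c_H$.

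The main obstacle is this sharp $m^{1+1/H}$ scaling. Because $\gamma$ is a distribution taking negative values, no termwise bound on the $\binom{m}{2}$ pairwise contributions is available, and the correct $m$ dependence must come from a genuinely collective mechanism: heuristically, $m$ Brownian motions concentrate in a common ball of radius $r$, contributing $\sim m^2 r^{2H-2}$ to the exponent of $Q_m(1)$ against an entropic cost $\sim m/r^2$, and optimizing in $r$ yields exactly the exponent $m^{1+1/H}$. Making this rigorous uniformly in $m$ and $\beta$ is precisely the task carried out in Sections~\ref{sec:optim-prob}--\ref{sec:moment-asympt}.
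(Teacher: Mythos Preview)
The paper does not actually prove this proposition: it is quoted as a known result from~\cite{HHLNT} (note the sentence ``Let us quote an existence and uniqueness result\ldots'' preceding the statement, and the absence of any proof afterwards). Your sketch of existence--uniqueness via Picard iteration and of part~(i) via mollification and the Gaussian expectation is in line with the arguments of~\cite{HHLNT}, so those parts are fine as an outline.

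Your treatment of part~(ii), however, is circular. You propose to obtain the crude moment bound~\eqref{eq:ineq-moments-u} by interpreting $\be[\exp(\alpha Q_m(t))]$ through the Feynman--Kac semigroup and then invoking the variational/spectral analysis of Sections~\ref{sec:optim-prob}--\ref{sec:moment-asympt}. But that analysis \emph{presupposes}~(ii): the proof of Proposition~\ref{prop:fk-self-adjoint} (boundedness and self-adjointness of $T_{m,t}$, on which the whole spectral machinery rests) explicitly applies relation~\eqref{eq:ineq-moments-u} after Cauchy--Schwarz. Likewise, the passage to the limit $\varepsilon\to 0$ in your argument for~(i) needs the uniform exponential integrability that you are deferring to~(ii). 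So as written your argument for~(ii) chases its own tail.

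The bound~(ii) in~\cite{HHLNT} is obtained by direct, elementary means that do not touch the spectral picture: one expands $\exp(\alpha Q_m(t))$ (or, equivalently, works with the Wiener chaos expansion of $u$), estimates the $n$th moment $\be[Q_m(t)^n]$ by explicit Gaussian integrals over the simplex with the measure $|\xi|^{1-2H}d\xi$, and sums. This is the same circle of estimates that appears later in this paper in the proof of Proposition~\ref{thm:exist-uniq-chaos} (see the bounds leading to~\eqref{h11} and the use of Lemma~\ref{lem:intg-simplex}). Your Brownian scaling observation $Q_m(t)\stackrel{d}{=}t^H Q_m(1)$ is correct and useful, but after that reduction you still need a self-contained estimate at $t=1$; the later sections cannot supply it.
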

One of the main steps in our estimates will be to obtain a sharp refinement of inequality~\eqref{eq:ineq-moments-u}.

\section{Preliminaries on Dirichlet forms and semigroups}\label{sec:optim-prob}

As mentioned in the Introduction, the Dirichlet form and the semigroup related to a certain operator $A_{m}$ will play a prominent role in our analysis of the spatial behavior of $u$. The current section defines an analyzes these objects.

\subsection{Variational quantities}\label{sec:var-qty}

We will see in Section \ref{sec:moment-asympt} that our sharp asymptotic estimates rely on an optimization problem for some variational quantities related to the coefficients of equation \eqref{eq:spde-pam}. We now derive some analytic properties of those quantities.

\subsubsection{A variational form on $\R$}

Let us consider the following general problem: let $\ck_{1}$ be the space defined by
\begin{equation}\label{eq:def-K1}
{\mathcal K}_{1}=\big\{g\in{L}^2(\R): \hskip.1in \|g\|_2=1\hskip.05in
\hbox{and}\hskip.05in g'\in{L}^2(\R)\big\}.
\end{equation}
Next, for a given parameter $\te>0$ and $g\in\ck_{1}$ set
\begin{equation}\label{eq:def-H-theta}
H_{\te}(g)
=
\theta \, \int_\R  \lln \cf g^{2}(\xi) \rrn^{2} \vert \xi\vert^{1-2H}d\xi
-\frac{1}{2}\int_{\R}\vert g'(x)\vert^2 dx .
\end{equation}
We are interested in optimizing this kind of variational quantity, and here is a first result in this direction.

\begin{proposition}\label{lem:max-E-theta}
For $\theta>0$  and $g\in\ck_{1}$ consider the variational quantity $H_{\te}(g)$ defined by~\eqref{eq:def-H-theta} and set
\begin{equation}\label{eq:def-e-theta}
\ce_{\theta} := \sup\lcl  H_{\te}(g) :\,  g\in {\mathcal K}_{1} \rcl.
\end{equation}
Then the following holds true:

\noindent
\emph{(i)} The quantity $\ce_{\theta}$ is finite for any $\theta>0$.

\noindent
\emph{(ii)} Setting $\ce = \ce_1$, we have
  $\mathcal{E}_\theta = \theta^{\frac 1H} \mathcal{E}$.
\end{proposition}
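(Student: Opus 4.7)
The two statements have quite different flavors: part (ii) is a pure scaling identity, while part (i) requires a genuine interpolation/Sobolev-type estimate to control the Fourier term by $\|g'\|_2$. I would therefore handle (ii) first to make clear what scale is relevant, and then focus the real work on (i).

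For part (ii), the plan is to exploit the natural dilation on $\mathcal{K}_1$: for $\lambda>0$ set $g_\lambda(x)=\lambda^{1/2}g(\lambda x)$. This map is a bijection of $\mathcal{K}_1$ onto itself since $\|g_\lambda\|_2=\|g\|_2$. A direct change of variables shows
\begin{equation*}
\int_\R |g_\lambda'(x)|^2\,dx=\lambda^2\int_\R |g'(x)|^2\,dx,
\qquad
\int_\R |\cf g_\lambda^2(\xi)|^2|\xi|^{1-2H}\,d\xi=\lambda^{2-2H}\int_\R |\cf g^2(\xi)|^2|\xi|^{1-2H}\,d\xi,
\end{equation*}
so that $H_\theta(g_\lambda)=\theta\lambda^{2-2H}F(g)-\tfrac12\lambda^2 G(g)$, where I abbreviate $F(g)=\int|\cf g^2|^2|\xi|^{1-2H}d\xi$ and $G(g)=\|g'\|_2^2$. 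Choosing $\lambda=\theta^{1/(2H)}$ makes both monomials in $\lambda$ produce the same factor $\theta^{1/H}$, so $H_\theta(g_\lambda)=\theta^{1/H}H_1(g)$. Taking the supremum over $g\in\mathcal{K}_1$ (and using the bijectivity of $g\mapsto g_\lambda$) then yields $\mathcal{E}_\theta=\theta^{1/H}\mathcal{E}$.

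For part (i), the strategy is to estimate $F(g)$ by a sub-quadratic power of $G(g)$, so that $H_\theta(g)=\theta F(g)-\tfrac12 G(g)$ is bounded above on $\mathcal{K}_1$. I would rewrite the Fourier integral as a Gagliardo seminorm,
\begin{equation*}
F(g)=c\iint_{\R^2}\frac{(g^2(x)-g^2(y))^2}{|x-y|^{2-2H}}\,dx\,dy
\end{equation*}
(valid since $s:=\tfrac12-H\in(0,\tfrac14)\subset(0,1)$), and factor $g^2(x)-g^2(y)=(g(x)-g(y))(g(x)+g(y))$ to obtain
\begin{equation*}
F(g)\le 4\,\|g\|_\infty^2\iint\frac{(g(x)-g(y))^2}{|x-y|^{2-2H}}\,dx\,dy
= C\,\|g\|_\infty^2\,\|g\|_{\dot H^{1/2-H}}^2 .
\end{equation*}
Two standard one-dimensional estimates then finish the job: the Sobolev embedding $\|g\|_\infty\le \|g\|_2^{1/2}\|g'\|_2^{1/2}$ and the interpolation $\|g\|_{\dot H^{1/2-H}}^2\le \|g\|_2^{1+2H}\|g'\|_2^{1-2H}$. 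Combined with the normalization $\|g\|_2=1$, they give $F(g)\le C\, G(g)^{1-H}$, and hence $H_\theta(g)\le C\theta\, G(g)^{1-H}-\tfrac12 G(g)$. Since $1-H<1$, the function $t\mapsto C\theta\, t^{1-H}-\tfrac12 t$ on $[0,\infty)$ is bounded above by an explicit constant depending only on $\theta$ and $H$, which yields $\mathcal{E}_\theta<\infty$.

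I expect the scaling argument for (ii) to be essentially bookkeeping; the main obstacle is getting the right interpolation in (i). Specifically, one has to be sure that the Gagliardo representation of $F$ is available (this needs $1/2-H\in(0,1)$, which is precisely the regime assumed) and that splitting $g^2(x)-g^2(y)$ produces a usable $\|g\|_\infty$ factor controlled through Sobolev embedding. Once these ingredients are in place, the conclusion is immediate from elementary calculus on the one-variable upper bound.
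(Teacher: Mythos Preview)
Your proof of (ii) is identical to the paper's: the same dilation $g\mapsto g_\lambda$ with $\lambda=\theta^{1/(2H)}$.

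For (i), your argument is correct but follows a genuinely different route from the paper. The paper stays entirely on the Fourier side: it uses the two pointwise bounds $|\cf g^2(\xi)|\le 1$ and $|\cf g^2(\xi)|\le 2|\xi|^{-1}\|g'\|_2$ (the latter from a single integration by parts and Cauchy--Schwarz), splits the integral at $|\xi|=R$, and chooses $R$ so that the high-frequency contribution $4\theta\|g'\|_2^2/(HR^{2H})$ is absorbed by the Dirichlet term. You instead pass to real variables via the Gagliardo seminorm of $g^2$, factor $g^2(x)-g^2(y)$, and finish with the 1D embedding $\|g\|_\infty^2\le 2\|g\|_2\|g'\|_2$ together with the Fourier interpolation $\|g\|_{\dot H^{1/2-H}}^2\le C\|g\|_2^{1+2H}\|g'\|_2^{1-2H}$. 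Your approach delivers the scale-sharp estimate $F(g)\le C\,G(g)^{1-H}$, which makes the homogeneity of (ii) transparent; the paper's argument is more elementary (no fractional Sobolev machinery) and gives a completely explicit bound $\ce_\theta\le \theta R_\theta^{2-2H}/(1-H)$ for a concrete $R_\theta$. Both work for the full range $H\in(0,\tfrac12)$.
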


\begin{proof}
Let us first focus on item (i).
For any $g\in\ck_{1}$ and $\xi\in\R$ we have
$$
\lln \cf g^{2}(\xi) \rrn
=
\bigg\vert\int_\R e^{-i\xi x}g^2(x)
dx\bigg\vert
\le
\int_\R  \lln g^2(x) \rrn
dx
=
1.
$$
In addition, an elementary integration by parts argument shows that
$$
\int_\R e^{-i\xi x}g^2(x) \, dx
=
- i \int_\R\left(\frac{1}{\xi}\, \frac{dg^2}{ dx}\right)
e^{- i \xi x} \, dx.
$$
Hence, for any $\xi\in\R$ and $g\in\ck_{1}$ we get
$$
\lln \cf g^{2}(\xi) \rrn
\le
\vert\xi\vert^{-1}\int_\R\Big\vert{dg^2\over dx}(x)\Big\vert dx
=2\vert\xi\vert^{-1}\int_\R\vert g(x)\vert \vert g'(x)\vert dx
\le 2\vert\xi\vert^{-1}\|g'\|_2,
$$
where the last relation is due to Cauchy-Schwarz'  inequality plus the fact that $\|g\|_{L^{2}}=1$ for $g\in\ck_{1}$.
Consider now an additional parameter $R>0$. Gathering the two bounds we have obtained for  $\lln \cf g^{2}(\xi) \rrn$, we end up with
\begin{equation*}
\int_\R \lln \cf g^{2}(\xi) \rrn^{2} \, \vert\xi\vert^{1-2H}d\xi
\le\int_{-R}^R\vert \xi\vert^{1-2H}d\xi
+4\|g'\|_2^2\int_{\{\vert \xi\vert\ge R\}}\vert\xi\vert^{-(1+2H)}d\xi 
=\frac{R^{2-2H}}{1-H}+ \frac{4 \|g'\|_2^2}{H \, R^{2H}}.
\end{equation*}
We thus take $R=R_{\te}$ large enough, such that $\frac{4 \, \te}{H \, R^{2H}}\le\frac12$. We get
$$
\theta\, \int_\R \lln \cf g^{2}(\xi) \rrn \, \vert\xi\vert^{1-2H}d\xi
-\frac{1}{ 2}\int_{\R}\vert g'(x)\vert^2dx
\le \frac{\theta \, R^{2-2H}}{ 1-H}.
$$
Since the quantity $R_{\te}$ does not depend on $g$, the above inequality is valid for all $g\in{\mathcal K}_{1}$. The proof of item (i) is thus finished.

In order to check item (ii), consider a parameter $a>0$, and for $g\in\ck_{1}$ set $g_{a}(x)=a^{1/2}g(ax)$. It is readily checked that $g_{a}\in\ck_{1}$ whenever $g\in\ck_{1}$. In addition, we have 
\begin{equation*}
\cf g_{a}^{2}(\xi)= \cf g^{2}\!\lp \frac{\xi}{a} \rp ,
\quad\text{and}\quad
g_{a}'(x) = a^{3/2} g'(ax).
\end{equation*}
Plugging this information into the definition \eqref{eq:def-H-theta}, we obtain
\begin{equation*}
H_{\te}(g_{a})
=
\theta a^{2-2H} \, \int_\R  \lln \cf g^{2}(\xi) \rrn^{2} \vert \xi\vert^{1-2H}d\xi
-\frac{a^{2}}{2}\int_{\R}\vert g'(x)\vert^2 dx .
\end{equation*}
We now choose $a=\te^{\frac{1}{2H}}$, which yields $H_{\te}(g_{a})=\te^{\frac{1}{H}}H_{1}(g)$. Our point (ii) is now easily derived. 
\end{proof}

\begin{remark}
Notice that  Proposition  \ref{lem:max-E-theta} holds for any $H\in (0,\frac 12)$.
\end{remark}

\begin{remark}
The variational quantity $H_{\te}$ is defined in \eqref{eq:def-H-theta} appealing to Fourier coordinates. We could also have tried to introduce it in direct coordinates as
\begin{equation}\label{eq:variation-blow}
\hat{H}_{\te}(g) = 
\int_{\R\times\R}
{g^2(x)g^2(y)\over\vert x-y\vert^{2-2H}} \, dxdy
-\frac12\int_{\R}\vert g'(x)\vert^2dx.
\end{equation}
However, this quantity blows up for a broad class of functions in $\ck_{1}$. Indeed, for $g\in\ck_{1}$ such that $g\ge \al>0$ on a neighborhood of 0, we have
$$
\int_{\R\times\R}
\frac{g^2(x)g^2(y)}{\vert x-y\vert^{2-2H}} \, dxdy
\ge  \al^{4} \int_{\{\vert x\vert\le\varepsilon \}\times\{\vert y\vert\le\varepsilon \}}
\frac{1}{\vert x-y\vert^{2-2H}} \, dxdy=\infty .
$$
\end{remark}

\subsubsection{A variational form on $\R^{m}$}

Fix $m\ge 2$. Our future computations also rely on the following variational quantity on $\R^{m}$: 
\begin{equation}\label{eq:def-K-theta-m}
K_{\te,m}(g) =
\frac{\theta}{m}\sum_{1\le j<k\le m}
\int_{\R^m}\gamma(x_j-x_k)g^2(x)dx
-\frac{1}{2}\int_{\R^m}\vert \nabla g(x)\vert^2
dx.
\end{equation}
Notice that $K_{\te,m}$ can also be interpreted as a Dirichlet form related to a  Schr\"odinger type generator $A_{\te,m}$, that is,
\begin{equation}\label{eq:def-H-m}
K_{\te,m}(g) = \lla A_{\te,m} g, \, g\rra_{L^{2}(\R^{m})},
\quad\text{with}\quad
A_{\te,m}=\frac{1}{2}\Delta + \frac{\theta}{m} \sum_{1\le j<k\le m}\gamma(x_j-x_k).
\end{equation}
Observe, however, that $K_{\te,m}$ and $A_{\te,m}$ are only defined for smooth test functions, due to the fact that $\ga$ is a distribution. 

\begin{remark}
The quantity $K_{\te,m}(g)$ can also be expressed in Fourier modes. Indeed, the inverse Fourier transform of $x\in\R^{m}\mapsto \ga(x_{j}-x_{k})$ is given by
\begin{equation*}
\vp_{jk}(\xi) = |\xi_{j}|^{1/2-H} \, \delta_{0}(\xi_{j}+\xi_{k}) \, \prod_{l\neq j,k} \delta_{0}(\xi_{l}).
\end{equation*}
Hence for $j<k$ we end up with
\begin{equation*}
\int_{\R^{m}} \ga(x_{j}-x_{k}) \, g^{2}(x) \, dx
=
\int_{\R^{m}} |\la|^{1-2H} \hat{g}_{jk}(\la) \, d\la,
\end{equation*}
where we define 
\begin{equation}\label{eq:def-gjk}
\hat{g}_{jk}(\la)= \cf g^{2}(0,\ldots,0,\overbrace{\la}^{j},0,\ldots,0,\overbrace{-\la}^{k},0,\ldots,0) .
\end{equation}
Summarizing, we have obtained
\begin{equation}\label{eq:K-theta-m-fourier-mode}
 K_{\te,m}(g)
=
-\frac{1}{2(2\pi)^{m}}\int_{\R^{m}} |\xi|^{2} \, \lln \cf g(\xi)\rrn^{2} \, d\xi
+ \frac{\te}{m} \sum_{1\le j < k \le m} \int_{\R} |\la|^{1-2H} \hat{g}_{jk}(\la) \, d\la.
\end{equation}

\end{remark}

\subsection{Asymptotic results for principal eigenvalues}
With those preliminary notions in hand, we now relate the principal eigenvalue of $A_{m,\te}$ with the quantity $\ce_{\te}$ following the methodology introduced in  \cite{ChPh}.

\begin{proposition}\label{prop1}  
Consider $\theta>0$ and the quantity $K_{\te,m}(g)$ given by \eqref{eq:def-K-theta-m}. Define the set $\ck_{m}$ (which is the equivalent of $\ck_{1}$ for functions defined on $\R^{m}$) as follows
\begin{equation}\label{eq:def-Km}
{\mathcal K}_m=\big\{g\in L^2(\R^m):\hskip.1in \|g\|_2=1
\hskip.05in\hbox{and}\hskip.05in  \nabla g \in L^2(\R^m)\big\}.
\end{equation}
We define the principal eigenvalue of the operator $A_{\te,m}$by
\begin{equation}\label{eq:def-principal-eigen}
\la_{\te,m}=
\sup \lcl K_{\te,m}(g);\, g\in {\mathcal  K}_{m}\rcl.
\end{equation}
Then the following asymptotic behavior holds true
\begin{equation*}
\lim_{m\to\infty}\frac{\la_{\te,m}}{m}
=\ce_{ \te / 2}
= \lp  \frac{\te}{2}\rp^{\frac{1}{H}} \ce ,
\end{equation*}
where we recall that $\ce_{\te}$ is defined by \eqref{eq:def-e-theta}.
\end{proposition}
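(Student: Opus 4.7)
My plan is to establish the matching inequalities $\liminf_m \lambda_{\theta,m}/m \ge \ce_{\theta/2}$ and $\limsup_m \lambda_{\theta,m}/m \le \ce_{\theta/2}$, following the scheme of \cite{ChPh}. Observe first that the dilation $g_a(x) = a^{m/2}g(ax)$ on $\ck_m$ preserves the $L^2$ norm and satisfies $K_{\theta,m}(g_a) = a^2 K_{a^{-2H}\theta,m}(g)$, which in analogy with Proposition \ref{lem:max-E-theta}(ii) yields $\lambda_{\theta,m} = \theta^{1/H}\lambda_{1,m}$; combined with $\ce_{\theta/2} = (\theta/2)^{1/H}\ce$ this already reduces the claim to a single value of $\theta$, but I will keep $\theta$ free below.

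For the lower bound I would feed the product test function $g_m(x_1,\ldots,x_m) = \prod_{i=1}^m f(x_i) \in \ck_m$ (with arbitrary $f\in\ck_1$) into $K_{\theta,m}$. Direct computation gives $\int|\nabla g_m|^2\, dx = m\int|f'|^2\, dx$, while translation invariance of each pair and the defining identity $\int\gamma\,\varphi\,dx = \int\cf\varphi\,d\mu$ give $\int\gamma(x_j-x_k)g_m^2\,dx = \int|\cf f^2(\xi)|^2 |\xi|^{1-2H}d\xi$ for every $j<k$. Since $\frac{\theta}{m}\binom{m}{2}=\frac{\theta(m-1)}{2}$, one gets
\begin{equation*}
\frac{K_{\theta,m}(g_m)}{m} = \frac{\theta(m-1)}{2m}\int_\R |\cf f^2(\xi)|^2 |\xi|^{1-2H}d\xi - \frac{1}{2}\int_\R |f'|^2\, dx \;\xrightarrow{m\to\infty}\; H_{\theta/2}(f),
\end{equation*}
and optimizing over $f\in\ck_1$ yields $\liminf_m \lambda_{\theta,m}/m \ge \ce_{\theta/2}$.

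The upper bound is where the work lies. Since $A_{\theta,m}$ commutes with permutations of $(x_1,\ldots,x_m)$ and the sphere $\|g\|_2=1$ is invariant, the supremum in \eqref{eq:def-principal-eigen} can be restricted to nonnegative symmetric $g\in\ck_m$, for which all $\binom{m}{2}$ pairs contribute equally and $K_{\theta,m}(g) = \frac{\theta(m-1)}{2}\int\gamma(x_1-x_2)g^2\,dx - \frac{1}{2}\int|\nabla g|^2\, dx$. Writing $\rho(y) := \int_{\R^{m-1}}g^2(y,x_2,\ldots,x_m)dx_2\cdots dx_m$ for the one-body marginal and $f := \sqrt{\rho} \in \ck_1$, the Hoffmann--Ostenhof inequality $\int|\partial_{x_1}g|^2\, dx \ge \int|(\sqrt\rho)'|^2\, dx$, symmetrized over $j$, gives $\int|\nabla g|^2\, dx \ge m\int|f'|^2\, dx$ and controls the kinetic part. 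The decisive step is then to prove
\begin{equation*}
\int_{\R^m}\gamma(x_1-x_2)g^2(x)dx \le \int_\R |\cf f^2(\xi)|^2 |\xi|^{1-2H}d\xi + o(1) \quad\text{as } m\to\infty,
\end{equation*}
which together with the kinetic bound would give $K_{\theta,m}(g)/m \le H_{\theta/2}(f) + o(1) \le \ce_{\theta/2} + o(1)$.

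The main obstacle is precisely the displayed interaction estimate: because $\gamma$ is a non-positive distribution, there is no direct pointwise comparison between the two-body marginal of $g^2$ and the Hartree product $\rho\otimes\rho$, and the folding/compactification argument of \cite{Ch3} is unavailable. My plan is to run the comparison entirely in Fourier coordinates via the representation \eqref{eq:K-theta-m-fourier-mode}, where the positive spectral measure $|\la|^{1-2H}d\la$ replaces the distributional $\gamma$, and to combine this with a mean-field (de~Finetti type) reduction or coarse-graining on the symmetric $m$-body state, showing that the defect in factorizing the two-body marginal contributes $o(1)$ per particle once the Fourier mass is appropriately controlled. This is the place where a genuinely new ingredient over \cite{Ch3} is needed, as already flagged in the Introduction.
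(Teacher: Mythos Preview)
Your lower bound via product states matches the paper exactly and is correct.

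The upper bound, however, has a genuine gap. You correctly identify the obstacle: the distributional, sign-indefinite nature of $\gamma$ blocks any direct comparison between the two-body marginal of $g^2$ and the Hartree product $\rho\otimes\rho$. But your proposed cure---a de~Finetti/coarse-graining argument in Fourier coordinates showing the factorization defect is $o(1)$ per particle---is only a plan, not a proof. Quantitative de~Finetti theorems of this type require a priori control on the two-body marginal in a norm strong enough to test against the singular weight $|\lambda|^{1-2H}$, and you have not indicated how to obtain such control uniformly over symmetric $g\in\ck_m$. As stated, the displayed inequality $\int\gamma(x_1-x_2)g^2\,dx \le \int|\cf f^2|^2|\xi|^{1-2H}d\xi + o(1)$ is precisely the hard statement you would need to prove, and nothing in the proposal establishes it.

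The paper sidesteps this factorization problem entirely by a frequency cutoff. One writes $\gamma = \gamma_M^1 + \gamma_M^2$ with $\gamma_M^1$ the inverse Fourier transform of $|\xi|^{1-2H}\mathbf{1}_{[-M,M]}$, and splits the Dirichlet form accordingly (with a small $\delta$ fraction of the kinetic energy reserved for the tail). The low-frequency piece $\gamma_M^1$ is a bounded smooth function, so the result of \cite{ChPh} applies directly and gives the limit $\ce_{\theta/2}$ after $M\to\infty$. For the high-frequency tail $\gamma_M^2$ one does use a Hoffmann--Ostenhof reduction to a one-dimensional problem, but the point is that one only needs this one-body quantity to be \emph{small} as $M\to\infty$, not to match the Hartree energy. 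That smallness is obtained by an explicit Fourier estimate, and this is precisely where the hypothesis $H>\tfrac14$ enters (through the integrability of $|\xi|^{-4H}$ at infinity). Your route never isolates a step where $H>\tfrac14$ is used, which is a sign that the argument is incomplete.
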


\begin{proof}
We mostly focus on the upper bound, the lower bound being easier to obtain. To this aim we divide the proof in several steps.

\noindent
{\it Step 1: Cutoff procedure.}
Observe that the results in \cite{ChPh}  only hold for a pointwise defined function $\gamma(x)$.
For this reason we introduce the decomposition
\begin{equation}\label{eq:def-gaM1-gaM2}
\gamma(x)= \int_{-M}^Me^{-i\xi x}\mu(d\xi)
+ \int_{\R\setminus [-M, M]}e^{-i\xi x}\mu(d\xi)
=\gamma_{M}^{1}(x)+\gamma_{M}^{2}(x),
\end{equation}
where the above identity (namely the second term $\gamma_{M}^{2}(x)$)
is understood in the distribution sense. Also notice that for $j=1,2$, the function $\gamma_{M}^{j}$ can be seen as the Fourier transform of the measure $\mu_{M}^{j}$, where $\mu_{M}^{1}$ and $\mu_{M}^{2}$ are defined as follows:
\begin{equation}\label{eq:def-mu1-mu2}
\mu_{M}^{1}(d\xi) = \vert\xi\vert^{1-2H} \1_{[-M,M]}(\xi) \, d\xi,
\quad\text{and}\quad
\mu_{M}^{2}(d\xi) = \vert\xi\vert^{1-2H} \1_{[-M,M]^{c}}(\xi) \, d\xi.
\end{equation}
Then, for any $\delta \in (0,1)$, we can write
\begin{equation}\label{eq:dcp-K-theta-m}
\sup_{g\in { \mathcal  K}_{m}} K_{\te,m}(g) \le  B_{m,M}^{1} +B_{m,M}^{2},
\end{equation}
where
 \[
 B_{m,M}^{1}= \sup_{g\in { \mathcal  K}_{m}}\bigg\{\frac{\theta}{m}\sum_{1\le j<k\le m}
\int_{\R^m}\gamma_{M}^{1}(x_j-x_k)g^2(x)dx
-{1-\delta\over 2}\int_{\R^m}\vert \nabla g(x)\vert^2
dx\bigg\}
\]
and
\[
B_{m,M}^{2}=
 \sup_{g\in { \mathcal  K}_{m}}\left\{\frac{\theta}{m}\sum_{1\le j<k\le m}
\int_{\R^m}\gamma_{M}^{2}(x_j-x_k)g^2(x)dx
-{\delta\over 2}\int_{\R^m}\vert \nabla g(x)\vert^2
dx\right\}.
\]
The   term $ B_{m,M}^{1}$ is handled by  \cite{ChPh} and we obtain
\begin{equation}\label{eq:limsup-BmM-1}
  \lim_{m\to\infty}\frac{1}{m} B_{m,M}^{1}
  = \mathfrak{E}_{M,\delta,\te},
  \quad\text{with}\quad
\lim_{M\to\infty} \mathfrak{E}_{M,\delta,\te}
= 
\lp\frac{\te}{2}\rp^{\frac{1}{H}}  (1-\delta)^{1-\frac 1H}   \mathcal{E},
 \end{equation}
where the limiting behavior for $\mathfrak{E}_{M,\delta,\te}$ is a direct consequence of Proposition \ref{lem:max-E-theta}.

  \noindent
{\it Step 2: Upper bound for $B_{m,M}^{2}$.}
 We claim that the following inequality holds
  \begin{equation}  \label{eq2-1}
  B_{m,M}^{2} \le \frac {m-1}2 \sup_{g\in { \mathcal  K}_{1}}\left\{ \theta  \int_{\R }\gamma_{M}^{2}(x)g^2(x)dx
-{\delta\over 2}\int_{\R }\vert  g'(x)\vert^2
dx\right\}.
  \end{equation}
  To show this inequality, we write
\begin{eqnarray*}
&& \sum_{1\le j<k\le m}
\int_{\R^m}\gamma_{M}^{2}(x_j-x_k)g^2(x_1,\ldots, x_m)dx_1\cdots dx_m\\
&=& \frac{1}{2}\sum_{j=1}^m \sum_{k:\hskip.03in k\not=j}
\int_{\R^m}\gamma_{M}^{2}(x_j-x_k)g^2(x_1,\ldots, x_m)dx_1\cdots dx_m,
\end{eqnarray*}
and
$$
\int_{\R^m}\vert\nabla g(x)\vert^2 d x=\sum_{j=1}^m\|\nabla_jg\|_2^2
={1\over m-1}\sum_{j=1}^m\sum_{k:\hskip.03in k\not=j}\|\nabla_kg\|_2^2.
$$
Replacing  the supremum of the sum by a sum of supremums in the expression defining $B_{m,M}^{2}$,  we obtain
\begin{eqnarray}\label{eq:bnd-BmM-DmM}
B_{m,M}^{2}&\le  &{1\over 2m}\sum_{j=1}^m
\sup_{g\in { \mathcal K}_{m}}\bigg\{ \theta\sum_{k:\hskip.03in k \not= j}
\int_{\R^m}\gamma_{M}^{2} (x_j-x_k)g^2(x)dx
-{m \delta\over m-1}\sum_{k:\hskip.03in k \not= j}\|\nabla_kg\|_2^2\bigg\}  \notag
\\
&=&\frac{1}{2}\sup_{g\in { \mathcal K}_{m}} D_{m,M}(g),
\end{eqnarray}
where we have set
\begin{equation}\label{eq:def-DmM}
D_{m,M}(g)= \theta\sum_{k=2}^m
\int_{\R^m}\gamma_{M}^{2}(x_k-x_1)g^2(x)dx
-{m\delta\over m-1}\sum_{k=2}^m\|\nabla_kg\|_2^2.
\end{equation}
We now proceed to estimate the terms $\int_{\R^m}\gamma_{M}^{2}(x_k-x_1)g^2(x)dx$ and $\sum_{k=2}^m\|\nabla_kg\|_2^2$ above.

In order to estimate the terms $\int_{\R^m}\gamma_{M}^{2}(x_k-x_1)g^2(x)dx$, notice that
\begin{eqnarray*}
&& \int_{\R^{m-1}}\gamma_{M}^{2}(x_k-x_1)g^2(x_1, \dots, x_m )dx_2 \cdots dx_m \\
&=&\int_{\R^{m-1}}\gamma_{M}^{2}(x_k)g^2(x_1, x_2+x_1\dots, x_m+x_1)dx_2\cdots dx_m\\
&=&\int_{\R^{m-1}}\gamma_{M}^{2}(x_k)\tilde{g}^2(x_1, x_2\dots, x_m)dx_2\cdots dx_m,
\end{eqnarray*}
where the function $\tilde{g}:\R^{m}\to\R$ is defined by
$$
\tilde{g}(x_1, x_2\dots, x_m)=g(x_1, x_2+x_1\dots, x_m+x_1).
$$
Now observe that $\tilde{g}$ belongs to  the space ${ \mathcal K}_{m}$ defined by \eqref{eq:def-K-theta-m}. We thus  obtain
\begin{equation*}
\sum_{k=2}^m
\int_{\R^m}\gamma_{M}^{2}(x_k-x_1)g^2(x_1,\dots, x_m) \, dx
=\sum_{k=2}^m
\int_{\R^m}\gamma_{M}^{2}(x_k)\tilde{g}^2(x_1,\dots, x_m) \, dx.
\end{equation*}
As far as the terms $\|\nabla_kg\|_2^2$ in the definition of  $D_{m,M}(g)$ are concerned, we just notice that
$\|\nabla_k\tilde{g}\|_2^2=\|\nabla_k g\|_2^2$ for every $2\le k\le m$.
Hence, recalling the definition \eqref{eq:def-DmM} of $D_{m,M}(g)$, for any $g\in\ck_{m}$ we end up with
\begin{eqnarray*}
D_{m,M}(g)
&=&\theta \sum_{k=2}^m
 \int_{\R^m}\gamma_{M}^{2}(x_k)\tilde{g}^2(x)dx
-{m\delta\over m-1}\sum_{k=2}^m\|\nabla_k\tilde{g}\|_2^2\\
&\le & \sup_{g\in{ \mathcal K}_{m}}\bigg\{\theta\sum_{k=2}^m
\int_{\R^m}\gamma_{M}^{2}(x_k)g^2(x)dx
-{m\delta\over m-1}\sum_{k=2}^m\|\nabla_kg\|_2^2\bigg\} \, ,
\end{eqnarray*}
and thus
\begin{equation}\label{eq:bnd-DmM-g}
D_{m,M}(g) \le 
\sup_{g\in{ \mathcal K}_{m}}\bigg\{\theta\sum_{k=2}^m
\int_{\R }\gamma_{M}^{2}(x_k)g_k^2(x_k)dx_k
-{m\delta\over m-1}\sum_{k=2}^m\|\nabla_kg\|_2^2\bigg\}\,,
\end{equation}
where for $k=2,\dots, m$ we have set
$$
g_k(x_k )=\bigg(\int_{\R^{m-1}}g^2(x_1,\dots, x_m)
\prod_{1\le j \le m, \, j \not = k} dx_j\bigg)^{1/2}.
$$

Let us further analyze the term $\|\nabla_kg\|_2^2=\|\nabla_kg\|_{L^{2}(\R^{m})}^2$ above, and relate it with $\|g_{k}^{\prime}\|_2^2=\|g_{k}^{\prime}\|_{L^{2}(\R)}^2$.
To this aim, it is readily checked that $g_k\in { \mathcal K}_1$ whenever $g\in\ck_{m}$. Furthermore, we have
\[
 g'_k(x_k)= \bigg(\int_{\R^{m-1}}g^2(x )
\prod_{j \not= k}^mdx_j\bigg)^{-\frac 12}
 \int_{\R^{m-1}}g(x )\nabla_k
g(x )\prod_{j \not =k }^mdx_j.
\]
By Cauchy-Schwarz'  inequality, we thus get
\begin{equation}\label{eq:cauchy-schwarz-1}
\vert g'_k(x_k)\vert^2\le\int_{\R^{m-1}}\vert\nabla_k
g(x )\vert^2\prod_{j \not= k}^mdx_j ,
\end{equation}
which yields
$$
\|g_{k}^{\prime}\|_2^2 
=
\int_{\R }\vert   g'_k(x_k)\vert^2 dx_k
\le\|\nabla_kg\|_2^2,  \hskip.2in k=2,\dots, m.
$$
Plugging this inequality into \eqref{eq:bnd-DmM-g}, we have obtained
\begin{equation*}
D_{m,M}(g) \le 
\sup_{g\in{ \mathcal K}_{m}}\bigg\{\theta\sum_{k=2}^m
\int_{\R }\gamma_{M}^{2}(x_k)g_k^2(x_k)dx_k
-{m\delta\over m-1}\sum_{k=2}^m\|g_{k}^{\prime}\|_2^2\bigg\}\,,
\end{equation*}
and recalling relation \eqref{eq:bnd-BmM-DmM}, this yields
\begin{eqnarray*}
B_{m,M}^{2}&\le &
\frac{1}{2}\sup_{g\in{ \mathcal K}_{m}}\bigg\{ \theta\sum_{k=2}^m
\int_{\R }\gamma_{M}^{2}(x_k)g_k^2(x_k)dx_k
-{m\delta \over m-1}\sum_{k=2}^m\|g_{k}^{\prime}\|_2^2
\bigg\}\\
&\le& {m-1\over 2}\sup_{g\in{ \mathcal K}_1}\bigg\{ \theta \int_{\R }\gamma_{M}^{2}(x)g^2(x)dx
- \delta \int_{\R }\vert  g'(x)
\vert^2 dx \bigg\},
\end{eqnarray*}
which  shows (\ref{eq2-1}).

\noindent
{\it Step 3: Limiting behavior for $B_{m,M}^{2}$.}
The estimate (\ref{eq2-1}) implies
\begin{equation}\label{eq:corol-eq2-1}
\limsup_{m\rightarrow \infty} \frac{B_{m,M}^{2}}{m} \le \frac 12\sup_{g\in{ \mathcal K}_1}\bigg\{ \theta \int_{\R }\gamma_{M}^{2}(x)g^2(x)dx
- \delta \int_{\R }\vert  g'(x)
\vert^2 dx
\bigg\}.
\end{equation}
 We now show that this variation goes to zero as $M$ tends to $\infty$. This can be seen by a simple integration by parts argument. For any $g\in\ck_{1}$ we have
$$
\cf g^{2}(\xi)
=
\int_{\R}e^{-i\xi  x}g^2(x)dx
=
-\frac{i}{\xi} \int_{\R}e^{-i\xi  x}  ( g^2)'(x)dx
=
-\frac{i}{\xi} \, [\cf  (g^2)'](\xi).
$$
Hence Plancherel's  identity yields
\begin{align}\label{eq:plancherel-gammaM-g2}
&\int_{\R }\gamma_{M}^{2}(x)g^2(x)dx 
=
\int_{\{\vert\xi\vert\ge M\}}
\lln \cf g^{2}(\xi) \rrn \mu(d\xi  )
=
\int_{\{\vert\xi\vert\ge M\}} \lln [\cf  ( g^2)'](\xi) \rrn \vert\xi\vert^{-2H}d\xi \notag \\
&\le
\bigg(\int_{\{\vert\xi\vert\ge M\}}\vert\xi\vert^{-4H}d\xi\bigg)^{1/2}
\bigg(\int_{\R} \lln [\cf ( g^2)'](\xi) \rrn^{2} d\xi\bigg)^{1/2}
=
\frac{\lp \int_{\R} \lln [\cf  ( g^2)'](\xi) \rrn^{2} d\xi \rp^{1/2}}{(4H-1)^{1/2} M^{2H-1/2}} ,
\end{align}
and it should be stressed that we are using our assumption $H>1/4$ in order to get nondivergent integrals above.
In addition, another use of Plancherel's  identity enables  us to write
\begin{equation}\label{eq:plancherel-nabla-g2}
\int_{\R} \lln [\cf  ( g^2)'](\xi) \rrn^{2} d\xi
=
2\pi \int_{\R}\vert  ( g^{2})'(x)\vert^2 dx
= 4 \pi \int_{\R} g^2(x)\vert g'(x)\vert^2 dx.
\end{equation}
We now get a uniform bound on $g^{2}$. Since $g\in\ck_{1}$ we have $\|g\|_{2}=1$ and thus
$$
g^2(x)=2\int_{-\infty}^x g(y)g'(y)dy\le 2\|g\|_2 \|g'\|_2
=2\|g'\|_2.
$$
Plugging this information into \eqref{eq:plancherel-nabla-g2} we obtain
$$
\int_{\R} \lln [\cf  ( g^2)'](\xi) \rrn^{2} d\xi
\le 4 \pi \|g'\|_2^3.
$$
Finally, recalling \eqref{eq:plancherel-gammaM-g2} we can write
\begin{equation*}
\theta\int_{\R }\gamma_{M}^{2}(x)g^2(x)dx - \delta \int_{\R} \vert g'(x)\vert^2 dx
\le
\vp(\|g'\|_{2}^{1/2}),
\end{equation*}
where $\vp(x)= a x^{3} - \delta x^{4}$ and $a=\frac{2\pi^{1/2}\te}{(4H-1)^{1/2}M^{2H-1/2}}$. Optimizing over the values of $x\ge 0$ we get a bound which is uniform over $g\in\ck_{1}$
\begin{equation*}
\theta\int_{\R }\gamma_{M}^{2}(x)g^2(x)dx - \delta \int_{\R} \vert g'(x)\vert^2 dx
\le
\frac{c\, \te^{4}}{\delta^{3} M^{2(4H-1)}},
\end{equation*}
with a (nonrelevant) constant $c=\frac{27\pi^{2}}{16}$. Plugging this uniform bound into \eqref{eq:corol-eq2-1}, we thus easily end up with
\begin{equation}\label{eq:limsup-BmM-2}
\lim_{M\to\infty}\limsup_{m\rightarrow \infty} \frac{B_{m,M}^{2}}{m} = 0.
\end{equation}

\noindent
{\it Step 4: Conclusion for the upper bound.} Let us report our estimates \eqref{eq:limsup-BmM-1} and \eqref{eq:limsup-BmM-2} into the upper bound \eqref{eq:dcp-K-theta-m}. This trivially yields
\begin{equation*}
\limsup_{m\to\infty} \frac{1}{m}\sup_{g\in { \mathcal  K}_{m}} K_{\te,m}(g)
\le
\lp\frac{\te}{2}\rp^{\frac{1}{H}} (1-\delta)^{1-\frac 1H}  \mathcal{E}.
\end{equation*}

\noindent
{\it Step 5: Lower bound.}
In order to get the lower bound, we proceed to a  direct verification,
replacing the class ${\mathcal  K}_{m}$ by the smaller class
$$
\ck_{0} \equiv \left\{  g(x)= \prod_{j=1}^mg_0(x_j), \quad  g_0\in {\mathcal K}_1\right\}.
$$
Towards this aim, we resort to the expression \eqref{eq:K-theta-m-fourier-mode} of $K_{\te,m}(g)$ in Fourier modes. Then, for $g\in\ck_{0}$, it is readily checked that $\cf g(\xi)=\prod_{k=1}^{m}\cf g_{0}(\xi_{k})$ for all $\xi=(\xi_{1},\ldots,\xi_{k})$. Invoking this relation, plus the fact that $g_{0}\in\ck_{1}$, we get
\begin{equation*}
\int_{\R^{m}} |\xi|^{2} \, \lln \cf g(\xi)\rrn^{2} \, d\xi
=
(2\pi)^{m-1} \, m \int_{\R} |\la|^{2} \lln \cf g_{0}(\la)\rrn^{2} \, d\la= (2\pi)^m m \int_{\R} | g'(x)|^2 dx, 
\end{equation*}
and recalling that the functions $\hat{g}_{jk}$ are introduced in \eqref{eq:def-gjk},
\begin{equation*}
\int_{\R} |\la|^{1-2H} \hat{g}_{jk}(\la) \, d\la
=
\int_{\R} |\la|^{1-2H} \lln \cf g_{0}^{2}(\la) \rrn^{2} d\la.
\end{equation*}
Plugging those identities into the expression \eqref{eq:K-theta-m-fourier-mode}, we get
\begin{equation*}
K_{\te,m}(g)
=
- \frac{m}{2} \int_{\R} \lp g'_0(x)  \rp^{2} \, dx
+ \frac{\te (m-1)}{2 } \int_{\R} |\la|^{1-2H} \lln \cf g_{0}^{2}(\la) \rrn^{2} d\la.
\end{equation*}
This quantity is easily related to the variational expression $H_{\te}(g_{0})$ defined by \eqref{eq:def-H-theta},
from which the identity
\begin{equation*}
\lim_{m\to\infty} \sup_{g\in\ck_{0}} \frac{K_{\te,m}(g)}{m} = \lp \frac{\te}{2} \rp^{\frac{1}{H}} \ce
\end{equation*}
is readily checked. This shows our lower bound and finishes the proof.
 \end{proof}
 
\subsection{Feynman-Kac semi-groups}\label{sec:fk-semigroups}

For $m\ge 1$  and $t\ge 0$, consider the quantity $Q_{m}(t)$ defined by~\eqref{y3}. Our moment study for the solution to \eqref{eq:spde-pam} will rely on the spectral behavior of the following semi-group acting on $L^{2}(\R^{m})$
\begin{equation}\label{eq:def-semi-group}
T_{m,t}g(x)=\be_{x}\left[g(B(t)) \exp \left(  \frac{c_{H} \, Q_{m}(t)}{m}
 \right)  \right],
\end{equation}
where $c_{H}$ is the constant  defined in (\ref{eq:expr-c1H})  and $x\in\R^{m}$ represents the initial condition for the $m$-dimensional  Brownian motion $B$ appearing in~\eqref{y3}. We now establish some basic properties of those operators.

\begin{proposition}\label{prop:fk-self-adjoint}
Let $\{T_{m,t}, \, t\ge 0\}$ be the family of operators introduced in \eqref{eq:def-semi-group}. Then the following properties hold true:

\noindent
\emph{(i)}
The family $\{T_{m,t}, \, t\ge 0\}$ defines a semi-group of bounded self-adjoint operators on the space $L^{2}(\R^{m})$.

\noindent
\emph{(ii)}
The generator of $T_{m,t}$ is given on test functions by $A_{m}=A_{c_{H},m}$ defined by \eqref{eq:def-H-m}, that is
\begin{equation*}
A_{m}g = \frac12 \Delta g +  \frac{c_{H}}{m} \sum_{1\le j<k\le m}\gamma(x_j-x_k).
\end{equation*}
There exists an integer $m_{0}\ge 1$ such that for $m\ge m_{0}$, $A_{m}$ admits a self-adjoint extension (still denoted by $A_{m}$), defined on a domain ${\rm Dom}(A_{m})\subset L^{2}(\R^{m})$.
\end{proposition}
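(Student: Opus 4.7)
The plan is to derive all the claimed properties from the probabilistic representation \eqref{eq:def-semi-group}, reserving the Dirichlet form analysis of Section~\ref{sec:optim-prob} for the identification of the generator.

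For boundedness of $T_{m,t}$ on $L^{2}(\R^{m})$, I would apply Cauchy--Schwarz inside \eqref{eq:def-semi-group}:
\[
|T_{m,t}g(x)|^{2}\le\be_{x}[g^{2}(B(t))]\cdot\be_{x}\!\left[\exp\!\left(\tfrac{2c_{H}}{m}Q_{m}(t)\right)\right].
\]
The second factor is bounded uniformly in $x\in\R^{m}$ by $c_{3}\exp(c_{4}m^{1/H}t)$, using translation invariance of $Q_{m}(t)$ in law together with \eqref{eq:ineq-moments-u} applied with $\alpha=2c_{H}/m$. Integrating the first factor in $x$ and invoking Fubini plus the symmetry of the Gaussian kernel gives $\int_{\R^{m}}\be_{x}[g^{2}(B(t))]\,dx=\|g\|_{2}^{2}$, whence $\|T_{m,t}g\|_{2}\le C_{m,t}\|g\|_{2}$. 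The semigroup identity $T_{m,t+s}=T_{m,t}\circ T_{m,s}$ then follows from the additivity $Q_{m}(t+s)=Q_{m}(t)+Q_{m}(t,t+s)$, with $Q_{m}(t,t+s)$ a functional of the increments $\{B(t+r)-B(t)\}_{r\in[0,s]}$, combined with the Markov property of $B$.

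For self-adjointness, I would rewrite the inner product using the $m$-dimensional Brownian bridge:
\[
\langle T_{m,t}f,g\rangle=\int_{\R^{m}\times\R^{m}}f(y)g(x)\,p_{t}^{(m)}(x,y)\,\be^{x\to y}_{t}\!\left[\exp\!\left(\tfrac{c_{H}}{m}Q_{m}(t)\right)\right]dx\,dy,
\]
where $p_{t}^{(m)}$ is the $m$-dimensional heat kernel and $\be^{x\to y}_{t}$ is expectation under the bridge from $x$ to $y$ on $[0,t]$. Since $p_{t}^{(m)}(x,y)=p_{t}^{(m)}(y,x)$ and the time-reversal $r\mapsto t-r$ exchanges the bridge from $x$ to $y$ with the bridge from $y$ to $x$ while preserving $Q_{m}$ (a symmetric functional of the path), the integral kernel is symmetric in $(x,y)$, yielding $\langle T_{m,t}f,g\rangle=\langle f,T_{m,t}g\rangle$. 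Strong continuity at $t=0^{+}$ is obtained by dominated convergence, exploiting continuity of Brownian paths together with the $L^{2}$-domination just established.

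For part~(ii), expanding $T_{m,t}g(x)-g(x)$ for $g\in C_{c}^{\infty}(\R^{m})$ and dividing by $t$ gives, formally,
\[
\lim_{t\downarrow 0}\tfrac{1}{t}\bigl(T_{m,t}g(x)-g(x)\bigr)=\tfrac{1}{2}\Delta g(x)+\tfrac{c_{H}}{m}\sum_{1\le j<k\le m}\gamma(x_{j}-x_{k})g(x),
\]
which is the announced expression for $A_{m}$, read in the distributional sense. The generator of the strongly continuous self-adjoint semigroup $T_{m,t}$ is itself self-adjoint by the spectral theorem; to see that its domain is nontrivial and that it coincides with the Friedrichs extension of the form $-K_{c_{H},m}$, I would use Proposition~\ref{prop1}, which implies $\lambda_{c_{H},m}<\infty$ once $m\ge m_{0}$, so $K_{c_{H},m}(g)\le\lambda_{c_{H},m}\|g\|_{2}^{2}$ on $\ck_{m}$. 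Consequently the symmetric form $g\mapsto\tfrac{1}{2}\|\nabla g\|_{2}^{2}-(c_{H}/m)\sum_{j<k}\langle\gamma(x_{j}-x_{k}),g^{2}\rangle$ is bounded below on $C_{c}^{\infty}(\R^{m})$ and closable, so Friedrichs' theorem delivers a self-adjoint extension of $A_{m}$ on a dense domain in $L^{2}(\R^{m})$.

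The main obstacle is that $\gamma$ is a nonpositive Schwartz distribution rather than a bona fide potential, so the pointwise formula for $A_{m}$ must be interpreted through the Dirichlet form. To bridge the probabilistic and analytic viewpoints rigorously, I would approximate $\gamma$ via the splitting $\gamma=\gamma_{M}^{1}+\gamma_{M}^{2}$ of \eqref{eq:def-gaM1-gaM2}: the regularized semigroups $T_{m,t}^{(M)}$ built from Feynman--Kac with the bounded potential $\gamma_{M}^{1}$ have unambiguous generators $\tfrac12\Delta+(c_{H}/m)\sum\gamma_{M}^{1}(x_{j}-x_{k})$, and the vanishing estimate \eqref{eq:limsup-BmM-2} combined with the exponential moment control \eqref{eq:ineq-moments-u} should give $T_{m,t}^{(M)}\to T_{m,t}$ strongly in $L^{2}$ as $M\to\infty$. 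Passing to the limit in the associated quadratic forms identifies the limit generator with the Friedrichs extension of $A_{m}$.
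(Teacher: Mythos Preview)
Your proposal is correct and follows essentially the same route as the paper: Cauchy--Schwarz plus the moment bound \eqref{eq:ineq-moments-u} for $L^{2}$-boundedness, and Friedrichs' extension via the form bound $K_{c_{H},m}(g)\le\lambda_{c_{H},m}\|g\|_{2}^{2}$ supplied by Proposition~\ref{prop1}. The paper's own proof is actually terser than yours---it declares self-adjointness of $T_{m,t}$ ``easily derived'' and the generator identification ``readily checked''---so your bridge/time-reversal argument and the $\gamma_{M}^{1}$-approximation scheme add rigor beyond what the paper supplies rather than deviating from its approach.
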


\begin{proof}
Let us first prove the boundedness of $T_{m,t}$. To this aim, notice that for $g\in L^{2}(\R^{m})$ we have
\begin{equation*}
|T_{m,t}g(x)|^{2} = \be_{x}^{2}\left[g(B(t)) \exp \left(  \frac{c_{H} \, Q_{m}(t)}{m}  \right)  \right] .
\end{equation*}
We now apply Cauchy-Schwarz' inequality, together with relation \eqref{eq:ineq-moments-u}, in order to get
\begin{equation*}
|T_{m,t}g(x)|^{2} 
\le
\be_{x}\left[g^{2}(B(t))\right] \,
\be_{x}\left[\exp \left(  \frac{2\, c_{H} \, Q_{m}(t)}{m}  \right)  \right]
\le
c_{m,H} \, \be_{x}\left[g^{2}(B(t))\right].
\end{equation*}
Taking into account the fact that $p_{t}(x-\cdot)$ is the density of $B_{t}$, we obtain
\begin{eqnarray*}
\|T_{m,t}g\|_{L^{2}(\R^{m})}^{2}
&\le&
c_{m,H} \int_{\R^{m}} \be_{x}\lc g^{2}(B_{t}) \rc \, dx
=
c_{m,H} \int_{\R^{m}} \lp \int_{\R^{m}} p_{t}(x-y) g^{2}(y) \, dy \rp dx  \\
&=&
c_{m,H} \int_{\R^{m}} \lp \int_{\R^{m}} p_{t}(x-y)  \, dx \rp g^{2}(y) \, dy
=
c_{m,H} \|g\|_{L^{2}(\R^{m})}^{2},
\end{eqnarray*}
which proves boundedness in $L^{2}(\R^{m})$. The self-adjointness of $T_{m,t}$ is then easily derived.

It is also readily checked that the infinitesimal generator of $T_{m,t}$, acting on test functions in $\cs(\R^{m})$, is given by $A_{m}$. In addition, this operator is obviously symmetric. In order to show that it admits a self-adjoint extension, it is sufficient (thanks to the classical Freidrichs extension theorem) to show that 
\begin{equation}\label{eq:upper-bnd-Am}
\langle A_{m} g, \, g \rangle \le c \, \|g\|_{L^{2}(\R^{m})}^{2},
\end{equation}
for all $g\in\dom(A_{m})$ and for a constant $c>0$. We now prove this inequality for $m$ large enough: indeed, for all test functions we have $\langle A_{m} g, \, g \rangle= K_{m}(g)$, where we have set $K_{m}(g)=K_{c_{H},m}(g)$ and $K_{\te,m}(g)$ is defined by \eqref{eq:def-K-theta-m}. The fact that there exists an  $m_{0}\ge 1$ such that relation \eqref{eq:upper-bnd-Am} holds for $m\ge m_{0}$ is then an immediate consequence of Proposition \ref{prop1}. This finishes our proof.
\end{proof}

As in the proof of Proposition \ref{prop1}, our future considerations will also rely on a truncated version of the operators $T_{m,t}$. Let us label their definition for further use.

\begin{definition}
Consider a parameter $M>0$, and the function $\ga_{M}^{1}$ defined by \eqref{eq:def-gaM1-gaM2}. For $m\ge 1$ we introduce the following quantities, defined similarly to~\eqref{y3} but replacing $\ga$ by the smoothed function $\ga_{M}^{1}$ given by \eqref{eq:def-gaM1-gaM2}:
\begin{eqnarray}
Q_{m,M}^{1} 
&=&
\sum_{1\le j <k \le m} \int_0^t   \gamma_{M}^{1}(B_j(s)- B_k(s) )ds, \label{eq:def-QmM1} \\
\hq_{m,M}^{1} 
&=&
\sum_{1\le j ,k \le m} \int_0^t   \gamma_{M}^{1}(B_j(s)- B_k(s) )ds.  \label{eq:def-hat-QmM1}
\end{eqnarray}
Related to these Feynman-Kac functionals, we consider a family of operators $\{\htt_{m,M,t},\, t\ge 0\}$ acting on $L^{2}(\R^{m})$, indexed by a parameter $\te>0$
\begin{equation}\label{eq:def-semi-group-trunc}
\htt_{m,M,t}g(x)
=
\be_{x}\left[g(B(t)) \exp \left(  \frac{\te \, \hq_{m,M}^{1}(t)}{m}
 \right)  \right].
\end{equation}
\end{definition}

The family of operators we have just introduced enjoys the following property.

\begin{proposition}\label{prop:fk-self-adjoint-truncated}
Let $\{\htt_{m,M,t},  \, t\ge 0\}$ be the families of operators introduced in~\eqref{eq:def-semi-group-trunc}. The  conclusions of Proposition \ref{prop:fk-self-adjoint} remain true for this semi-group, with a generator $\hat{A}_{m,M}$ given  by
\begin{equation*}
\hat{A}_{m,M}g = \frac12 \Delta g +  \frac{\te}{m} \sum_{1\le j,k\le m}\gamma_{M}^{1}(x_j-x_k).
\end{equation*}
Furthermore, the following limiting behavior holds true for all $x\in\R^{m}$
\begin{equation}\label{eq:lim-FK-bounded}
\lim_{t\to\infty} \frac{\log\lp\htt_{m,M,t}\1(x)\rp}{t}
=\la_{m,M},
\end{equation}
where $\la_{m,M}$ is the principal eigenvalue of $\hat{A}_{m,M}$, defined similarly to \eqref{eq:def-principal-eigen} by
\begin{equation}\label{eq:def-la-mM}
\la_{m,M}
=
\sup\lcl
\te \sum_{j,k=1}^{m}
\int_{\R^{m}}\gamma_{M}^{1}(x_j-x_k)g^2(x)dx
-\frac{1}{2}\int_{\R^{m}}\vert \nabla g(x)\vert^{2}
dx ; \, g\in\ck_{m} \rcl,
\end{equation}
and where we recall  that $\ck_{m}$ is introduced in \eqref{eq:def-Km}.
\end{proposition}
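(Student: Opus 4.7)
The plan is to split the proof into two parts: first, transferring items (i)--(ii) of Proposition~\ref{prop:fk-self-adjoint} to the truncated semigroup and identifying the generator; second, establishing the new limiting identity~\eqref{eq:lim-FK-bounded}. The structural advantage of working with $\ga_M^1$ instead of $\ga$ is that $\ga_M^1$ is the Fourier transform of the finite compactly supported measure $\mu_M^1$, hence a bounded smooth function vanishing at infinity by Riemann--Lebesgue. This yields the deterministic estimate $|\hq_{m,M}^1(t)|\le m^{2}t\|\ga_M^1\|_\infty$ almost surely, which drives all the reductions below.

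For items (i)--(ii) I will follow the template of Proposition~\ref{prop:fk-self-adjoint} with obvious simplifications: boundedness of $\htt_{m,M,t}$ on $L^2(\R^m)$ follows from the Cauchy--Schwarz argument of Proposition~\ref{prop:fk-self-adjoint} without recourse to~\eqref{eq:ineq-moments-u}; self-adjointness is obtained by the same Brownian time-reversal symmetry as before; and Ito's formula identifies the generator on $C_c^\infty(\R^m)$. Since multiplication by the bounded symmetric function $V(x)=\frac{\te}{m}\sum_{j,k}\ga_M^1(x_j-x_k)$ is a bounded perturbation of $\frac12\Delta$, Kato--Rellich produces the self-adjoint extension on $H^2(\R^m)$, with no threshold on $m$ required.

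The heart of the proof is~\eqref{eq:lim-FK-bounded}, a Perron--Frobenius statement for the self-adjoint Feynman--Kac semigroup. For the lower bound I will choose, for each $\ep>0$, a smooth nonnegative compactly supported $g_\ep\in\ck_m$ with $\|g_\ep\|_2=1$ and $\langle \hat A_{m,M}g_\ep,g_\ep\rangle\ge\la_{m,M}-\ep$; the spectral theorem gives $\langle \htt_{m,M,t}g_\ep,g_\ep\rangle\ge e^{t(\la_{m,M}-\ep)}$, while the pointwise domination $\htt_{m,M,t}g_\ep(x)\le\|g_\ep\|_\infty\htt_{m,M,t}\1(x)$ together with the compact support of $g_\ep$ yields
\begin{equation*}
e^{t(\la_{m,M}-\ep)}\le C_\ep \sup_{x\in\operatorname{supp}g_\ep}\htt_{m,M,t}\1(x).
\end{equation*}
The translation invariance of $V$ under $(x_1,\ldots,x_m)\mapsto(x_1+a,\ldots,x_m+a)$ and a strong Markov comparison between starting points will reduce this supremum to $\htt_{m,M,t}\1(x)$ for any fixed $x\in\R^m$ up to a subexponential factor, and sending $\ep\to 0$ closes the lower bound.

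The main obstacle is the upper bound: one must promote the $L^2$ operator norm estimate $\|\htt_{m,M,t}\|_{L^2\to L^2}\le e^{t\la_{m,M}}$ (obtained from self-adjointness, the variational characterization~\eqref{eq:def-la-mM}, and the spectral theorem) to a pointwise bound on $\htt_{m,M,t}\1(x)$, despite $\1\notin L^2(\R^m)$. My plan is to work in the difference coordinates $Y_j=B_j-B_m$ ($j=1,\ldots,m-1$), in which $\hq_{m,M}^1$ depends only on $Y$ and the reduced Schr\"odinger operator inherits the vanishing-at-infinity of $\ga_M^1\in C_0(\R)$ in the directions where all coordinate clusters separate. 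Combining on-diagonal kernel estimates $K_t(x,y)\le[K_t(x,x)K_t(y,y)]^{1/2}$ (from self-adjointness) with standard spectral analysis for Feynman--Kac semigroups in the style of~\cite{ChPh} then yields $\htt_{m,M,t}\1(x)\le C_xe^{t\la_{m,M}}$, completing~\eqref{eq:lim-FK-bounded}.
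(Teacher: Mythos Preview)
Your treatment of items (i)--(ii) is correct and in fact simpler than in the untruncated case: since $\ga_M^1=\cf\mu_M^1$ with $\mu_M^1$ finite and compactly supported, the potential $V(x)=\frac{\te}{m}\sum_{j,k}\ga_M^1(x_j-x_k)$ is a bounded $C^\infty$ function, so $\hat A_{m,M}=\frac12\Delta+V$ is self-adjoint on $H^2(\R^m)$ by the bounded-perturbation theorem, with no restriction on $m$. The paper makes essentially the same observation and points to~\cite{Ch1} for the classical background.

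The substantive divergence is in~\eqref{eq:lim-FK-bounded}. The paper does not argue this limit at all: once $V$ is bounded and smooth it simply invokes~\cite[Theorem~4.1.6]{Ch1}, which is precisely the statement that $\lim_{t\to\infty}t^{-1}\log\be_x[\exp(\int_0^t V(B_s)\,ds)]$ equals the variational quantity~\eqref{eq:def-la-mM} for such potentials. You instead outline a direct proof. Your lower bound is sound: picking a near-optimizer $g_\ep\in C_c^\infty$, the spectral inequality $\langle\htt_{m,M,t}g_\ep,g_\ep\rangle\ge e^{t(\la_{m,M}-\ep)}$ and the domination $\htt_{m,M,t}g_\ep\le\|g_\ep\|_\infty\,\htt_{m,M,t}\1$ give the claim after a one-step Markov argument (using $p_1^V(x,y)\ge e^{-\|V\|_\infty}p_1(x-y)$ to pass from the supremum over $\operatorname{supp}g_\ep$ to a fixed starting point $x$).

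Your upper bound sketch is the weak link. Passing to the differences $Y_j=B_j-B_m$ does eliminate the center-of-mass direction, but the reduced potential $\tilde V(y)=\frac{\te}{m}\sum_{j,k}\ga_M^1(y_j-y_k)$ (with $y_m=0$) does \emph{not} vanish at infinity: the $m$ diagonal terms contribute the positive constant $\te\,\ga_M^1(0)$, and off-diagonal terms persist along directions where several $y_j$ coincide. You acknowledge this (``directions where all coordinate clusters separate''), but handling the full cluster structure is an HVZ-type analysis that is far heavier than what is needed here. Moreover the Cauchy--Schwarz kernel bound $K_t(x,y)\le[K_t(x,x)K_t(y,y)]^{1/2}$ by itself cannot close the argument, since for bounded $V$ the diagonal $K_t(y,y)$ is uniform in $y$ and then $\int K_t(x,y)\,dy$ is not controlled. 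In the end you defer to ``standard spectral analysis in the style of~\cite{ChPh}'', which is effectively the same kind of citation the paper makes to~\cite{Ch1}. There is nothing wrong with that, but then the cleanest route is simply to do as the paper does and invoke the ready-made Feynman--Kac limit theorem~\cite[Theorem~4.1.6]{Ch1} directly.
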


\begin{proof}
The self-adjointness of $T_{m,M,t}$  is completely classical and left to the reader. In order to get the self-adjointness of $A_{m,M}$, we just realize that $\gamma_{M}^{1}:\R\to\R$ is the inverse Fourier transform of a function which is in $L^{2}(\R)$ with compact support. Therefore $\gamma_{M}^{1}$ admits bounded derivatives of all order. The desired self-adjointness property is thus a consequence of classical results, which are summarized e.g in \cite{Ch1}. Finally relation \eqref{eq:lim-FK-bounded} is a classical Feynman-Kac limit, for which we refer to  \cite[Theorem 4.1.6]{Ch1}.
\end{proof}

\section{Asymptotic properties of the moments}\label{sec:moment-asympt}

As in \cite{Ch4,Ch3,CJK}, the spatial asymptotics for equation \eqref{eq:mild-time-dep}  will be established thanks to a sharp estimate of  the tail of  $u(t,0)$ for $t\ge 0$ fixed. As we will see later, this can  be related to some estimates on the moments $u(t,0)$, and our next step will be to obtain the exact asymptotic behavior (as $m\to\infty$) of these moments. Before we begin with this task, we will reduce our problem thanks to a series of lemmas.

First let us observe that the generator we have considered for a proper normalization procedure in Section \ref{sec:var-qty} involves a sum of the form $\frac{1}{m}\sum_{1\le j<k\le m}\gamma(x_j-x_k)$, where we emphasize the normalization by $m$. However, the quantity we manipulate in our Feynman-Kac representation \eqref{y3} is $Q_{m}(t)$, which does not exhibit this normalizing term. We will introduce the missing normalization by a simple scaling argument.

\begin{lemma}
Let $u$ be the solution to \eqref{eq:spde-pam}. Then the moments of $u$ admit the following representation
\begin{equation}\label{eq:fk-with-scaling}
\be  \lc \lp u(t,0)\rp^{m}  \rc= \be_{0} \left[ \exp\left(  \frac{c_{H} Q_{m}(t_{m})}{m}  \right)\right],
\end{equation}
where $t_{m}=m^{1/H}t$.
\end{lemma}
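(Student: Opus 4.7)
The plan is to derive \eqref{eq:fk-with-scaling} from the Feynman--Kac representation \eqref{y3} by a Brownian scaling argument that exploits the scale invariance of the distribution $\gamma$ (which is formally homogeneous of degree $2H-2$, since its spectral measure $\mu(d\xi)=|\xi|^{1-2H}d\xi$ is homogeneous of degree $1-2H$).

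\textbf{Step 1: Starting point.} By Proposition~\ref{prop:exist-unique-FK}(i) applied at $x=0$, we have
\begin{equation*}
\be\!\left[\left(u(t,0)\right)^{m}\right] = \be_{0}\!\left[\exp\!\left( c_{H} Q_{m}(t)\right)\right],
\qquad
Q_{m}(t)= \sum_{1\le j<k\le m} \int_{0}^{t}\gamma\!\left(B_{j}(s)-B_{k}(s)\right) ds.
\end{equation*}
It therefore suffices to establish the distributional identity
$
Q_{m}(t) \stackrel{d}{=} m^{-1} Q_{m}(t_{m})
$
with $t_{m}=m^{1/H}t$.

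\textbf{Step 2: Scaling of $V_{j,k}$.} For $\lambda>0$, Brownian scaling gives that $\{\lambda^{-1}B_{j}(\lambda^{2}s),\, s\ge 0\}_{j=1,\dots,m}$ is again a family of independent standard Brownian motions from $0$. The cleanest way to transfer this to $V_{j,k}$ is through the $\varepsilon$-regularization recalled after \eqref{y1}. Writing
\begin{equation*}
V_{j,k}^{\varepsilon}(t) = \int_{0}^{t}\int_{\R}e^{-\varepsilon|\xi|^{2}} e^{i\xi(B_{j}(s)-B_{k}(s))}|\xi|^{1-2H}d\xi\,ds,
\end{equation*}
I substitute $B_{\bullet}(s)=\lambda^{-1}\tilde B_{\bullet}(\lambda^{2}s)$, then change variables $\eta=\xi/\lambda$ and $u=\lambda^{2}s$. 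The Jacobians combine to produce a prefactor $\lambda^{(1-2H)+1-2}=\lambda^{-2H}$, and simultaneously convert $\varepsilon$ into $\varepsilon\lambda^{2}$ and the time horizon $t$ into $\lambda^{2}t$. Passing $\varepsilon\downarrow 0$ (using the $L^{2}(\Omega)$-convergence quoted in \eqref{y1}) yields
\begin{equation*}
V_{j,k}(t) \stackrel{d}{=} \lambda^{-2H}\, V_{j,k}(\lambda^{2}t),
\end{equation*}
jointly for all pairs $(j,k)$ since the same family of rescaled Brownian motions is used. Summing over $1\le j<k\le m$ yields $Q_{m}(t)\stackrel{d}{=}\lambda^{-2H}Q_{m}(\lambda^{2}t)$.

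\textbf{Step 3: Choice of $\lambda$ and conclusion.} I now choose $\lambda$ so that the scaling factor absorbs the $1/m$: taking $\lambda^{2H}=m$, i.e.\ $\lambda^{2}=m^{1/H}$, gives $\lambda^{2}t=t_{m}$ and
\begin{equation*}
Q_{m}(t) \stackrel{d}{=} \frac{1}{m}\,Q_{m}(t_{m}).
\end{equation*}
Substituting this into the Feynman--Kac identity of Step 1 produces \eqref{eq:fk-with-scaling}.

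The only genuinely delicate point is the rigorous justification of Step 2, because $\gamma$ is a distribution rather than a pointwise function: one cannot simply write $\gamma(\lambda^{-1}x)=\lambda^{2-2H}\gamma(x)$. Working at the level of the regularized functionals $V_{j,k}^{\varepsilon}$, where the spectral formula is an honest integral, and passing to the limit at the end, bypasses this issue cleanly. Everything else is bookkeeping.
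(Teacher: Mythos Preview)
Your proof is correct and follows essentially the same route as the paper's: both establish the distributional identity $Q_m(t)\stackrel{d}{=}m^{-1}Q_m(t_m)$ via Brownian scaling combined with a change of variables in the spectral integral, the only cosmetic differences being that the paper plugs in the specific factors $m^{1/H}$ and $m^{1/2H}$ from the outset rather than working with a general $\lambda$, and does not spell out the $\varepsilon$-regularization step.
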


\begin{proof}
Start from expression \eqref{y3} for $\be [(u(t,0))^m]$, and recall that
\begin{equation*}
Q_m(t)=  \sum_{1\le j <k \le m} \int_0^t  \int_{\R}  
e^{ i\xi  (B_j(s)- B_k(s))} \vert\xi\vert^{1-2H}d\xi ds.
\end{equation*}
Setting $v=m^{1/H}s$ in the time integral, and invoking the fact that $B_{j}(m^{-1/H}\cdot)$ is equal in law to $m^{-1/2H}B_{j}(\cdot)$, we get
\begin{equation*}
Q_m(t)\stackrel{(d)}{=} \frac{1}{m^{1/H}}
\sum_{1\le j <k \le m} \int_{0}^{t_{m}}  \int_{\R} 
e^{ i\frac{\xi  (B_j(v)- B_k(v))}{m^{1/2H}}} \vert\xi\vert^{1-2H}d\xi dv.
\end{equation*}
We now set $m^{-1/2H}\xi=\la$ in the space integral above. This easily yields an equality in law between $Q_m(t)$ and $m^{-1} Q_m(t_m)$, and thus
\begin{equation}\label{eq:FK-change-variables}
\be \lc \lp u(t,0)\rp^{m}\rc=
 \be_0 \lc\exp\left(\frac{c_H \, Q_m(t_m)}{ m}
  \right) \rc,
\end{equation}
which corresponds to our claim.
\end{proof}

We now establish a couple of simple monotonicity properties for the quantity $Q_{m}$ which will feature in the sequel. The first one is related to our regularization procedure.

\begin{lemma}\label{lem:QmM-increase-M}
Let $Q_{m,M}^{1}$ be the quantity defined by \eqref{eq:def-QmM1} and let $\al$ be a positive constant. Then, the map $M\mapsto \be_{0}[\exp(\al Q_{m,M}^{1}(t))]$ is increasing.
\end{lemma}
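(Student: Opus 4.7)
The plan is to expand both the exponential and each power of $Q_{m,M}^{1}(t)$ into a spectral representation via $\gamma_{M}^{1}$, and then observe that monotonicity in $M$ comes for free from the fact that the resulting integrand, after taking the Brownian expectation, is pointwise non-negative.

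First I would write
\begin{equation*}
\be_{0}\lc \exp\lp \al\, Q_{m,M}^{1}(t) \rp \rc
=\sum_{n\ge 0}\frac{\al^{n}}{n!}\,\be_{0}\lc \lp Q_{m,M}^{1}(t)\rp^{n}\rc,
\end{equation*}
where the series converges because for each fixed $M$ the function $\gamma_{M}^{1}$ is bounded (being the Fourier transform of the finite measure $\mu_{M}^{1}$ defined in \eqref{eq:def-mu1-mu2}), so that $Q_{m,M}^{1}(t)$ is bounded by $\binom{m}{2}\|\gamma_{M}^{1}\|_{\infty} t$. Expanding the $n$-th power, for each choice of indices $(j_{i},k_{i})_{1\le i\le n}$ with $j_{i}<k_{i}$, I would insert the spectral representation $\gamma_{M}^{1}(x)=\int_{-M}^{M} e^{-i\xi x}\mu(d\xi)$ and obtain, via Fubini,
\begin{equation*}
\be_{0}\Bigl[\prod_{i=1}^{n}\gamma_{M}^{1}(B_{j_{i}}(s_{i})-B_{k_{i}}(s_{i}))\Bigr]
=\int_{[-M,M]^{n}}
\be_{0}\Bigl[e^{-i\sum_{i}\xi_{i}(B_{j_{i}}(s_{i})-B_{k_{i}}(s_{i}))}\Bigr]
\prod_{i=1}^{n}\mu(d\xi_{i}).
\end{equation*}

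The crucial observation is that $X:=\sum_{i=1}^{n}\xi_{i}(B_{j_{i}}(s_{i})-B_{k_{i}}(s_{i}))$ is a centered real-valued Gaussian random variable under $\be_{0}$ (the starting point cancels in each difference), so
\begin{equation*}
\be_{0}\lc e^{-iX}\rc = \exp\Bigl(-\tfrac12\,\mathrm{Var}(X)\Bigr)\in (0,1].
\end{equation*}
Therefore, after integrating over $(s_{1},\dots,s_{n})\in[0,t]^{n}$ and summing over the index tuples, each coefficient $\be_{0}[(Q_{m,M}^{1}(t))^{n}]$ can be written as an integral of a non-negative function against the positive product measure $\prod_{i}\1_{[-M,M]}(\xi_{i})|\xi_{i}|^{1-2H}d\xi_{i}\, ds_{i}$.

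Finally, for $0<M_{1}\le M_{2}$ the domain $[-M_{1},M_{1}]^{n}$ is contained in $[-M_{2},M_{2}]^{n}$, and since the integrand is non-negative this gives
\begin{equation*}
\be_{0}\lc (Q_{m,M_{1}}^{1}(t))^{n}\rc\le \be_{0}\lc (Q_{m,M_{2}}^{1}(t))^{n}\rc
\qquad\text{for every }n\ge 0.
\end{equation*}
Multiplying by $\al^{n}/n!\ge 0$ and summing in $n$ yields the claimed monotonicity. The only technical point worth being careful about is the use of Fubini between the Brownian expectation, the time integrals, and the spectral integrals; this is routine because each spectral integral runs over a bounded set and $|\xi|^{1-2H}$ is integrable on $[-M,M]$ for $H<1$, so everything is absolutely integrable. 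I do not expect a genuine obstacle here; the proof is essentially a bookkeeping exercise built on the positivity of the Gaussian characteristic function.
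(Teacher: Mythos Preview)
Your proposal is correct and follows essentially the same route as the paper: expand the exponential as a power series, write each moment $\be_{0}[(Q_{m,M}^{1}(t))^{n}]$ via the spectral representation of $\gamma_{M}^{1}$, and use that the Gaussian characteristic function $\be_{0}[e^{i\sum_{l}\xi_{l}(B_{j_{l}}(s_{l})-B_{k_{l}}(s_{l}))}]$ is strictly positive to conclude that each moment is an increasing function of $M$. Your write-up is in fact slightly more careful than the paper's, since you justify the convergence of the exponential series and the application of Fubini.
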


\begin{proof}
Consider $M>0$. Then we have
\begin{equation}\label{eq:taylor-exp-QmM}
\be_{0}\lc \exp(\al Q_{m,M}^{1}(t)) \rc
=
 \sum_{n=0}^{\infty} \frac{\al^{n}\,\be_{0}\lc\lp Q_{m,M}^{1}(t))\rp^{n}\rc}{n!}.
\end{equation}
We now show that, for any fixed $n\ge 1$, the map $M\mapsto \be_{0}[( Q_{m,M}^{1}(t)))^{n}]$ is increasing. Indeed, recalling our notation \eqref{eq:def-mu1-mu2} for $\mu_{M}^{1}$, it is readily checked that
\begin{equation}\label{eq:moments-QmM}
\be_{0}\lc\lp Q_{m,M}^{1}(t))\rp^{n}\rc
= \sum_{l=1}^{n} \sum_{1\le j_{l}<k_{l}\le m}
\int_{[0, t]^{n}}\!\!\int_{\R^{n}}
\be_{0}
\lc e^{i \sum_{l=1}^{n} \xi_l  (B_{j_{l}}(s_l)-B_{k_{l}}(s_l))  }\rc  
\, [\mu_{M}^{1}]^{\otimes n}\!(d\xi) \, ds,
\end{equation}
where we use the simple convention $d\xi=d\xi_{1}\cdots d\xi_{n}$ and $ds=ds_{1}\cdots ds_{n}$. Now notice that for all $j,k,l$ we have 
\begin{equation*}
\be_{0}  \lc e^{ i\sum_{l=1}^{m}\xi_l  (B_{j_l}(s_l)- B_{k_l}(s_l))} \rc >  0.
\end{equation*}
This easily yields the fact that $M\mapsto \be_{0}[( \al Q_{m,M}^{1}(t)))^{n}]$ is increasing. Going back to our decomposition \eqref{eq:taylor-exp-QmM}, we have thus obtained that $M\mapsto \be_{0}[\exp(\al Q_{m,M}^{1}(t))]$ is increasing. 
\end{proof}

The second monotonicity property we need concerns the dependence with respect to the initial condition for our underlying Brownian motion. 
\begin{lemma}\label{lem:monotone-init-cdt}
Recall that  $ B= \{ B(s)= (B_1(s), \dots, B_m(s)), s\ge 0 \} $ is an $m$-dimensional Brownian motion. For any $x\in \mathbb{R}^m$, recall that we use  $\be_x$  to denote the mathematical expectation with respect to $B$  with $B(0)=x$. Then the following relation is verified for all $m\ge 1$, $t\ge 0$, $\te\ge 0$ and $x\in\R^{m}$
\begin{equation*}
\be_{0}\lc \exp\left( \frac{\theta  Q_m(t)}{m}  \right) \rc
\ge  \be_{x}\lc
\exp\left( \frac{\theta  Q_m(t)}{m}  \right)\rc.
\end{equation*}
For any fixed $M>0$, the same property holds true for $Q_{m,M}^{1}(t)$ and $Q_{m,M}^{2}(t)$.
\end{lemma}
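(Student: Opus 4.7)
The plan is to reduce the exponential moment inequality to a moment-by-moment comparison via Taylor expansion. Since $\theta\ge 0$, we have
\begin{equation*}
\be_x\!\left[\exp\!\left(\frac{\theta Q_m(t)}{m}\right)\right] = \sum_{n=0}^\infty \frac{(\theta/m)^n}{n!}\, \be_x[Q_m(t)^n],
\end{equation*}
and the analogous identity for $Q_{m,M}^{1}(t)$ and $Q_{m,M}^{2}(t)$. The Fourier representation below will show that each moment is nonnegative, so it suffices to prove the pointwise bound $\be_x[Q_m(t)^n]\le \be_0[Q_m(t)^n]$ for every $n\ge 0$, and likewise for the truncated quantities; the exponential moment inequality then follows by summing nonnegative terms.

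The core of the argument is the moment computation performed as in the derivation of \eqref{eq:moments-QmM}. For $Q_{m,M}^{1}(t)$ one obtains
\begin{equation*}
\be_x[(Q_{m,M}^{1}(t))^n] = \sum_{l=1}^n\sum_{1\le j_l<k_l\le m} \int_{[0,t]^n}\!\!\int_{\R^n} \be_x\!\left[e^{i\sum_l \xi_l (B_{j_l}(s_l)-B_{k_l}(s_l))}\right] [\mu_{M}^{1}]^{\otimes n}(d\xi)\, ds.
\end{equation*}
Under $\be_x$ the exponent is Gaussian with mean $\sum_l \xi_l(x_{j_l}-x_{k_l})$ and variance $\sigma^2=\sigma^2(\xi,s,j,k)$ independent of $x$, so the Brownian expectation equals $e^{i\sum_l \xi_l(x_{j_l}-x_{k_l})}\,e^{-\sigma^2/2}$. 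Since $[\mu_M^{1}]^{\otimes n}$ is invariant under $\xi\mapsto-\xi$ while $\sigma^2$ is even in $\xi$ and the sine is odd, the imaginary part integrates to zero, leaving the nonnegative integrand $\cos(\sum_l \xi_l(x_{j_l}-x_{k_l}))\,e^{-\sigma^2/2}$. The elementary bound $\cos\le 1$ and positivity of the measure $[\mu_M^1]^{\otimes n}$ then yield $\be_x[(Q_{m,M}^{1}(t))^n]\le \be_0[(Q_{m,M}^{1}(t))^n]$. The same argument applies verbatim to $Q_{m,M}^{2}(t)$ with $\mu_M^{1}$ replaced by the (still symmetric, nonnegative) measure $\mu_M^{2}$, interpreted via the corresponding regularized definition.

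For the untruncated $Q_m(t)$, the identical Gaussian computation applied to the $\ep$-regularized functional
\begin{equation*}
Q_m^\ep(t) := \sum_{1\le j<k\le m}\int_0^t\!\!\int_\R e^{-\ep|\xi|^2}\, e^{i\xi(B_j(s)-B_k(s))}\,\mu(d\xi)\,ds
\end{equation*}
gives $\be_x[\exp(\theta Q_m^\ep(t)/m)]\le \be_0[\exp(\theta Q_m^\ep(t)/m)]$ by the Taylor expansion argument above. The main (though mild) obstacle is to pass to the limit $\ep\to 0$: the $L^2(\Omega)$-convergence $Q_m^\ep(t)\to Q_m(t)$ used in the definition of $V_{j,k}(t)$ must be upgraded to convergence of the exponential moments on both sides. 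This is achieved by a uniform integrability argument using the bound \eqref{eq:ineq-moments-u} from Proposition \ref{prop:exist-unique-FK} together with Gaussian hypercontractivity, which controls the higher moments of the exponential uniformly in $\ep$. Once this upgrade is in place, the inequality for $Q_m(t)$ follows immediately.
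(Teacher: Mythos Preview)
Your proof is correct and follows essentially the same route as the paper's: both reduce to the moment inequality $\be_x[Q_m(t)^n]\le \be_0[Q_m(t)^n]$ via Taylor expansion, compute the Brownian expectation in Fourier variables, and observe that the initial condition $x$ enters only through a unimodular phase factor multiplying the strictly positive Gaussian characteristic function $e^{-\sigma^2/2}$. Your symmetry/cosine argument is just a slightly more explicit version of the paper's triangle-inequality step, and your $\ep$-regularization for the untruncated $Q_m$ is in fact more careful than the paper's direct formula; the one minor point is that the uniform integrability you need follows already from the moment comparison $\be_x[(Q_m^\ep)^n]\le \be_0[(Q_m^\ep)^n]\le \be_0[(Q_m)^n]$ together with \eqref{eq:ineq-moments-u}, without any appeal to hypercontractivity.
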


\begin{proof}
We focus on the property for $Q_{m}$, the equivalent for $Q_{m,M}^{i}$ ($i=1,2$) being shown exactly in the same way. Furthermore, resorting to   expansion \eqref{eq:taylor-exp-QmM} as in the previous proof, our claim can be reduced to show that
\begin{equation}\label{eq:monotone-moments}
\be_{0}\lc \lp Q_m(t) \rp^{n} \rc
\ge  \be_{x}\lc \lp Q_m(t) \rp^{n}\rc,
\qquad\text{for all } n\ge 1,
\end{equation}
We now focus on relation \eqref{eq:monotone-moments}.

In order to show \eqref{eq:monotone-moments}, we first decompose the moments of $Q_{m}(t)$ similarly to \eqref{eq:moments-QmM}
\begin{equation}\label{eq:dcp-moments-Qm}
\be_{x}\lc\lp  Q_m(t) \rp^{n} \rc
=
\int_{[0, t]^n}\!\!\int_{(\R^m)^n}
\be_{0}\lc D_{m,n}\rc \, [\mu_{M}^{1}]^{\otimes n}\!(d\xi) \, ds,
\end{equation}
where we have set 
$D_{m,n}=\prod_{l=1}^n \sum_{1\le j_{l}<k_{l}\le m}\ e^{ i\xi_l(x_j-x_k)} e^{i \xi_l  (B_j(s_l)-B_k(s_l))  }$.
Furthermore, one can reorder terms in the quantity $D_{m,n}$, and obtain
\begin{equation*}
D_{m,n}=
\sum_{j_1,\dots, j_n, k_1, \dots, k_n=1 \atop j_l < k_l, \forall l=1,\dots, n}^mC(j_1,\dots, j_n, k_1, \dots, k_n)
e^{  i\sum_{l=1}^m\xi_l  (B_{j_l}(s_l)- B_{k_l}(s_l))},
\end{equation*}
where the constants $C(j_1,\dots, j_n, k_1, \dots, k_n)$ satisfy $|C(j_1,\dots, j_n, k_1, \dots, k_n)|=1$. It should also be noticed that
\begin{equation}\label{eq:ident-caract-fct}
\sum_{j_1,\dots, j_n, k_1, \dots, k_n=1 \atop j_l <k_l, \forall l=1,\dots, n}^m
e^{ i\sum_{l=1}^m\xi_l  (B_{j_l}(s_l)- B_{k_l}(s_l))}
=\prod_{l=1}^n
\sum_{1\le j<k\le m}
e^{i\xi_l  (B_j(s_l)-B_k(s_l))}.
\end{equation}
Hence, taking the mathematical expectation yields
\begin{eqnarray*}
\be_{0}\lc D_{m,n}\rc&=&
\Bigg|\sum_{j_1,\dots, j_n, k_1, \dots, k_n=1 \atop j_l <k_l, \forall l=1,\dots, n}^m
C(j_1,\dots, j_n, k_1, \dots, k_n)
\be_{0}\lc  e^{  i\sum_{l=1}^m\xi_l  (B_{j_l}(s_l)- B_{k_l}(s_l))} \rc \Bigg| \\
&\le&
\sum_{j_1,\dots, j_n, k_1, \dots, k_n=1 \atop j_l <k_l, \forall l=1,\dots, n}^m
\be_{0}  \lc e^{ i\sum_{l=1}^m\xi_l  (B_{j_l}(s_l)- B_{k_l}(s_l))} \rc,
\end{eqnarray*}
where the inequality follows from the fact that $\be [ e^{ i\sum_{l=1}^m\xi_l  (B_{j_l}(s_l)- B_{k_l}(s_l))}]>0$. Plugging this information into \eqref{eq:dcp-moments-Qm} and taking \eqref{eq:ident-caract-fct} into account, we thus end up with.
\begin{equation*}
\be_{x}\lc\lp  Q_m(t) \rp^{n} \rc
\le
\int_{[0, t]^n}\!\!\int_{(\R^m)^n}
\be_{0}\lc \prod_{l=1}^n \sum_{1\le j<k\le m} e^{i\xi_l  (B_j(s_l)-B_k(s_l))}\rc \, [\mu_{M}^{1}]^{\otimes n}\!(d\xi) \, ds
=
\be_{0}\lc\lp  Q_m(t) \rp^{n} \rc.
\end{equation*}
This proves \eqref{eq:monotone-moments}, and thus our claim.
\end{proof}

We are now ready to prove our main asymptotic theorem for moments of $u$.

\begin{theorem}\label{thm:asymptotic-moments-utx}
Let $t\in\R_{+}$ and $x\in\R$.  
Then the following asymptotic result holds true
\begin{equation}\label{eq:asymptotic-moments-utx}
\lim _{m\to\infty}\frac{1}{m^{1+\frac{1}{H}}} \, \log \lp\be \lc \lp u(t,x)\rp^{m}\rc\rp
 =  \lp\frac{c_{H}}{2}\rp^{\frac{1}{H}} \ce t,
\end{equation}
where $\mathcal{E} =\ce_{1}$ denotes the constant  given by relation \eqref{eq:def-e-theta}.
\end{theorem}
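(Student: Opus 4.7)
The plan is to combine the Feynman--Kac scaling identity \eqref{eq:fk-with-scaling} with the spectral asymptotics of Proposition~\ref{prop1}. Since $m^{1+1/H}=m\,t_m/t$ for $t_m=m^{1/H}t$, the claim \eqref{eq:asymptotic-moments-utx} reduces to
\begin{equation*}
\lim_{m\to\infty}\frac{1}{m\,t_m}\log \be_0\lc \exp\lp \frac{c_H\,Q_m(t_m)}{m}\rp \rc = \lp \frac{c_H}{2}\rp^{1/H}\ce = \lim_{m\to\infty}\frac{\la_{c_H,m}}{m},
\end{equation*}
where the right-hand identity is Proposition~\ref{prop1} applied with $\te=c_H$. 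In words, I need to show that the Feynman--Kac exponential moment behaves, at logarithmic scale, like $\exp(\la_{c_H,m}\,t_m)$ in the joint limit $m,t_m\to\infty$. The principal-eigenvalue limit \eqref{eq:lim-FK-bounded} only supplies this for \emph{fixed} $m$ and only for the smooth truncation $\htt_{m,M,t}$, so the argument splits into matching upper and lower bounds that both pass through the decomposition $\ga=\ga_M^1+\ga_M^2$ from \eqref{eq:def-gaM1-gaM2}.

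For the upper bound, fix $M>0$ and conjugate exponents $p,q>1$, set $Q_m=Q_{m,M}^1+Q_{m,M}^2$, and apply H\"older to obtain
\begin{equation*}
\log\be_0[e^{c_H Q_m(t_m)/m}] \le \tfrac{1}{p}\log\be_0[e^{pc_H Q_{m,M}^1(t_m)/m}] + \tfrac{1}{q}\log\be_0[e^{qc_H Q_{m,M}^2(t_m)/m}].
\end{equation*}
Because $\ga_M^1$ is bounded and smooth, Proposition~\ref{prop:fk-self-adjoint-truncated} together with uniform-in-$m$ heat-kernel estimates upgrades \eqref{eq:lim-FK-bounded} to a quantitative bound $\frac{1}{t_m}\log\be_0[e^{pc_H Q_{m,M}^1(t_m)/m}]\le \la_{m,M}+o_m(1)$ for the corresponding principal eigenvalue, and Step~1 of the proof of Proposition~\ref{prop1} then gives $\la_{m,M}/m\to (pc_H/2)^{1/H}\ce$ as $m\to\infty$ and then $M\to\infty$. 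For the tail factor, expanding the exponential as a chaos series and using the Plancherel argument from Step~3 of that proof (where $H>1/4$ is essential) yields $\frac{1}{m t_m}\log\be_0[e^{qc_H Q_{m,M}^2(t_m)/m}]\to 0$ as $M\to\infty$ uniformly in $m$. Sending $m\to\infty$, then $M\to\infty$, and finally $p\downarrow 1$ closes the upper bound at $(c_H/2)^{1/H}\ce$.

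For the lower bound, Lemmas \ref{lem:QmM-increase-M} and \ref{lem:monotone-init-cdt} allow me to pass to the truncated semigroup $\htt_{m,M,t_m}$ started from the origin. Applying Rayleigh--Ritz, $\la_{m,M}\ge \lla \hat{A}_{m,M} g, g\rra/\|g\|_{L^2}^2$, with the product trial function $g(x)=\prod_{j=1}^m g_0(x_j)$ for $g_0\in\ck_1$ chosen as a near-maximizer of $H_{c_H/2}$, the computation carried out in Step~5 of the proof of Proposition~\ref{prop1} shows $\lla \hat{A}_{m,M} g, g\rra/(m\|g\|_{L^2}^2)\to H_{c_H/2}(g_0)\ge (c_H/2)^{1/H}\ce-\eta$ as $m\to\infty$ then $M\to\infty$. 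Combined with the easy direction of \eqref{eq:lim-FK-bounded} (obtainable by testing against a compactly supported positive perturbation of the ground state), this yields the matching lower bound after letting $\eta\downarrow 0$.

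The main obstacle is the \emph{uniform-in-$m$} Feynman--Kac upper bound: the classical proof of \eqref{eq:lim-FK-bounded} produces a remainder that depends on $m$ through ground-state or heat-kernel constants, and since $t_m=m^{1/H}t$ grows with $m$, this $m$-dependence must be tracked carefully so that it is absorbed into the $o(m\,t_m)$ error. A secondary delicate point is the tail estimate on $Q_{m,M}^2$: because $\ga_M^2$ is only a distribution, it has to be controlled entirely in Fourier modes via Plancherel, with the hypothesis $H>1/4$ entering exactly as in the bound \eqref{eq:plancherel-gammaM-g2}.
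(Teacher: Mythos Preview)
Your overall architecture matches the paper's: scaling via \eqref{eq:fk-with-scaling}, splitting $\ga=\ga_M^1+\ga_M^2$, a H\"older upper bound with $M\to\infty$ then $p\downarrow 1$, and a lower bound by testing the Feynman--Kac semigroup against product functions $g^{\otimes m}$. The lower bound is essentially correct; the paper in fact works directly with the \emph{untruncated} semigroup $T_{m,t}$ (which is well-defined by Proposition~\ref{prop:fk-self-adjoint}) and uses Jensen on the spectral measure, $\langle T_{m,t_m}g_m,g_m\rangle\ge\exp(t_m K_m(g_m))$, rather than passing through $\htt_{m,M,t}$---but your truncated route can be salvaged since the Taylor/Fourier positivity underlying Lemma~\ref{lem:QmM-increase-M} also gives $\be_0[\exp(\alpha Q_m)]\ge\be_0[\exp(\alpha Q_{m,M}^1)]$.

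The genuine gap is exactly the obstacle you flag at the end but do not resolve. You write that ``uniform-in-$m$ heat-kernel estimates'' upgrade \eqref{eq:lim-FK-bounded} to $\frac{1}{t_m}\log\be_0[e^{pc_H Q_{m,M}^1(t_m)/m}]\le \la_{m,M}+o_m(1)$, but the standard Feynman--Kac upper bound for \eqref{eq:lim-FK-bounded} produces constants that blow up with the ambient dimension $m$, and there is no indication how to make them uniform. The paper does \emph{not} attempt this. Instead it uses a \textbf{coarse-graining} device: fix an integer $\rho$, write $m=n\rho+r$, pass to the diagonalised quantity $\hat{Q}_{m,M}^1(t)=\int_0^t\int_\R|\sum_{j=1}^m e^{i\xi B_j(s)}|^2\,\mu_M^1(d\xi)\,ds$ (adding the diagonal terms costs only $O(m^{1/H})$, which is negligible), and apply the elementary inequality $|\sum_{j=0}^n z_j|^2\le(n+1)\sum_{j=0}^n|z_j|^2$ to split the sum over $j$ into $n$ blocks of size $\rho$. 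By independence of the Brownian motions this factorises the exponential moment, and one is left with $n$ copies of a Feynman--Kac functional in \emph{fixed} dimension $\rho$, for which \eqref{eq:lim-FK-bounded} applies as $t_m\to\infty$. Only afterwards does one let $\rho\to\infty$ and invoke Proposition~\ref{prop1}. This decoupling step is the missing idea in your proposal.

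A smaller point: for the tail term $Q_{m,M}^2$ the paper does not expand in chaos. It first reduces from $m$ Brownian motions to one via a H\"older-type product bound (giving $R_{\te,M,m}\le\be_0^{m-1}[\exp(\frac{\te}{2}\int_0^{t_m}\int_\R e^{i\xi B(s)}\mu_M^2(d\xi)\,ds)]$), then uses the Markov property together with Lemma~\ref{lem:monotone-init-cdt} to reduce to unit time, so that the $M\to\infty$ limit becomes a question about a single, fixed expectation. Your Plancherel route may work, but you would still need to explain why the resulting bound is uniform in $m$ after dividing by $m\,t_m$.
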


\begin{proof}
Recall that the law of $u(t,x)$ does not depend on $x$, and we thus start from expression~\eqref{eq:fk-with-scaling} for $\be [ (u(t,0))^{m}]$. With a lower bound in mind, our first task will be to relate this expression  to the semi-group $T_{m,t}$ introduced in \eqref{eq:def-semi-group}.

\noindent
\emph{Step 1: Spectral representation.} Let us consider a function $g:\R\to\R$ with support in a given interval $[-a,a]$. For $m\ge 1$ we define $g_{m}=g^{\otimes m}$, that is,
\begin{equation*}
g_{m}: \R^{m}\longrightarrow\R^{m}, \qquad x=(x_{1},\ldots,x_{m})\longmapsto \prod_{j=1}^{m} g(x_{j}).
\end{equation*}
Owing to Lemma \ref{lem:monotone-init-cdt} plus the trivial relation $\frac{g(x)}{\|g\|_{\infty}}\le 1$, we have
\begin{eqnarray*}
\be \lc \lp u(t,0) \rp^{m}\rc
&\ge &    (2a)^{-m}\int_{[-a, a]^m}
\be_{x}\lc\exp\left(  \frac{c_{H} Q_m(t_m)}{m}   \right) \rc  dx\\
&\ge &   \left(2\|g\|_\infty^{2} a \right)^{-m}\int_{[-a, a]^m} g_{m}(x)
\be_{x}\left[ g_{m}( B (t_m))  \exp  \left( \frac{c_{H} Q_m(t_m)}{m}    \right)
\right]dx\,.
\end{eqnarray*}
Invoking our definition \eqref{eq:def-semi-group}, we have thus obtained
\begin{equation}\label{eq:low-bnd-1-utx}
\be \lc \lp u(t,0) \rp^{m}\rc
\ge
\frac{\lla T_{m,t_{m}} g_{m}, \, g_{m}\rra_{L^{2}(\R^{m})}}{\left(2\|g\|_\infty^{2} a \right)^{m}}.
\end{equation}

We now consider a small parameter $\ep>0$. We also recall our definitions \eqref{eq:def-K-theta-m} and \eqref{eq:def-H-m} for $K_{\te,m}$ and $A_{\te,m}$, and set $K_{m}:= K_{c_{H},m}$ (resp. $A_{m}:= A_{c_{H},m}$) to alleviate notations. Owing to our computations in the proof of Proposition \ref{prop1} (Step 5), we can choose $g$ satisfying $\|g\|_{L^{2}(\R)}=1$ and such that
\begin{equation}\label{eq:hyp-g}
\liminf_{m\to\infty} \frac{K_{m}(g_{m})}{m} \ge \ce_{H} -\ep, 
\quad\text{where}\quad
\ce_{H} =\lp\frac{c_{H}}{2}\rp^{\frac{1}{H}} \ce.
\end{equation}
This quantity can be related to the semi-group $T_{m,t}$ in the following way.
Due to the fact that $A_{m}$ is self-adjoint (see Proposition \ref{prop:fk-self-adjoint}), $T_{m,t}$ admits a spectral representation related to its generator. Hence, since $\|g\|_{L^{2}(\R)}=1$, there exists a probability measure $\nu_{g}$ on $\R$ such that:
\begin{equation*}
\langle T_{m,t_{m}} g_{m} , \, g_{m} \rangle_{L^{2}(\R^{m})} 
=
\int_{\R} \exp(t_{m}\la) \, \nu_{g}(d\la)
\ge
\exp\lp t_{m}  \int_{\R} \la \, \nu_{g}(d\la) \rp,
\end{equation*}
which yields
\begin{equation}\label{eq:low-bnd-Ttg-g}
\langle T_{m,t_{m}} g_{m} , \, g_{m} \rangle_{L^{2}(\R^{m})} 
\ge
\exp\lp t_{m}  \langle A_{m} g , \, g\rangle_{L^{2}(\R^{m})} \rp
=
\exp\lp t_{m}  K_{m}(g_{m}) \rp.
\end{equation}
Plugging this information into \eqref{eq:low-bnd-1-utx} and recalling that $g$ satisfies \eqref{eq:hyp-g}, we end up with the following inequality, valid for $m$ large enough
\begin{equation}\label{eq:low-bnd-2-utx}
\be \lc \lp u(t,0) \rp^{m}\rc
\ge
\frac{\exp\lp t_{m}  K_{m}(g_{m}) \rp}{\left(2\|g\|_\infty^{2} a \right)^{m}}
\ge
\frac{\exp\lp m t_{m}  \lp \ce_{H} - \ep \rp \rp}{\left(2\|g\|_\infty^{2} a \right)^{m}}\,. 
\end{equation}

\noindent
\emph{Step 2: Lower bound.}
Starting from \eqref{eq:low-bnd-2-utx}, the desired lower bound is now easily derived: taking into account the fact that $m t_{m}=m^{1+\frac{1}{H}} t$, we get (for $m$ large enough)
\begin{equation*}
\frac{1}{m^{1+\frac{1}{H}}} \, \log \lp\be \lc \lp u(t,0) \rp^{m}\rc\rp
\ge \lp \ce_{H} - \ep \rp t - \frac{\log\lp 2\|g\|_\infty^{2} a \rp}{m^{\frac{1}{H}}}.
\end{equation*}
Taking limits in $m$ and recalling that we have chosen an arbitrarily small $\ep$, we have proved that
\begin{equation*}
\liminf_{m\to\infty}\frac{1}{m^{1+\frac{1}{H}}} \, \log \lp\be \lc \lp u(t,0) \rp^{m}\rc\rp
\ge
\ce_{H} t,
\end{equation*}
which is corresponds to our claim.

\noindent
\emph{Step 3: Cut-off in the Feynman-Kac representation.} 
Let us go back to the Feynman-Kac representation of moments for $u(t,x)$ given by \eqref{y3}, and write the quantity $Q_{m}(t)$ therein as
\begin{equation*}
Q_{m}(t)
=
\sum_{1\le j <k \le m}
\int_0^t  \int_{\R}     e^{ i\xi  (B_j(s)- B_k(s))} \mu(d\xi) ds.
\end{equation*}
In order to get the upper bound part of our theorem, we shall replace the quantity $Q_m(t)$, with its diverging high frequency modes, by the quantity $Q_{m,M}^{1}(t)$ defined in \eqref{eq:def-QmM1}. To this aim, recall that $\ga_{M}^{1},\ga_{M}^{2},\mu_{M}^{1},\mu_{M}^{2}$ are defined in \eqref{eq:def-gaM1-gaM2}--\eqref{eq:def-mu1-mu2}, and notice that we have
\begin{equation}\label{eq:def-Q1-Q2-2}
Q_{m}(t) = Q_{m,M}^{1}(t) + Q_{m,M}^{2}(t),
\quad\text{with}\quad
Q_{m,M}^{l}(t) = \sum_{1\le j <k \le m} \int_0^t   \gamma_{M}^{l}(B_j(s)- B_k(s) )ds.
\end{equation}
Our cut-off procedure is now expressed in the following form. For two conjugate exponents $p,q>1$, H\"older's inequality yields
\begin{equation}\label{eq:dcp-moment-u}
\be \lc \lp u(t,0) \rp^{m}\rc
\le
\be_0^{1/p}\lc \exp\lp  \frac{p \, c_{H} \, Q_{m,M}^{1}(t_{m})}{m} \rp \rc
\be_0^{1/q}\lc \exp\lp  \frac{q \, c_{H} \, Q_{m,M}^{2}(t_{m})}{m} \rp \rc .
\end{equation}
We will prove that our study can be reduced to analysis of the term $Q_{m,M}^{1}$ by showing that for an arbitrary $q\ge 1$
\begin{equation}\label{eq:cutoff-negligible-remainder}
\lim_{M\to\infty}\limsup_{m\to\infty} 
\frac{1}{m^{1+\frac{1}{H}}} \log\lp \be_0^{1/q}\lc \exp\lp  \frac{q \, c_{H} \, Q_{m,M}^{2}(t_{m})}{m} \rp \rc  \rp
\le 0.
\end{equation}

\noindent
\emph{Step 4: Proof of \eqref{eq:cutoff-negligible-remainder}.} Let us generalize somehow our problem, and show that for any $\te>0$ we have
\begin{equation}\label{eq:log-lim-R-theta}
\lim_{M\to\infty}\limsup_{m\to\infty} 
\frac{1}{m^{1+\frac{1}{H}}} \log\lp R_{\te,M,m} \rp \le 0,
\quad\text{with}\quad
R_{\te,M,m} = 
\be_0\lc \exp\lp  \frac{\te \, Q_{m,M}^{2}(t_{m})}{m} \rp \rc.
\end{equation}
Also notice that
\begin{equation*}
Q_{m,M}^{2}(t_{m})
=
\frac{1}{2} \sum_{j=1}^{m} \sum_{k\ne j} \int_{0}^{t_{m}} \int_{\R}
e^{ i\xi  (B_j(s)- B_k(s))} \mu_{M}^{2}(d\xi) ds.
\end{equation*}
A rough bound on $R_{\te,M,m}$ is thus obtained by applying H\"older's inequality, similarly to \cite[Section 3]{ChPh}
\begin{eqnarray}\label{eq:bnd-RMm-1}
R_{\te,M,m} & \le &
\prod_{j=1}^{m} 
\be_0^{1/m}\lc \exp\lp  \frac{\te}{2} \sum_{k\ne j} \int_{0}^{t_{m}} \int_{\R}
e^{ i\xi  (B_j(s)- B_k(s))} \mu_{M}^{2}(d\xi) ds \rp \rc  \notag \\
&= &
\be_0\lc \exp\lp  \frac{\te}{2} \sum_{j=2}^{m} \int_{0}^{t_{m}} \int_{\R}
e^{ i\xi  (B_j(s)- B_1(s))} \mu_{M}^{2}(d\xi) ds \rp \rc.
\end{eqnarray}
We shall now prove that
\begin{equation}\label{eq:bnd-RMm-2}
R_{\te,M,m} \le 
\be_0^{m-1}\lc \exp\lp  \frac{\te}{2}  \int_{0}^{t_{m}} \int_{\R}
e^{ i\xi  B(s)} \mu_{M}^{2}(d\xi) ds \rp \rc.
\end{equation}
Indeed, a simple series expansion for the exponential function  in \eqref{eq:bnd-RMm-1} reveals that $R_{\te,M,m}\le \sum_{n\ge 0} \frac{\te^{n}}{2^{n}n!} A_{n}$, with
\begin{equation*}
A_{n}=
\sum_{j_{1},\ldots,j_{n}=2}^{m} \int_{[0,t_{m}]^{n}} \int_{\R^{n}}
\be_0\lc  e^{ i\xi \sum_{l=1}^{n} (B_{j_{l}}(s)- B_{1}(s))}  \rc \, \prod_{l=1}^{n} \mu_{M}^{2}(d\xi_{l}) \,ds_{l}.
\end{equation*}
In addition, observe that
\begin{equation*}
\be_0\lc  e^{ i\xi \sum_{l=1}^{n} (B_{j_{l}}(s)- B_{1}(s))}  \rc
=
\be_0\lc  e^{ i\xi \sum_{l=1}^{n} B_{j_{l}}(s)}  \rc
\be_0\lc  e^{- i\xi \sum_{l=1}^{n}  B_{1}(s)}  \rc.
\end{equation*}
Owing to the fact that $\be_0[  e^{ i\xi \sum_{l=1}^{n} B_{j_{l}}(s)}  ]\ge 0$ and $\be_0[  e^{- i\xi \sum_{l=1}^{n}  B_{1}(s)}  ]\in(0,1)$, we end up with
\begin{eqnarray*}
A_{n}&\le&
\sum_{j_{1},\ldots,j_{n}=2}^{m} \int_{[0,t_{m}]^{n}} \int_{\R^{n}}
\be_0\lc  e^{ i\xi \sum_{l=1}^{n} B_{j_{l}}(s)}  \rc \, \prod_{l=1}^{n} \mu_{M}^{2}(d\xi_{l}) \,ds_{l}  \\
&=&
\be_0\lc \lln \sum_{j=2}^{m}  \int_{0}^{t_{m}} \int_{\R}
e^{ i\xi  B_j(s)} \mu_{M}^{2}(d\xi) \, ds \rrn^{n}
\rc.
\end{eqnarray*}
Invoking the series expansion of the exponential function again, this last bound easily yields our claim~\eqref{eq:bnd-RMm-2}.

Starting from \eqref{eq:bnd-RMm-2}, we can prove \eqref{eq:cutoff-negligible-remainder}. To this aim, a direct consequence of \eqref{eq:bnd-RMm-2} is the following inequality
\begin{equation}\label{eq:bnd-RMm-3}
\frac{1}{m^{1+\frac{1}{H}}} \log\lp R_{\te,M,m} \rp
\le 
\frac{1}{m^{\frac{1}{H}}} \log\lp \be_0\lc \exp\lp  \frac{\te}{2}  \int_{0}^{t_{m}} \int_{\R}
e^{ i\xi  B(s)} \mu_{M}^{2}(d\xi) ds \rp \rc \rp.
\end{equation}
Furthermore, some positivity arguments similar to the one we resorted to in Lemma \ref{lem:QmM-increase-M} show that
\begin{equation*}
\be_0\lc \exp\lp  \frac{\te}{2}  \int_{0}^{t_{m}} \int_{\R} e^{ i\xi  B(s)} \mu_{M}^{2}(d\xi) ds \rp \rc
\le
\be_0\lc \exp\lp  \frac{\te}{2}  \int_{0}^{[t_{m}]+1} \int_{\R} e^{ i\xi  B(s)} \mu_{M}^{2}(d\xi) ds \rp \rc,
\end{equation*}
where $[x]$ denotes the integer part of a real number $x$. Hence, applying successively Markov's property for $B$ and Lemma \ref{lem:monotone-init-cdt}, we get 
\begin{eqnarray*}
\be_0\lc \exp\lp  \frac{\te}{2}  \int_{0}^{t_{m}} \int_{\R} e^{ i\xi  B(s)} \mu_{M}^{2}(d\xi) ds \rp \rc
&\le&
\sup_{x\in\R} \be_{x}^{[t_{m}]+1}\lc 
\exp\lp  \frac{\te}{2}  \int_{0}^{1} \int_{\R} e^{ i\xi  B(s)} \mu_{M}^{2}(d\xi) ds \rp \rc  \\
&\le&
\be_{0}^{[t_{m}]+1}\lc 
\exp\lp  \frac{\te}{2}  \int_{0}^{1} \int_{\R} e^{ i\xi  B(s)} \mu_{M}^{2}(d\xi) ds \rp \rc .
\end{eqnarray*}
Plugging this inequality into  \eqref{eq:bnd-RMm-3} yields
\begin{eqnarray*}
\frac{1}{m^{1+\frac{1}{H}}} \log\lp R_{\te,M,m} \rp
&\le&
\frac{[t_{m}]+1}{m^{1+\frac{1}{H}}}
\log\lp \be_0\lc 
\exp\lp  \frac{\te}{2}  \int_{0}^{1} \int_{\R} e^{ i\xi  B(s)} \mu_{M}^{2}(d\xi) ds \rp \rc \rp  \\
&\le&
2 t \,
\log\lp \be_0\lc 
\exp\lp  \frac{\te}{2}  \int_{0}^{1} \int_{\R} e^{ i\xi  B(s)} \mu_{M}^{2}(d\xi) ds \rp \rc \rp .
\end{eqnarray*}
Summarizing, we have obtained that
\begin{equation*}
\lim_{M\to\infty}\limsup_{m\to\infty} 
\frac{1}{m^{1+\frac{1}{H}}} \log\lp R_{\te,M,m} \rp
\le
\lim_{M\to\infty} 
\log\lp \be_0\lc 
\exp\lp  \frac{\te}{2}  \int_{0}^{1} \int_{\R} e^{ i\xi  B(s)} \mu_{M}^{2}(d\xi) ds \rp \rc \rp = 0,
\end{equation*}
where the last relation can be seen by means of some elementary computations, very similar to the ones displayed in \cite[p. 50]{HHLNT}. This finishes the proof of \eqref{eq:cutoff-negligible-remainder}.

\noindent
\emph{Step 5: An expression with diagonal terms.}
Let us now focus on the behavior of the quantity $Q_{m,M}^{1}(t)$ defined by \eqref{eq:def-Q1-Q2-2}. Specifically, having \eqref{eq:def-Q1-Q2-2} and \eqref{eq:dcp-moment-u} in mind, we wish to find the asymptotic behavior of
\begin{equation*}
\ca_{m,M}^{1}(\te) \equiv
\be_0\lc  \exp\lp \frac{\te Q_{m,M}^{1}(t_{m})}{m} \rp \rc ,
\end{equation*}
for a given parameter $\te>0$.
We wish to reduce this asymptotic study to an evaluation involving Feynman-Kac semigroups. A first step in this direction is to realize that, since the cut-off measure $\mu_{M}^{1}$ is now finite, the asymptotic behavior of $m\mapsto\be[(u(t,x))^{m}]$ is not perturbed by adding the diagonal terms corresponding to $j=k$ in the sum defining $Q_{m,M}^{1}(t)$. That is, one can replace $Q_{m,M}^{1}(t)$ by
\begin{equation}\label{eq:def-hat-Q-1}
\hat{Q}_{m,M}^{1}(t) =  \sum_{j,k=1}^{m}
\int_0^t  \int_{\R}     e^{ i\xi  (B_j(s)- B_k(s))} \mu_{M}^{1}(d\xi) ds
=
\int_0^t  \int_{\R} \lln h_{B}(s,\xi)\rrn^{2} \mu_{M}^{1}(d\xi) ds,
\end{equation}
where we have set $h_{B}(s,\xi)= \sum_{j=1}^{m} e^{ i\xi  B_j(s)}$.

Indeed, let us recast Definition \eqref{eq:def-QmM1} as 
\begin{equation*}
Q_{m,M}^{1}(t) = \sum_{1\le j <k \le m} q_{m,M}^{jk}(t),
\quad\text{where}\quad
q_{m,M}^{jk}(t) = \int_0^t  \int_{\R}     e^{ i\xi  (B_j(s)- B_k(s))} \mu_{M}^{1}(d\xi) ds.
\end{equation*}
Now it is readily checked that the diagonal terms $q_{m,M}^{jj}$ can be expressed as
\begin{equation*}
q_{m,M}^{jj}(t)
=
\iot \int_{-M}^{M} |\xi|^{1-2H} d\xi \, ds
=
c_{1,H} t M^{2-2H}.
\end{equation*}
Thus, recalling the definition \eqref{eq:def-hat-Q-1} of $\hat{Q}_{m,M}^{1}(t)=\sum_{j,k=1}^{m} q_{m,M}^{jk}(t)$, we have
\begin{equation}\label{eq:def-hat-cal-AmM}
\hat{\ca}_{m,M}^{1}(\te)
:=
 \be_{0}\lc\exp\lp\frac{\te \, \hat{Q}_{m,M}^{1}(t_{m})}{m}\rp\rc
=
\exp\lp \te  t   M^{2-2H} m^{\frac{1}{H}} \rp
\ca_{m,M}^{1}(2\te),
\end{equation}
where we have replaced $\te$ by $2\te$ as a parameter of $\ca_{m,M}^{1}$, due to repetitions of off diagonal terms in the definition of $\hat{Q}_{m,M}^{1}$.
This easily entails that
\begin{equation}\label{eq:equal-AmM-hat-AMm}
\limsup_{m\to\infty} 
m^{-\frac{1+H}{H}}\log \lp \ca_{m,M}^{1}(\te) \rp
=
\limsup_{m\to\infty} 
m^{-\frac{1+H}{H}}\log \lp \hat{\ca}_{m,M}^{1} \rp.
\end{equation}
Recalling once again our formula \eqref{y3}, we now focus on the evaluation of $\hat{\ca}_{m,M}^{1}$.

\noindent
\emph{Step 6: A coarse graining procedure.}
Thanks to relation \eqref{eq:equal-AmM-hat-AMm}, our problem is reduced to a Feynman-Kac asymptotics for the semi-group related to $\hat{\ca}_{m,M}^{1}$. However, the semi-group $\hat{T}_{m,M,t}$ is considered in Proposition \ref{prop:fk-self-adjoint-truncated} with $m,M$ fixed and $t\to\infty$. In contrast, we consider here a situation where both $m$ and $t_{m}$ are going to $\infty$. We solve this problem by a coarse graining type procedure which is described below.

To this aim, consider a fixed $\rho\in\N$ and let us decompose $m$ as $m=n\rho +r$ with $n,r\in\N$ and $0\le r \le n-1$. We can write
\begin{equation*}
\hat{Q}_{m,M}^{1}(t)) = 
\int_{0}^{t}  \int_{\R} \lln \sum_{j=0}^{n} \sum_{k=0}^{\rho_{j}-1} 
e^{ i\xi  B_{j\rho+k}(s)} \rrn^{2} \mu_{M}^{1}(d\xi) ds,
\end{equation*}
where $\rho_{j}=\rho$ for $0\le j\le n-1$ and $\rho_{n}=r+1$. Then invoke the elementary relation $|\sum_{j=0}^{n} z_{j}|^{2} \le (n+1) \sum_{j=0}^{n} |z_{j}|^{2}$, valid on $\C$, to get
\begin{equation*}
\hat{Q}_{m,M}^{1}(t)) \le (n+1)  \sum_{j=0}^{n}
\int_{0}^{t}  \int_{\R} \lln  \sum_{k=0}^{\rho_{j}-1} 
e^{ i\xi  B_{j\rho+k}(s)} \rrn^{2} \mu_{M}^{1}(d\xi) ds\,. 
\end{equation*}
Thanks to the independence of the Brownian motions and the definition \eqref{eq:def-hat-cal-AmM} of $\hat{\ca}_{m,M}^{1}$, we thus obtain
\begin{equation}\label{eq:dcp-I-R}
\hat{\ca}_{m,M}^{1}
=\be_{0}\lc\exp\lp\frac{\te \hat{Q}_{m,M}^{1}(t_{m})}{m}\rp\rc  
\le
\lln \hat{I}_{\rho,M}(t_{m}) \rrn^{n} \, \hat{R}_{m,M}(t_{m}),
\end{equation}
where we set $\te_{\rho}=\frac{\te (n+1)}{m}$ and
\begin{equation*}
\hat{I}_{\rho,M}(t_{m})
=
\be_0\lc \exp\lp \te_{\rho} 
\int_{0}^{t_{m}}  \int_{\R} \lln  \sum_{k=1}^{\rho}  e^{ i\xi  B_{k}(s)} \rrn^{2} \mu_{M}^{1}(d\xi) ds \rp  \rc .
\end{equation*}
In relation \eqref{eq:dcp-I-R}, the remainder term $\hat{R}_{m,M}(t_{m})$ is defined by
\begin{equation*}
\hat{R}_{m,M}(t_{m})
=
\be_0\lc \exp\lp \te_{\rho}
\int_{0}^{t_{m}}  \int_{\R} \lln  \sum_{k=1}^{r}  e^{ i\xi  B_{k}(s)} \rrn^{2} \mu_{M}^{1}(d\xi) ds \rp  \rc,
\end{equation*}
and notice that we also have
\begin{equation}\label{eq:lim-theta-rho}
\lln  \te_{\rho}- \frac{\te}{\rho} \rrn
\le \frac{\te}{\rho \, n} .
\end{equation}

Let us now evaluate the term $\hat{I}_{\rho,M}(t_{m})$ above. To this aim, we linearize our expression again and write
\begin{equation*}
\int_{0}^{t}  \int_{\R} \lln  \sum_{k=1}^{\rho}  e^{ i\xi  B_{k}(s)} \rrn^{2} \mu_{M}^{1}(d\xi) ds
=
\sum_{j,k=1}^{\rho} \int_{0}^{t} \ga_{M}^{1}\lp B_{j}(s) - B_{k}(s) \rp \, ds.
\end{equation*}
Related to this expression, consider the semi-group $\htt_{\rho,M,t}$ defined by \eqref{eq:def-semi-group-trunc}, for a parameter $\te=\te_{\rho}$. Then it is readily checked that $\hat{I}_{\rho,M}(t_{m})=\htt_{\rho,M,t}\1(0)$. We can now apply Proposition~\ref{prop:fk-self-adjoint-truncated} and relation \eqref{eq:lim-theta-rho}, which yields
\begin{equation}\label{eq:lim-InM}
\lim_{m\to\infty} \frac{\log\lp \hat{I}_{\rho,M}(t_{m})\rp}{t_{m}} 
=
\la_{\rho,M},
\end{equation}
where $\la_{\rho,M}$ is given by \eqref{eq:def-la-mM} with $\te$ replaced by $\frac{\te}{\rho}$. Along the same lines, one can also check that
\begin{equation}\label{eq:lim-RnM}
\lim_{m\to\infty} \frac{\log\lp \hat{R}_{m,M}(t_{m})\rp}{t_{m}} = 0.
\end{equation}

\noindent
\emph{Step 6: Conclusion for the upper bound.}
Let us go back to decomposition \eqref{eq:dcp-I-R} and invoke relation \eqref{eq:lim-RnM} in order to write
\begin{eqnarray*}
\frac{1}{m^{1+\frac{1}{H}}} \log\lp \hat{\ca}_{m,M}^{1}(\te) \rp
&\le&
\frac{n}{m^{1+\frac{1}{H}}} \log\lp \hat{I}_{\rho,M}(t_{m}) \rp + \ep_{m,M} \\
&=& 
\lp \frac{t}{\rho} \rp \lp\frac{m-r}{m} \rp  \frac{1}{t_{m}} \ \log\lp \hat{I}_{\rho,M}(t_{m}) \rp + \ep_{m,M},
\end{eqnarray*}
where $\lim_{m\to\infty}\ep_{m,M} =0$. Plugging the limiting behavior \eqref{eq:lim-InM} into this inequality, we end up with
\begin{equation*}
\limsup_{m\to\infty} \frac{1}{m^{1+\frac{1}{H}}} \log\lp \hat{\ca}_{m,M}^{1}(\te) \rp
\le
t \, \frac{\la_{\rho,M}}{\rho}.
\end{equation*}
This bound is valid for any $\rho\in\N$, and a small variation of Proposition \ref{prop1} (replacing the distribution $\ga$ by its smoothed version $\ga_{M}^{1}$ and summing over diagonal terms) asserts that for all $M > 0$ there exists a quantity $\la_{M}(\te)$ verifying:
\begin{equation*}
\lim_{\rho\to\infty} \frac{\la_{\rho,M}}{\rho} = \la_{M}(\te),
\quad\text{and}\quad
\lim_{M\to\infty}  = \la_{M}(\te) = \ce_{\te}.
\end{equation*}
This yields our upper bound taking into account \eqref{eq:cutoff-negligible-remainder}, the fact that we consider $\ca_{m,M}^{1}(\te)$ with $\te=\frac{p c_{H}}{2}$ and $p$ arbitrarily close to 1, plus relation \eqref{eq:equal-AmM-hat-AMm}.
\end{proof}

Using Proposition  Corollary 1.2.4 in \cite{Ch1} we deduce the following corollary.

\begin{corollary}\label{c.2.8}
Consider the solution $u$ of equation \eqref{eq:spde-pam}. Then the following tail estimate holds true
\begin{equation}\label{eq:tail-ut0}
\lim_{a\to\infty}a^{-(1+H) }\log\lp \PP\lp \log( u(t,x))\ge   a\rp \rp
=- \hat{c}_{H,t} ,
\end{equation}
where the constant $\hat{c}_{H,t}$ is defined by
\begin{equation*}
\hat{c}_{H,t} = \lc \frac{(1+H) \, c_{H}}{2} \lp \lp 1+ \frac{1}{H} \rp t \ce  \rp^{H} \rc^{-1}.
\end{equation*}
\end{corollary}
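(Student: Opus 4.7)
The approach is to invoke the Tauberian-type theorem from Chen's monograph (Corollary~1.2.4 in \cite{Ch1}), which upgrades a sharp moment asymptotic of the form $\log \be[e^{mX}] \sim \Lambda m^p$ (with $p>1$) to a sharp tail asymptotic of the form $\log \PP(X \ge a) \sim -\Lambda^{\ast} a^{p/(p-1)}$, where $\Lambda^{\ast}$ is obtained by Legendre inversion of $m \mapsto \Lambda m^p$. Setting $X := \log u(t,x)$, one has $\be[e^{mX}] = \be[u(t,x)^m]$, and Theorem~\ref{thm:asymptotic-moments-utx} reads
\begin{equation*}
\lim_{m\to\infty} \frac{\log \be[e^{mX}]}{m^{p}} = \Lambda,
\qquad p := 1+\frac{1}{H}, \quad \Lambda := \lp\tfrac{c_H}{2}\rp^{1/H} t\, \ce.
\end{equation*}
Moreover, Proposition~\ref{prop:exist-unique-FK}(ii) guarantees finiteness of all integer exponential moments of $X$, which together with the sharp asymptotic above supplies the hypotheses required by the cited theorem.

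Applying it yields
\begin{equation*}
\lim_{a\to\infty} a^{-(1+H)} \log \PP\lp X \ge a\rp = -\Lambda^{\ast},
\end{equation*}
where $1+H = p/(p-1)$ and $\Lambda^{\ast} = \lim_{a\to\infty} a^{-p/(p-1)}\sup_{m>0}(ma - \Lambda m^p)$. The supremum is attained at $m^{\ast}(a) = (a/(p\Lambda))^{1/(p-1)}$ with value $\frac{p-1}{p}(p\Lambda)^{-1/(p-1)} a^{p/(p-1)}$, hence $\Lambda^{\ast} = \frac{p-1}{p}(p\Lambda)^{-1/(p-1)}$. Substituting $p-1 = 1/H$, $(p-1)/p = 1/(1+H)$, $1/(p-1) = H$, and
\[
(p\Lambda)^H = \lp 1+\tfrac{1}{H}\rp^H \tfrac{c_H}{2}\, \ce^H\, t^H,
\]
we arrive at
\begin{equation*}
\Lambda^{\ast} = \lc (1+H)\, \tfrac{c_H}{2}\, \lp\lp 1+\tfrac{1}{H}\rp t\, \ce\rp^H \rc^{-1} = \hat{c}_{H,t},
\end{equation*}
which matches the constant in \eqref{eq:tail-ut0}.

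The main difficulty is not in the Legendre computation but in cleanly matching the hypotheses of the abstract Tauberian result to our concrete setting. The upper bound $\log \PP(X\ge a) \le -(1-o(1))\hat{c}_{H,t} a^{1+H}$ follows easily from the exponential Chebyshev inequality $\PP(X\ge a)\le e^{-ma}\be[e^{mX}]$ optimized over integer $m$ close to $m^{\ast}(a)$, combined with the upper-bound half of Theorem~\ref{thm:asymptotic-moments-utx}. The matching lower bound is the substantive content of Corollary~1.2.4 in \cite{Ch1}, whose proof typically proceeds by a change-of-measure argument exploiting the sharpness of the moment asymptotic; this is precisely the reason why Theorem~\ref{thm:asymptotic-moments-utx} had to be established with the correct prefactor $\Lambda$ and not merely the correct rate $m^{1+1/H}$.
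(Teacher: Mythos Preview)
Your approach is essentially the paper's: invoke the Tauberian result from \cite{Ch1} to pass from the moment asymptotic of Theorem~\ref{thm:asymptotic-moments-utx} to the tail asymptotic, and your Legendre computation producing $\hat{c}_{H,t}$ is correct.

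There is one technical point the paper works through that you only gesture at. The abstract result in \cite{Ch1} (the paper actually applies Theorem~1.2.3 there) is stated for nonnegative random variables, and moreover the moment limit \eqref{eq:asymptotic-moments-utx} is only available for nonnegative exponents, so a direct Gärtner--Ellis argument is blocked. Since $X=\log u(t,x)$ may be negative, the paper first extends \eqref{eq:asymptotic-moments-utx} to real positive exponents via Hölder interpolation, then conditions on the event $\{X\ge 0\}$ (which has positive, fixed probability) to obtain a genuinely nonnegative random variable satisfying the same moment asymptotic, applies the Tauberian theorem there, and finally removes the conditioning at no asymptotic cost. You acknowledge that ``cleanly matching the hypotheses'' is the real work, but you do not carry it out; this conditioning step is exactly what is needed to close the gap.
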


\begin{remark}\label{rmk-cHt-qHt}
Denote by $q_{H,t}$ the quantity showing up in the right hand side of equation \eqref{eq:as-limit-in space}. Then we have $\hat{c}_{H,t} = q_{H,t}^{-(1+H)}$.
\end{remark}

\begin{proof}[Proof of Corollary \ref{c.2.8}]
Recall that we have proved Theorem \ref{thm:asymptotic-moments-utx}. Furthermore, some simple arguments based on H\"older's inequality allow us  to extend this result to real valued powers. Namely, the limit in \eqref{eq:asymptotic-moments-utx} can be taken along positive real numbers instead of integers.

Let us thus consider a sequence of real numbers $(a_{n})_{n\ge 1}$ converging to $\infty$. Related to this sequence, we also introduce the sequence of random variables $Y_{n}=a_{n}^{-1/H}\log(u(t,x))$, and the sequence $(\rho_{n})_{n\ge 1}$ with $\rho_{n}=a_{n}^{1+\frac{1}{H}}$. Then a direct application of \eqref{eq:asymptotic-moments-utx} yields, for an additional parameter $\beta\ge 0$
\begin{equation}\label{eq:lim-ellis-gartner}
\lim_{n\to\infty} \frac{1}{\rho_{n}} \log\lp \be\lc \exp\lp \beta \rho_{n} Y_{n} \rp \rc  \rp
= \laa(\beta),
\quad\text{where}\quad
\laa(\beta)\equiv \lp\frac{c_{H}}{2}\rp^{\frac{1}{H}} t \ce \beta^{1+\frac{1}{H}} .
\end{equation}
Notice that the above asymptotic result does not enable a direct application of Ellis-Gartner's theorem, since the limit in \eqref{eq:lim-ellis-gartner} is only obtained for $\beta\ge 0$ (while Ellis-Gartner would require limits for $\beta<0$ too). We will thus apply a large deviation theorem for positive random variables (Theorem 1.2.3 in \cite{Ch1}), which can be summarized as follows. Assume relation~\eqref{eq:lim-ellis-gartner} holds true for $\beta\ge 0$, and  define  $\laa^{*}$ as
\begin{equation}\label{eq:fenchel-adjoint}
\laa^{*}(\la)= \sup\lcl   \la \beta - \laa(\beta) \rcl.
\end{equation}
If $\laa^{*}$ is smooth and convex on $(0,\infty)$, and if $\lim_{\la\to\infty}\laa^{*}(\la)=\infty$, then the following tail estimate holds true for $\la>0$
\begin{equation}\label{eq:tail-ldp}
\lim_{n\to\infty} \frac{1}{\rho_{n}} \log\lp\bp\lp Y_{n} \ge \la  \rp\rp
= -\laa^{*}(\la).
\end{equation}
The application of the latter result raises 2 additional questions that we address now, similarly to what is done in \cite{Ch2}:

\noindent
\emph{(i)}
Our random variable $Y_{n}$ is not positive. However, observe that
\begin{equation*}
\text{sgn}(Y_{n}) = \text{sgn}(Y_{1}) = \text{sgn}\lp \log(u(t,x)) \rp,
\quad\text{and}\quad
\bp\lp Y_{1}>0 \rp >0.
\end{equation*}
We thus decompose the exponential moments of $Y_{n}$ into
\begin{align*}
&\lefteqn{\be\lc \exp\lp \beta \rho_{n} Y_{n} \rp \rc} \\
&=
\be\lc \exp\lp \beta \rho_{n} Y_{n} \rp \big| \, Y_{1} < 0 \rc \, \bp\lp Y_{1} < 0 \rp
+
\be\lc \exp\lp \beta \rho_{n} Y_{n} \rp \big| \, Y_{1} \ge 0 \rc \, \bp\lp Y_{1} \ge 0 \rp \\
&\le
1+ \be\lc \exp\lp \beta \rho_{n} Y_{n} \rp \big| \, Y_{1} \ge 0 \rc.
\end{align*}
Owing to this relation, plus our limiting result \eqref{eq:lim-ellis-gartner}, it is readily checked that
\begin{equation*}
\lim_{n\to\infty} \frac{1}{\rho_{n}} \log\lp \be\lc \exp\lp \beta \rho_{n} Y_{n} \rp \big| \, Y_{1} \ge 0 \rc  \rp
= \laa(\beta).
\end{equation*}
Furthermore, conditioned to the event $(Y_{1} \ge 0)$, the random variable $Y_{n}$ can be considered as positive.

\noindent
\emph{(ii)}
One can easily compute $\laa^{*}$ thanks to relation \eqref{eq:fenchel-adjoint}, and the reader can check that $\laa^{*}(\la) = \hat{c}_{H,t} \la^{1+H}$. In particular, $\laa^{*}$ is smooth and convex on $(0,\infty)$, and we also have $\lim_{\la\to\infty}\laa^{*}(\la)=\infty$.

Taking into account the last considerations, we have thus obtained a conditioned version of \eqref{eq:tail-ldp}, that is
\begin{equation*}
\lim_{n\to\infty} \frac{1}{\rho_{n}} \log\lp\bp\lp Y_{n} \ge \la \big| \, Y_{1} \ge 0  \rp\rp
= -\laa^{*}(\la).
\end{equation*}
One can then transform this relation into a nonconditioned one, owing to the fact that $\bp(Y_{1} \ge 0)$ is a fixed strictly positive quantity. Recalling the notation for $Y_{n}$ and $\rho_{n}$, we end up with
\begin{equation*}
\lim_{n\to\infty} \frac{1}{a_{n}^{1+\frac{1}{H}}} \log\lp\bp\lp \log( u(t,x)) \ge a_{n}^{\frac{1}{H}}\la  \rp\rp
= -\hat{c}_{H,t} \la^{1+H}.
\end{equation*}
Some elementary changes of variable now yield our claim \eqref{eq:tail-ut0}.
\end{proof}

\begin{remark}
Notice that the constant $\hat{c}_{H,t}$ appearing in (\ref{eq:tail-ut0}) is precisely $c_0(H)^{-\frac 1{1+H}} (t\mathcal{E})^{-H}$, where $c_0(H)$ is defined in (\ref{c0}).\end{remark}

\section{Proof of Theorem  \ref{thm:space-asymptotics}}\label{sec:proof}

In this section we start from the tail behavior for the random variable $\log( u(t,x))$ provided by \eqref{eq:tail-ut0}. We carry out a localization and discretization procedure which will allow us to evaluate the growth of $x\mapsto u(t,x)$. 

\subsection{Proof of the lower bound by localization}

Our approach is based on a method (introduced in \cite{CJK} and \cite{CJKS}) involving localizations of the driving noise $\dot{W}$ in the space-time domain. Let us start by some elementary preliminaries (whose proofs are left to the reader) concerning the localizing function.

\begin{lemma}
Let $\ell$ be the function defined by:
\[
\ell (x)=\frac{1-\cos x}{\pi x^2}\,,\ \ x\in \RR\,,
\quad\text{or equivalently}\quad
\cf  \ell (\xi)=(1-|\xi|)I_{\left\{ |\xi|\le 1\right\}}\,, \quad\xi \in \RR\,.
\]
For any $\beta>0$ define $\ell_\beta(x)=\beta \ell(\beta x)$.
Then the Fourier transform of $\ell_\beta(x)$ is given by:
\begin{equation} \label{b1}
\cf  \ell _\beta (\xi)=\left(1-\frac{|\xi|}{\beta} \right)I_{\left\{ |\xi|\le \beta\right\}}\,, \quad\xi \in \RR\,.
\end{equation}
\end{lemma}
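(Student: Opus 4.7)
This is a standard Fourier analysis fact (the function $\ell$ is essentially the Fejér kernel), so the plan is just a direct verification in two steps.

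First, I would establish the pair $\ell \leftrightarrow (1-|\xi|)\mathbf{1}_{\{|\xi|\le 1\}}$ by computing the inverse Fourier transform of the triangular function and appealing to Fourier inversion. Concretely, using $\mathcal{F}^{-1}g(x)=\frac{1}{2\pi}\int e^{i\xi x}g(\xi)\,d\xi$ and the evenness of the triangular function, one gets
\[
\mathcal{F}^{-1}\!\bigl[(1-|\xi|)\mathbf{1}_{\{|\xi|\le 1\}}\bigr](x)
=\frac{1}{\pi}\int_0^1 (1-\xi)\cos(\xi x)\,d\xi,
\]
which after one integration by parts collapses to $(1-\cos x)/(\pi x^2)=\ell(x)$. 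Since $\ell\in L^{1}(\R)$ and the triangular function is continuous and compactly supported, Fourier inversion applies and gives the equivalent characterization $\mathcal{F}\ell(\xi)=(1-|\xi|)\mathbf{1}_{\{|\xi|\le 1\}}$.

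Second, the claim \eqref{b1} follows immediately from the scaling law for the Fourier transform. Writing out the definition and making the substitution $y=\beta x$,
\[
\mathcal{F}\ell_\beta(\xi)
=\int_{\R} e^{-i\xi x}\beta\,\ell(\beta x)\,dx
=\int_{\R} e^{-i(\xi/\beta)y}\ell(y)\,dy
=\mathcal{F}\ell(\xi/\beta),
\]
so that $\mathcal{F}\ell_\beta(\xi)=(1-|\xi|/\beta)\mathbf{1}_{\{|\xi|\le\beta\}}$, as claimed.

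There is no real obstacle: the only place where one needs to be a bit careful is checking that the boundary term in the integration by parts vanishes (it does, because the factor $(1-\xi)$ kills the contribution at $\xi=1$ and $\sin(0)=0$ kills the contribution at $\xi=0$), and ensuring the inversion step is justified—both trivial given the explicit form of the functions involved.
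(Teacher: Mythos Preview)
Your proof is correct and complete; the paper itself does not supply a proof of this lemma, explicitly stating that its ``proofs are left to the reader.'' Your direct verification via the inverse Fourier transform of the triangular function followed by the scaling substitution $y=\beta x$ is exactly the standard argument one would expect here, so there is nothing to compare against.
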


We now give a representation of the noise $\dot{W}$ as a convolution of a certain kernel with respect to a space-time white noise. 

\begin{lemma}
Let $\tilde{\gamma}$ be the distribution defined by
\begin{equation*}
\tilde{\gamma}(x)= \lp \frac{c_H}{2\pi}\rp^{1/2}  \int_{\R} e^{i\xi x } |\xi| ^{\frac 12-H} d\xi
=\lp  \frac {c_H} {2\pi} \rp^{1/2} \cf\lp |\xi| ^{\frac 12-H}\rp.
\end{equation*}
Then the Gaussian field $\{W(t, \phi), t\ge 0 , \phi \in \mathcal{S}(\R)\}$  introduced in  (\ref{notation}) can be represented as
\begin{equation}\label{eq:W-representation-white-noise}
W(t, \phi) =\int_0^t \int_\RR ( \phi*\tilde{\gamma})(x) \hat{W}(ds,dx),
\end{equation}
where $\hat{W}$ is a standard space-time white noise on $\R^{2}$.
\end{lemma}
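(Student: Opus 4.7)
My plan is to check the representation by matching covariances: both sides are centered Gaussian, so it suffices to verify that the right-hand side has the covariance structure \eqref{eq:cov1}. Call the right-hand side $U(t,\phi)$. Since $\hat{W}$ is a standard space-time white noise on $\R^{2}$, It\^o's isometry gives
\begin{equation*}
\EE\lc U(t,\phi) \, U(s,\psi) \rc
= (s\wedge t) \int_{\R} (\phi * \tilde{\gamma})(x) \, \overline{(\psi * \tilde{\gamma})(x)} \, dx,
\end{equation*}
provided we first verify that $\phi * \tilde{\gamma} \in L^{2}(\R)$ for $\phi \in \mathcal{S}(\R)$ (and more generally for $\phi \in \ch$ restricted to a single time slice).

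The core computation is then a Fourier/Plancherel argument. With the paper's convention $\cf g(\xi)=\int e^{-i\xi x}g(x)\,dx$, the inversion formula $\cf \circ \cf g(\xi)= 2\pi\, g(-\xi)$ together with the symmetry of $|\cdot|^{1/2-H}$ yields
\begin{equation*}
\cf \tilde{\gamma}(\xi)
= \lp\frac{c_{H}}{2\pi}\rp^{1/2} \cf\bigl(\cf(|\cdot|^{1/2-H})\bigr)(\xi)
= (2\pi c_{H})^{1/2} |\xi|^{1/2-H}.
\end{equation*}
Applying Plancherel's identity and the convolution-product rule then gives
\begin{equation*}
\int_{\R}(\phi*\tilde{\gamma})(x)\,\overline{(\psi*\tilde{\gamma})(x)}\,dx
= \frac{1}{2\pi}\int_{\R} \cf\phi(\xi)\,\overline{\cf\psi(\xi)}\,\lln\cf\tilde{\gamma}(\xi)\rrn^{2}\,d\xi
= c_{H}\int_{\R}\cf\phi(\xi)\,\overline{\cf\psi(\xi)}\,|\xi|^{1-2H}\,d\xi,
\end{equation*}
which upon integration against $ds$ is exactly \eqref{eq:cov1}. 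Thus $U$ and $W$ have identical covariance, so we obtain equality in law of the two centered Gaussian fields $\{U(t,\phi)\}$ and $\{W(t,\phi)\}$. For a same-space representation (not merely equality in law), one defines $\hat{W}$ from $W$ via the isometry $\phi \mapsto \phi*\tilde{\gamma}$ between $\ch$ and (a subspace of) $L^{2}(\R_{+}\times\R)$: setting $\hat{W}(f):=W(\phi_{f})$ where $\cf\phi_{f}(\xi)= \cf f(\xi)/((2\pi c_{H})^{1/2}|\xi|^{1/2-H})$ produces the desired white noise, and \eqref{eq:W-representation-white-noise} then holds pointwise.

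The main obstacle is the technical one of interpreting $\phi*\tilde{\gamma}$ rigorously, since $\tilde{\gamma}$ is a tempered distribution rather than a function: one must argue that for $\phi\in\mathcal{S}(\R)$ the product $\cf\phi(\xi)\,|\xi|^{1/2-H}$ lies in $L^{2}(\R)$ (which it does, thanks to rapid decay of $\cf\phi$ at infinity and the bound $|\xi|^{1-2H}$ being locally integrable near $0$ because $H<1/2$), and define $\phi*\tilde{\gamma}$ as the $L^{2}$-function whose Fourier transform is $\cf\phi(\xi)\cdot(2\pi c_{H})^{1/2}|\xi|^{1/2-H}$. With that convention in place, the Plancherel step above is rigorous and the proof concludes immediately.
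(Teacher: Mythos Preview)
Your proposal is correct and follows essentially the same approach as the paper: compute the covariance of the white-noise integral via It\^o isometry, pass to Fourier coordinates by Plancherel and the convolution--product rule, and recover \eqref{eq:cov1}. Your additional remarks on the rigorous interpretation of $\phi*\tilde{\gamma}$ and on upgrading equality in law to a same-probability-space identity are sound refinements that the paper leaves implicit.
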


\begin{proof}
It is easily checked that
$\tilde{\gamma} *\tilde{\gamma}= c_H\gamma$ 
is the spatial covariance of the fractional Brownian sheet $W$. In fact,
\[
\tilde{\gamma} *\tilde{\gamma}
= \frac {c_H} { 2\pi}  \mathcal{F}  ( |\xi| ^{\frac 12-H} )* \mathcal{F}  ( |\xi| ^{\frac 12-H} )
= c_H\mathcal{F} \mu =c_H \gamma.
\]
Let now $W$ be the Gaussian field given by \eqref{eq:W-representation-white-noise}.
Then for any $s,t \ge 0$ and $ \phi, \psi \in \mathcal{S}(\R)$, we can write:
\begin{eqnarray}
\EE [ W(t,\phi) W(s, \psi)]&=&(s\wedge t) \langle \phi *\tilde{\gamma}, \psi *\tilde{\gamma}\rangle_{L^2(\R)} 
= (2\pi)^{-1} (s\wedge t)\langle \mathcal{F}(\phi*\tilde{\gamma}), \mathcal{F}(\psi*\tilde{\gamma}) \rangle_{L^2(\R)} \nonumber\\
 &=& (2\pi) ^{-1} (s\wedge t) \langle \mathcal{F}\phi \mathcal{F}\tilde{\gamma}, \mathcal{F}\psi\mathcal{F}\tilde{\gamma} \rangle_{L^2(\R)} \nonumber \\  \label{b2}
&=&c_H (s\wedge t) \int_{\R} \mathcal{F}\phi(\xi) \overline{ \mathcal{F}\psi(\xi)} \mu(d\xi),
\end{eqnarray}
which corresponds to expression \eqref{eq:cov1}.
\end{proof}

We now turn to a description of the localized approximation of $u$ which will be used in the sequel. For this step, we fix a parameter $\beta\ge 1$ and consider  the  approximation $\left\{W_\beta(t, \phi)\,, \phi\in \cs (\RR)\right\}$ of the fractional Brownian field  $W$  defined by
\begin{equation}  \label{b1a}
W_\beta(t,\phi)
=\int_0^t\int_\RR(  [(\mathcal{F}\ell_\beta)\tilde{\gamma}]*\phi)(x)      \hat{W}(ds,dx).
\end{equation}  
From (\ref{b1a}) we obtain the following expression for the covariance function of the random field $\{W_{\beta}(t, \phi), t\ge 0 , \phi \in \mathcal{S}(\R)\}$:
\begin{equation}
\EE\left[W_\beta(t,\phi)W_\beta(t,\psi)\right]
= tc_H  \int_\RR   (\ell_\beta * |\cdot|^{\frac 12-H})^2 (\xi) \mathcal{F}\phi(\xi) \overline{ \mathcal{F}\psi(\xi)} d\xi\,.
\end{equation}
Notice that $\ell_\beta$ is an approximation of the identity as $\beta$ tends to infinity, so that $W_{\beta}$ has to be seen as an approximation of $W$. On the other hand, the  spatial covariance  of the noise $W_\beta$, given by  $(\mathcal{F}\ell_\beta)\tilde{\gamma} *(\mathcal{F}\ell_\beta)\tilde{\gamma}$, has compact support. In this sense we call it \emph{localized}.

\begin{remark}
We have followed the notation of \cite{CJK} for our localization step. However let us stress the fact that, though the localization is made through the Fourier transform of $\ell_{\beta}$, it is a localization in \emph{direct} spatial coordinates.
\end{remark}

Having the approximation \eqref{b1a} in hand, we can now define the following Picard approximation of the solution $u$ to equation \eqref{eq:spde-pam}. Namely, we set $U_{\beta, 0}(t,x)=1$ and for $n\ge 1$, we define
\begin{equation}\label{eq:recursion-Un}
U_{\beta, n+1}(t,x)=1 +\int_0^t \int_{x-\beta  \sqrt t}^{x+\beta \sqrt t}
p_{t-s}(y-x) U_{\beta, n}(s,y) W_\beta(ds,dy).
\end{equation}
The next result states the independence of the random variables  $ \{ U_{\beta, n}(t,x_i), i\ge 1 \}$ if the points  $x_i$'s are far enough from each other. This property is crucial in order to establish our almost sure spatial behavior.

\begin{lemma}\label{indep} 
Choose any fixed $\beta\ge 1$ and let $n=[\log \beta]+1$.  Then for any sequence of points
$\{ x_1<x_2< \cdots\}$   such that $x_{i+1}-x_i>2n \beta (1+\sqrt t)$ for each  $i\ge 1$, 
the random variables $\{U_{\beta, n}(t,x_i)  , i\ge 1\} $, defined by \eqref{eq:recursion-Un},
are independent.  
\end{lemma}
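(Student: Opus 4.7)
My plan is to prove the independence by tracking which region of the underlying space-time white noise $\hat{W}$ each iterate $U_{\beta,n}(t,x)$ actually depends on, and then showing those regions are disjoint under the spacing hypothesis on the points $x_i$.

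The first step will be to establish a support property for the noise $W_\beta$. From its representation \eqref{b1a}, the driving kernel is $k = (\mathcal{F}\ell_\beta) \, \tilde{\gamma}$, which is a distribution in $x$-space supported in $[-\beta,\beta]$, since $(\mathcal{F}\ell_\beta)(x) = (1-|x|/\beta)\mathbf{1}_{\{|x|\le \beta\}}$ by \eqref{b1}. Consequently, for any space-time integrand $g(s,y)$ supported in $[0,t]\times[a,b]$, the stochastic integral against $W_\beta$ can be rewritten as an integral against $\hat{W}$ of $(k *_y g)(s,y')$, which is supported in $[0,t]\times[a-\beta,b+\beta]$. This is the only ``leakage'' introduced by using $W_\beta$ instead of $\hat{W}$.

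The main step is then an induction on $n$, showing the following claim:
\begin{equation*}
U_{\beta,n}(t,x) \text{ is measurable with respect to } \sigma\bigl(\hat{W}\text{ restricted to } [0,t]\times I_n(t,x)\bigr),
\end{equation*}
where $I_n(t,x) = [x-n\beta(1+\sqrt{t}),\, x+n\beta(1+\sqrt{t})]$. The base case $n=0$ is trivial since $U_{\beta,0}\equiv 1$. For the inductive step, the recursion \eqref{eq:recursion-Un} expresses $U_{\beta,n+1}(t,x)-1$ as the $W_\beta$-integral of the integrand $g(s,y)=p_{t-s}(y-x)U_{\beta,n}(s,y)\mathbf{1}_{[x-\beta\sqrt{t},\,x+\beta\sqrt{t}]}(y)$. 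The indicator confines $y$ to a window of half-width $\beta\sqrt{t}$ around $x$; by the inductive hypothesis, for each such $y$, the random factor $U_{\beta,n}(s,y)$ is measurable with respect to $\hat{W}$ restricted to spatial window of half-width $n\beta(1+\sqrt{s})\le n\beta(1+\sqrt{t})$ around $y$. Taking the union as $y$ varies gives a spatial window of half-width $\beta\sqrt{t}+n\beta(1+\sqrt{t})$ around $x$. Finally, converting the outer $W_\beta$-integral to an $\hat{W}$-integral expands the spatial support by an additional $\beta$, producing total half-width $\beta\sqrt{t}+n\beta(1+\sqrt{t})+\beta=(n+1)\beta(1+\sqrt{t})$, which is exactly $I_{n+1}(t,x)$.

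With the claim in hand, under the hypothesis $x_{i+1}-x_i > 2n\beta(1+\sqrt{t})$ the intervals $I_n(t,x_i)$ are pairwise disjoint, hence the random variables $U_{\beta,n}(t,x_i)$ are measurable with respect to independent portions of the space-time white noise $\hat{W}$, and are therefore independent. The slightly delicate point I would need to be careful with is the measurability of iterated stochastic integrals whose integrands are themselves random and non-adapted in the spatial variable; however this is handled by the standard construction of the It\^o-type integral developed in Section~\ref{sec:noise-structure} together with the fact that the kernel extension $k*_y(\cdot)$ is deterministic. The choice $n=\lfloor\log\beta\rfloor+1$ plays no role in this lemma; it is presumably calibrated for the quantitative $L^p$-approximation estimates of $U_{\beta,n}$ to $u$ that will be used in conjunction with the independence established here.
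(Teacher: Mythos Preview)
Your proof is correct and follows exactly the approach of \cite{CJKS}, Lemma~5.4, which the paper cites and whose proof it omits: track the spatial support of the iterated integrals by induction, using that the kernel $(\mathcal{F}\ell_\beta)\tilde{\gamma}$ is supported in $[-\beta,\beta]$ so each application of $W_\beta$ enlarges the relevant $\hat{W}$-window by $\beta$, while the truncation in \eqref{eq:recursion-Un} contributes $\beta\sqrt{t}$. Your arithmetic $\beta\sqrt{t}+n\beta(1+\sqrt{t})+\beta=(n+1)\beta(1+\sqrt{t})$ is exactly the induction that makes the intervals $I_n(t,x_i)$ disjoint under the stated spacing, and your remark that the particular choice $n=[\log\beta]+1$ is irrelevant here is also correct.
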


\begin{proof} The proof is exactly the same as the proof of Lemma 5.4 of \cite{CJKS}, and is omitted for sake of conciseness.
\end{proof}

Next let us recall the following elementary result borrowed from \cite{HHLNT}, which will help us to bound moments of iterated integrals.
\begin{lemma}\label{lem:intg-simplex}
For $m\ge 1$ let $\alpha \in (-1+\ep, 1)^m$  with $\ep>0$ and  set $|\alpha |= \sum_{i=1}^m
\alpha_i  $. For $t\in\ott$, the $m$-th  dimensional simplex over $\ot$ is denoted by
$S_m(t)=\{(r_1,r_2,\dots,r_m) \in \R^m: 0<r_1  <\cdots < r_m < t\}$.
Then there is a constant $c>0$ such that
\[
J_m(t, \alpha):=\int_{S_m(t)}\prod_{i=1}^m (r_i-r_{i-1})^{\alpha_i}
dr \le \frac { c^m t^{|\alpha|+m } }{ \Gamma(|\alpha|+m +1)},
\]
where by convention, $r_0 =0$.
\end{lemma}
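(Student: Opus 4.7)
The plan is to reduce $J_m(t,\al)$ to a classical Dirichlet-type integral by a triangular change of variables, compute it exactly via the Beta/Gamma identity, and then absorb the resulting product of Gamma factors into a constant raised to the power $m$ using the assumption $\al_i\in(-1+\ep,1)$.

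First, on the simplex $S_m(t)$ I would introduce the increments $s_i=r_i-r_{i-1}$ for $i=1,\dots,m$ (with the convention $r_0=0$). This transformation is a unimodular linear change of coordinates and maps $S_m(t)$ bijectively onto
\[
\Delta_m(t)=\lcl (s_1,\dots,s_m)\in\R_+^m:\ s_1+\cdots+s_m<t\rcl,
\]
so that
\[
J_m(t,\al)=\int_{\Delta_m(t)}\prod_{i=1}^m s_i^{\al_i}\,ds_1\cdots ds_m.
\]
Scaling $s_i=tu_i$ pulls a factor $t^{|\al|+m}$ out of the integral and leaves the integral over $\Delta_m(1)$.

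Next I would invoke the standard Dirichlet integral formula: for any exponents $\beta_i>-1$,
\[
\int_{\Delta_m(1)}\prod_{i=1}^m u_i^{\beta_i}\,du
=\frac{\prod_{i=1}^m\Gamma(\beta_i+1)}{\Gamma\big(\sum_{i=1}^m(\beta_i+1)+1\big)}.
\]
This identity is classical and can itself be proved by a quick induction on $m$ using the one-dimensional Beta integral $\int_0^1(1-u)^a u^b\,du=\Gamma(a+1)\Gamma(b+1)/\Gamma(a+b+2)$ at each step. Applying it with $\beta_i=\al_i$ (which is admissible because $\al_i>-1+\ep>-1$) gives the \emph{equality}
\[
J_m(t,\al)=\frac{\prod_{i=1}^m\Gamma(\al_i+1)}{\Gamma(|\al|+m+1)}\,t^{|\al|+m}.
\]

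Finally, I would control the numerator. Under the hypothesis $\al_i\in(-1+\ep,1)$ we have $\al_i+1\in(\ep,2)$, and the Gamma function is continuous on this compact interval; hence $c:=\sup_{y\in[\ep,2]}\Gamma(y)<\infty$ (a constant depending only on $\ep$), and $\prod_{i=1}^m\Gamma(\al_i+1)\le c^m$. This yields the announced bound
\[
J_m(t,\al)\le\frac{c^m\,t^{|\al|+m}}{\Gamma(|\al|+m+1)}.
\]
No step is really an obstacle; the only point worth checking carefully is that the change of variables is legitimate and that the lower bound $\al_i>-1+\ep$ (rather than merely $\al_i>-1$) is needed precisely to obtain a \emph{uniform} bound on $\Gamma(\al_i+1)$, which is what makes the constant $c$ independent of the sequence $\al$.
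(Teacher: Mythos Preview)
Your proof is correct and complete: the change to increment variables, the Dirichlet integral identity, and the uniform bound on $\Gamma(\al_i+1)$ over $[\ep,2]$ are all valid and yield exactly the stated inequality (in fact an equality before the final majorization). The paper itself does not prove this lemma but simply quotes it from \cite{HHLNT}; the argument there is essentially the one you give, so your approach matches the intended proof.
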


With these preliminaries in hand, we can now proceed to a first approximation result relating $u$, $U_{\beta}$ and $U_{\beta, n}$.

\begin{proposition}\label{thm:exist-uniq-chaos}
 For any $(t,x) \in \R_+\times \R$ and for any $p\ge 1$, the sequence    $U_{\beta, n}(t,x)$ defined by \eqref{eq:recursion-Un} converges in $L^p$ to a random variable
$U_{\beta}(t,x)$ as $n$ tends to infinity. Furthermore, we have the following estimates for the differences  of the solutions:   there is
a finite constant $C$ (dependent on $t$ but independent of $\beta$ and $p$) such that for any $\beta\ge 1$ and $p\ge 2$, 
\begin{equation}
\| u(t,x)-U_{\beta}(t,x) \|_{L^{p}(\oom)}
\le \beta^{-(\frac  12-H)}\exp\left( Cp^{\frac1{ H}}   \right) 
\label{e.beta-difference} 
\end{equation} 
and for two constants $c_{1},c_{2}>0$:
\begin{equation}
\| U_{\beta, n}(t,x) -U_{\beta}(t,x) \|_{L^{p}(\oom)}
\le  \frac{(c_{1} p^{1/2})^{n}}   {\Gamma\lp\frac{nH}{2} +1\rp} \, \exp\lp c_{2} p^{\frac{1}{H}}\rp\,. 
\label{e.n-difference} 
\end{equation} 
\end{proposition}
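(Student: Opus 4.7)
The plan is to establish all three statements through a single mechanism: Wiener chaos expansions for $u(t,x)$, $U_\beta(t,x)$ and $U_{\beta,n}(t,x)$, combined with Gaussian hypercontractivity and the simplex integral estimate of Lemma~\ref{lem:intg-simplex}. The multiplicative mild formulation \eqref{eq:mild-time-dep} (and its analog driven by $W_\beta$) leads, by iteration, to explicit chaos kernels built from towers of heat kernels; the recursion \eqref{eq:recursion-Un} then reproduces exactly the first $n$ chaos components of $U_\beta$, so that $U_\beta - U_{\beta,n}$ coincides with the chaos tail $\sum_{k>n} I_k^\beta(f_{k,t,x}^\beta)$. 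This identifies Picard convergence with summability of the chaos series and reduces \eqref{e.n-difference} to bounding that tail.

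\textbf{Step 1: Chaos expansions.} For each $(t,x)$, write
\begin{equation*}
u(t,x)=\sum_{k\ge 0}I_k(f_{k,t,x}),
\qquad
U_\beta(t,x)=\sum_{k\ge 0}I_k^\beta(f_{k,t,x}^\beta),
\end{equation*}
where $f_{k,t,x}$ is the symmetrization of
$p_{t-s_k}(x-y_k)p_{s_k-s_{k-1}}(y_k-y_{k-1})\cdots p_{s_2-s_1}(y_2-y_1)\mathbf 1_{\{s_1<\cdots<s_k<t\}}$
on $(\R_+\times\R)^k$, and $f_{k,t,x}^\beta$ has the same shape with the time variable trimmed to $s_k<t$ and $p_{t-s_k}$ replaced by the truncated heat kernel from \eqref{eq:recursion-Un}. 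The $L^p$ bounds will be reduced to $L^2$ via hypercontractivity,
\begin{equation*}
\|I_k(g)\|_{L^p}\le (p-1)^{k/2}\|I_k(g)\|_{L^2}=(p-1)^{k/2}\sqrt{k!}\,\|\widetilde g\|_{\mathcal H^{\otimes k}}.
\end{equation*}

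\textbf{Step 2: $L^2$ norm of a chaos kernel.} By Plancherel in the space variable, after changing to the telescopic variables $\eta_j=\xi_j+\cdots+\xi_k$ the Gaussian heat factors give $\prod_{j=1}^{k}\exp\bigl(-\eta_j^{2}(s_{j+1}-s_j)\bigr)$ (with $s_{k+1}=t$), so that
\begin{equation*}
\|f_{k,t,x}^\beta\|_{\mathcal H_\beta^{\otimes k}}^{2}
\le c^k\int_{S_k(t)}\prod_{i=1}^{k}(s_{i+1}-s_i)^{2H-1}\,ds,
\end{equation*}
the exponent $2H-1>-1$ being admissible precisely because $H>1/4$. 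Lemma~\ref{lem:intg-simplex} applied with $\alpha_i=2H-1$ yields $\|f_{k,t,x}^\beta\|_{\mathcal H_\beta^{\otimes k}}^{2}\le c^{k}t^{kH}/\Gamma(kH+1)$. Combined with hypercontractivity and Stirling, this gives
\begin{equation*}
\|I_k^\beta(f_{k,t,x}^\beta)\|_{L^p}\le\frac{(c_1 p^{1/2})^k}{\Gamma(kH/2+1)},
\end{equation*}
whose series is summable and of order $\exp(c_2 p^{1/H})$. Summing from $k=n+1$ to $\infty$, a standard ratio argument extracts a factor $(c_1 p^{1/2})^{n}/\Gamma(nH/2+1)$ with the remaining tail absorbed into $\exp(c_2 p^{1/H})$, proving \eqref{e.n-difference} and (as a corollary) the $L^p$ convergence $U_{\beta,n}\to U_\beta$.

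\textbf{Step 3: Difference $u-U_\beta$ (the main obstacle).} Regarding both $W$ and $W_\beta$ as integrals against a common space-time white noise $\hat W$ via \eqref{eq:W-representation-white-noise}--\eqref{b1a}, $u$ and $U_\beta$ admit joint chaos expansions whose $k$-th kernels differ only through the spectral multiplier $\prod_{i=1}^{k}\mathcal F\ell_\beta(\xi_i)$. The key telescoping
\begin{equation*}
1-\prod_{i=1}^{k}\mathcal F\ell_\beta(\xi_i)
=\sum_{i=1}^{k}\Bigl(\prod_{j<i}\mathcal F\ell_\beta(\xi_j)\Bigr)\bigl(1-\mathcal F\ell_\beta(\xi_i)\bigr)
\end{equation*}
localizes the mismatch to a single Fourier variable $\xi_i$, on which one uses $|1-\mathcal F\ell_\beta(\xi)|\le\min(|\xi|/\beta,1)$ together with the Gaussian damping $\exp(-\eta_j^2(s_{j+1}-s_j))$ to bound
\begin{equation*}
\int_\R\bigl(1-\mathcal F\ell_\beta(\xi)\bigr)^2|\xi|^{1-2H}\,\bigl(\text{heat factor}\bigr)\,d\xi
\le c\,\beta^{-(1-2H)}\int_\R|\xi|^{1-2H}\bigl(\text{heat factor}\bigr)\,d\xi,
\end{equation*}
at the cost of a combinatorial factor $k$ that is absorbed in $c^k$. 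Summing over $k$ with hypercontractivity as in Step~2 gives \eqref{e.beta-difference}. The delicate point is that the bound \emph{per chaos} carries only one power $\beta^{-(1-2H)/2}$ and not $k$ of them, which is precisely what the telescoping above achieves; extracting the sharp scaling $\beta^{-(1/2-H)}$ globally while keeping the subexponential factor $\exp(Cp^{1/H})$ unchanged is the crux of the argument.
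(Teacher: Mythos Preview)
Your overall architecture---chaos expansions, hypercontractivity, and the simplex bound of Lemma~\ref{lem:intg-simplex}---matches the paper's, and your Step~2 yields \eqref{e.n-difference} essentially as the paper does (one minor slip: the condition $H>\tfrac14$ enters not because $2H-1>-1$, but because after the change $\eta_j=\xi_{\sigma(j)}+\cdots+\xi_{\sigma(1)}$ and the bound $|\eta_i-\eta_{i-1}|^{1-2H}\le |\eta_i|^{1-2H}+|\eta_{i-1}|^{1-2H}$ the exponents $\alpha_i$ can reach $2(1-2H)$, so the time singularity is $(s_{i+1}-s_i)^{-(3-4H)/2}$ and integrability forces $4H>1$).

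Step~3, however, has two genuine gaps. First, you misread the spectral structure of $W_\beta$: in \eqref{b1a} the cutoff $\mathcal F\ell_\beta$ multiplies $\tilde\gamma$ \emph{in direct space}, so in Fourier space the square-root spectral density of $W_\beta$ is $(\ell_\beta * |\cdot|^{\frac12-H})(\xi)$, \emph{not} $|\xi|^{\frac12-H}\mathcal F\ell_\beta(\xi)$; this is exactly the point of the Remark following \eqref{b1a}. Your telescoping on $\prod_i\mathcal F\ell_\beta(\xi_i)$ and the bound $|1-\mathcal F\ell_\beta(\xi)|\le\min(|\xi|/\beta,1)$ therefore do not apply. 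The correct object to telescope is $\prod_i |\xi_i|^{\frac12-H}-\prod_i(\ell_\beta*|\cdot|^{\frac12-H})(\xi_i)$, and the key pointwise bound is $\big||\xi|^{\frac12-H}-(\ell_\beta*|\cdot|^{\frac12-H})(\xi)\big|\le c_{1,H}\,\beta^{-(\frac12-H)}$, which is what produces the factor in \eqref{e.beta-difference}. Second, and independently, you forget that the chaos kernels $f_{\beta,k}$ use the \emph{truncated} heat kernel $p^{(\beta)}(s,t;y)=p_{t-s}(y)\mathbf 1_{\{|y|\le\beta\sqrt t\}}$ at \emph{every} step of the tower, not just the last one; so even after the noise is corrected there remains the difference $I_k(f_k)-I_k(f_{\beta,k})$. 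The paper handles this separately (its term $A_1$) via the estimate $|\rho^{(\beta)}(s,t;\eta)-e^{-\frac12(t-s)\eta^2}|\le C|\eta|^{-\theta}(t-s)^{-\theta/2}e^{-\beta^2\theta/2}$ and a careful choice of $\theta$ on each frequency region; this part has no analogue in your sketch. Both corrections are needed before the sum over $k$ can be closed.
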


\begin{proof} The solution $u(t,x)$ admits a chaos expansion
(see, for instance,  formula (5.9) in \cite{HHLNT}).  Namely, we have 
\begin{equation}\label{eq:chaos-expansion-u(tx)}
u(t,x)=\sum_{n=0}^{\infty}I_n(f_n(\cdot,t,x))\,,
\end{equation}
where $f_0(t,x)=1$ and for any $n\ge 1$, 
\begin{equation}\label{eq:expression-fn}
f_n(s_1,x_1,\dots,s_n,x_n,t,x)
=\frac{1}{n!}p_{t-s_{\si(n)}}(x-x_{\si(n)})\cdots p_{s_{\si(2)}-s_{\si(1)}}(x_{\si(2)}-x_{\si(1)})\,.
\end{equation}
Here $\si$ denotes the permutation of $\{1,2,\dots,n\}$ such that $0<s_{\si(1)}<\cdots<s_{\si(n)}<t$
and $I_n$ is the multiple It\^o-Wiener integral with respect to the fractional Brownian field $W$.

The same kind of formula holds for $U_{\beta,n}$. Namely, denote by $p^{(\beta)}$ the kernel defined by
\[
p^{(\beta)}(s,t;  y)=p _{t-s}(y) \1_{\{|y|\le \beta \sqrt t\}}\,.
\]
Then for $n\ge 1$, one can recast formula \eqref{eq:recursion-Un} as
\[
U_{\beta, n+1}(t,x)=1+\int_0^t \int_\RR
p^{(\beta)} (s,t; y-x) U_{\beta, n}(s,y) W_\beta(ds,dy).
\]
By iteration, similarly to \cite{HHLNT} and \eqref{eq:chaos-expansion-u(tx)}, we have
\begin{equation}
\label{eq:chaos-expansion-u(txbetan)}
U_{\beta, n}(t,x) =\sum_{k=0}^{n}I_{\beta,k}(f_{\beta, k} (\cdot,t,x))\,,
\end{equation}
where $f_{\beta,0}(t,x) =1$ and for $k\ge 1$,
\begin{equation}\label{eq:expression-fn-2}
f_{\beta, k} (s_1,x_1,\dots,s_k,x_k,t,x)\\
=\frac{1}{k!}p^{(\beta)}(s_{\si(k)} ,t; x-x_{\si(k)})\cdots p^{(\beta)}(s_{\si(1)}, s_{\si(2)}; x_{\si(2)}-x_{\si(1)}).
\end{equation}
In the above expression, $I_{\beta, k}$ is the multiple It\^o-Wiener integral of order $k$ with respect to the Gaussian process
$W_\beta(t,x)$.  
We are going to show that the sequence $U_{\beta, n}(t,x)$ converges in $L^2$, and  defines a random field
$U_{\beta}(t,x)   $ given by 
\begin{equation}
\label{b4}
U_{\beta  }(t,x) =\sum_{n=0}^{\infty}I_{\beta,n}(f_{\beta, n} (\cdot,t,x)).
\end{equation}
On the other hand, we will also see that $U_{\beta  }(t,x) $  converges in $L^2$ to $u(t,x)$ as $\beta$ tends to infinity.
 In order to simplify the notation we will omit the dependence on $(t,x)$ in some of the terms of our computations.  The proof will be done in several steps.
 
\medskip
\noindent{\it Step 1: Estimates on a fixed chaos.} 
In this step we consider a fixed chaos $n$. Having expressions \eqref{eq:chaos-expansion-u(tx)} and\eqref{b4} in mind, we shall estimate the expectation $\EE[|   I_{n}(f_ n) -I_{\beta,n}(f_{\beta, n}) |^2]$. Let us start with the following decomposition:
\begin{eqnarray}\label{eq:def-A1-A2}
\EE[|   I_{n}(f_ n) -I_{\beta,n}(f_{\beta, n}) |^2]
&\le&  2\EE[| I_{n}(f_ n)-  I_{n}(f_{\beta, n})  |^2]+ 2 \EE[|   I_{n}(f_{\beta, n})-I_{\beta,n}(f_{\beta, n})|^2]  \notag \\
&=:&  2(A_1+A_2),  
\end{eqnarray}
and let us estimate $A_{1}$ and $A_{2}$ separately.

Consider  first the term $A_1$. Some elementary computation reveals that the Fourier transform of $f_n$ is given by
\[
\cf f_n(s_1,\xi_1,\dots,s_n,\xi_n)=
\frac{1}{n!}  { e^{-ix (\xi_{\sigma(n)}+ \cdots + \xi_{\sigma(1)})}}
\prod_{i=1}^n e^{-\frac{1}{2}(s_{\si(i+1)}-s_{\si(i)})|\xi_{\si(i)}+\cdots +
\xi_{\si(1)} |^2}   ,
\]
where we  have used the convention $s_{\si(n+1)}=t$. The same kind of computation can be performed for $f_{\beta,n}$. Specifically, let $\rho^{(\beta)} (s,t; \xi)$ be the Fourier transform of $p^{(\beta)}(s,t;x)$, namely,
\begin{equation}\label{eq:def-rho-beta}
\rho^{(\beta)} (s,t; \xi)=\frac{1}{\sqrt{2\pi(t-s)}}\int_{-\beta \sqrt t}^{\beta \sqrt t} e^{-ix\xi -\frac{x^2}{2(t-s)}} dx\,.
\end{equation}
Then the Fourier transform of $f_{\beta, n}$ is given  by
\begin{equation}\label{k1}
\cf f_{\beta, n} (s_1,\xi_1,\dots,s_n,\xi_n) =
\frac{1 }{n!} 
\prod_{i=1}^n\rho^{(\beta)}(s_{\si(i)}, s_{\si(i+1)};  \xi_{\si(i)}+\cdots +
\xi_{\si(1)} )  
{ e^{-ix (\xi_{\sigma(n)}+ \cdots + \xi_{\sigma(1)})}}.
\end{equation}
In the sequel, we also make use of the notation
\begin{equation}\label{eq:def-tilde-rho-j}
\tilde{\rho}^{(\beta)}_j  =    \rho^{(\beta)}(s_{\si(j)}, s_{\si(j+1)};  \xi_{\si(j)}+\cdots +
\xi_{\si(1)} ) -  e^{-\frac{1}{2}(s_{\si(j+1)}-s_{\si(ji)})|\xi_{\si(j)}+\cdots +
\xi_{\si(1)} |^2}\,.
\end{equation}
Now a straightforward application of Parseval's identity yields
\begin{align*}
A_1&= n! \|\cf f_n-\cf f_{\beta, n}\|^2_ {  L^2([0,t]^n\times \R^n, \lambda^n \times \mu^{n})}  \\
& =  \frac 1{ n!}   \int_{[0,t]^n}\int_{\RR^n}\bigg|
{ e^{-ix (\xi_{\sigma(n)}+ \cdots + \xi_{\sigma(1)})}}
\prod_{i=1}^n     \rho^{(\beta)}(s_{\si(i)}, s_{\si(i+1)};  \xi_{\si(i)}+\cdots +\xi_{\si(1)} )  
   \\
  & \hspace{1in}-  
{ e^{-ix (\xi_{\sigma(n)}+ \cdots + \xi_{\sigma(1)})}}
\prod_{i=1}^n e^{-\frac{1}{2}(s_{\si(i+1)}-s_{\si(i)})|\xi_{\si(i)}+\cdots +\xi_{\si(1)} |^2} 
\bigg|^2  \prod_{i=1}^n  |\xi_i |^{1-2H}   d\xi ds,
\end{align*}
where we have set $d\xi=d\xi_1 \cdots d\xi_n$,  $ds=ds_1\cdots ds_n$ and where $\lambda$ denotes the Lebesgue measure. 
Thus, using a telescoping sum argument,  we can write
\begin{equation}  \label{h9}
 A_1 \le  \frac n{ n!}   \sum_{j=1}^n A_{j,n},
\end{equation}
where we recall that we have defined $\tilde{\rho}_{j}^{(\beta)}$ in \eqref{eq:def-tilde-rho-j}, and where we set:
\begin{multline}\label{j1a} 
A_{j,n} =
 \int_{[0,t]^n}\int_{\RR^n} \left| \tilde{\rho}^{(\beta)}_j   \right|^2
    \prod_{i=j+1}^{n} e^{-\frac {1 }{2} (s_{\si(i+1)}-s_{\si(i)})|\xi_{\sigma(i)}+\cdots +\xi_{\sigma(1)} |^2}\\
\times \prod_{i=1}^{j-1}   \bigg|  \rho^{(\beta)}(s_{\si(i)}, s_{\si(i+1)};  \xi_{\si(i)}+\cdots +
\xi_{\si(1)} ) \bigg|^2 
  \prod_{i=1}^n  |\xi_i |^{1-2H} d\xi ds\,.
\end{multline}
For the time being,    denote $t=s_{\si(j+1)}$, $s=s_{\si(j)}$, $\eta=\xi_{\sigma(j)}+ \cdots + \xi_{\sigma(1)}$, let $C$ be a generic constant (possibly depending on $H$ and $t$), and let us estimate $\tilde{\rho}_j^{(\beta)}$. It is easy to see that
\begin{eqnarray*}
&& \rho^{(\beta)}(s,t;\eta)
-e^{-\frac12 (t-s)\eta^2}
=\frac{1}{\sqrt{2\pi (t-s)}}  \int_{|x|\ge \beta \sqrt t} e^{ix\eta-
\frac{x^2}{2(t-s)}}dx \\
&=& \frac{1}{\sqrt{2\pi (t-s)}}  \int_{|x|\ge \beta \sqrt t} \cos(\eta x) e^{ -
\frac{x^2}{2(t-s)}}dx \\
&=& \frac{-2}{\eta \sqrt{2\pi (t-s)}}    e^{ -
\frac{\beta^2t }{2(t-s)}}\sin (\eta \beta \sqrt t)-
\frac{1}{\eta \sqrt{2\pi (t-s)}}  \int_{|x|\ge \beta \sqrt t} \sin (\eta x) d e^{ -
\frac{x^2}{2(t-s)}} \,.
\end{eqnarray*}
Therefore, trivially bounding $|\sin (\eta x)|$ by 1 in the integral above, we get
\[
|\rho^{(\beta)}(s,t;\eta)
-e^{-\frac12 (t-s)\eta^2}|\le
\frac{C }{|\eta| \sqrt{ t-s }} e^{-\frac {\beta^2}{2}}.
\]
On the other hand, it is readily checked from \eqref{eq:def-tilde-rho-j} that $|\rho^{(\beta)}(s,t;\eta)|$  is  bounded by a constant $C$.
Thus, for any $\theta\in [0, 1]$ (possibly depending on $\eta$), we have
\begin{equation} \label{b7}
|\rho^{(\beta)}(s,t;\eta)
-e^{-\frac12 (t-s)\eta^2}|\le \frac{C}{|\eta|^\theta  ( t-s)^{\theta/2} }  e^{-\frac {\beta^2\theta }{2}}.
\end{equation}
Substituting this bound into \eqref{j1a} yields
\begin{multline}\label{b9}
A_{j,n}\le  C^2  e^{-\beta^2\theta }\int_{[0,t]^n}\int_{\RR^n}
\frac{(s_{\si(j+1)}-s_{\si(j)})^{- \theta }}{|\xi_{\sigma(j)}+\cdots +\xi_{\sigma(1)} |^{2 \theta } }  
 \prod_{i=j+1}^{n}
e^{- (s_{\si(i+1)}-s_{\si(i)})|\xi_{\sigma(i)}+\cdots +\xi_{\sigma(1)} |^2} \\ 
\times   \prod_{i=1}^{j-1}    \left| \rho^{(\beta)}(s_{\si(i)}, s_{\si(i+1)};  \xi_{\si(i)}+\cdots +
\xi_{\si(1)} ) \right|^2 
  \prod_{i=1}^n  |\xi_i |^{1-2H} d\xi ds.  
\end{multline}
Making the change of variable $\xi_{\sigma(i)}+\cdots + \xi_{\sigma(1)}=\eta
_{i}$, for all $i=1,2,\dots, n$, we obtain
\begin{multline*}
A _{j,n} 
\le C^2  e^{-\beta^2\theta }\ \int_{[0, t]^n}\int_{\RR^n} 
(s_{\si(j+1)}-s_{\si(j)})^{-\theta}     |\eta_j |^{-2\theta}   
\prod_{i=j+1}^{n}
e^{- (s_{\si(i+1)}-s_{\si(i)}) \eta_i^2 } \\
 \times \prod_{i=1}^{j-1}  |\rho^{(\beta)}(s_{\si(i)},s_{\si(i+1)};
\eta_i)|^2    \prod_{i=1}^n  |\eta_i-\eta_{i-1} |^{1-2H}  d\eta ds,
\end{multline*}
where we have set $\eta_{0}=0$.   
 We can now invoke the elementary bound $|\eta_i-\eta_{i-1} |^{1-2H} \le |\eta_i |^{1-2H} + | \eta_{i-1} |^{1-2H} $, and
we obtain
\begin{multline}\label{b10}
A_{j,n} 
\le C^{2}  e^{-\beta^2\theta }\sum_{\al\in D_{n}} \int_{[0, t]^n}\int_{\RR^n}
(s_{\si(j+1)}-s_{\si(j)} )^{-\theta}
 |\eta_j |^{-2\theta}
\prod_{i=j+1}^{n} e^{-   (s_{\si(i+1)}-s_{\si(i)})\eta_i   ^2 }  \\
\times  \prod_{i=1}^{j-1}    |\rho^{(\beta)}(s_{\si(i)},s_{\si(i+1)};
\eta_i)|^2   \prod_{i=1}^n  |\eta_i|^{\al_i}  d\eta ds \,,
\end{multline}   
where  $D_{n}$ is a subset of multi-indices of length $n$ satisfying the following rules: $\text{Card}(D_{n})=2^{n}$ and for any $\al\in D_{n}$ we have
\begin{equation}\label{b11}
|\al|\equiv \sum_{i=1}^{n} \alpha_i = n(1-2H),
\quad\text{and}\quad
\al_{i} \in \{0, 1-2H, 2(1-2H)\}, \quad i=1,\ldots, n.
\end{equation}

Now we perform the integration on each variable $\eta_i$ in relation \eqref{b10}. 
If $i\ge j+1$,  it is readily checked that
\begin{equation} \label{f1}
\int_\RR |\eta_i|^{\al_i}  e^{-   (s_{\si(i+1)}-s_{\si(i)})\eta_i   ^2 }d\eta_i=C (s_{\si(i+1)}-s_{\si(i)}) ^{-\frac{\al_i+1}{2}}\,.
\end{equation}
In the case  $i\le j-1$,   we also claim that
\begin{equation} \label{h13}
 \int_\RR   |\rho^{(\beta)}(s_{\si(i)},s_{\si(i+1)};
\eta_i)|^2  |\eta_i|^{\alpha_i} d\eta_i\le C  (s_{\si(i+1)}-s_{\si(i)} )^{-\frac{\al_i+1}{2}}.
\end{equation}
In fact, recalling our definition \eqref{eq:def-rho-beta} and thanks to an elementary change of variable, this integral can be written as
\[
\frac 1{2\pi}  (s_{\si(i+1)}-s_{\si(i)} )^{-\frac{\al_i+1}{2}}
 \int_\R |\xi|^{\alpha_i}   \left| \int_{-\beta \sqrt{s_{\si(i+1)}} /\sqrt{s_{ \si(i+1)} -s_{\si(i)}}}
 ^ {\beta \sqrt{s_{\si(i+1)}} /\sqrt{ s_{\si(i+1)}- s_{\si(i)}}} e^{ix \xi -\frac {x^2} 2} dx \right|^2 d\xi,
 \]
 which is bounded by a constant times  $(s_{\si(i+1)}-s_{\si(i)} )^{-\frac{\al_i+1}{2}}$ by Lemma \ref{lemA} in the Appendix below.
It remains to consider the integral over the variable $\eta_j$ in \eqref{b10}, which is given by
\begin{equation*}
(s_{\si(j+1)}-s_{\si(j)} )^{-\theta} 
 \int_{\RR} |\eta_j |^{-2\theta +\al_j} 
 d\eta_j \,.
\end{equation*}
 We decompose the integral $\int_\R |\eta_j|^{-2\theta +\alpha_j} d\eta_j$ into two parts: on the region $|\eta_j| \le1$ we take $\theta= \frac {\alpha_j+1} 2 - \frac \delta 2$ and on the region  $|\eta_j| >1$ we take $\theta= \frac {\alpha_j+1} 2 + \frac \delta 2$,  for some $\delta >0$ to be fixed later.   In this way,  we obtain 
\begin{align} \label{e.intergalj-1-j}
&(s_{\si(j+1)}-s_{\si(j)} )^{-\theta} 
 \int_{\RR} |\eta_j |^{-2\theta +\al_j} 
 d\eta_j  \notag\\
 &\le
 C\lp  (s_{\si(j+1)}-s_{\si(j)} )^{-\frac12(\al_{j}+1-\delta)} 
 + (s_{\si(j+1)}-s_{\si(j)} )^{-\frac12(\al_{j}+1+\delta)} \rp \notag \\
& \le
 C \, (s_{\si(j+1)}-s_{\si(j)} )^{-\frac12(\al_{j}+1+\delta)}.
\end{align}
Therefore, plugging \eqref{f1}, \eqref{h13} and \eqref{e.intergalj-1-j} into \eqref{b10}, we end up with the following relation for any $1\le j\le n$
\begin{eqnarray}
A_{j,n}  
&\le&  
C e^{-\beta^2\theta }  \sum_{\alpha}  \int_{[0,t]^n}  \prod_{i\not = j}
(s_{\si(i+1)}-s_{\si(i)} )^{-\frac{\al_i+1}{2}}  \label{h15}
(s_{\si(j+1)}-s_{\si(j)} )^{-\frac {\alpha_j+1}2 - \frac \delta 2 }ds  \notag\\
&=&
C e^{-\beta^2\theta } n! \sum_{\alpha}  \int_{S_{n}(t)}  
\prod_{i\not = j} (s_{i+1}-s_{i} )^{-\frac{\al_i+1}{2}}  \label{h15}
(s_{j+1}-s_{j} )^{-\frac {\alpha_j+1}2 - \frac \delta 2 }ds,
\end{eqnarray}
where $\theta = \frac {\alpha_j +1}2 -\frac \delta 2 $.
We now wish to apply Lemma \ref{lem:intg-simplex} in order to bound the right hand side of \eqref{h15}, and we first discuss the nature of the exponents involved: (i)~First we have to ensure that each exponent in the integral showing up in \eqref{h15} is lower bounded by $-1$. These exponents are $\ge -\frac {\max_{i\le n}\alpha_i+1}{2} -\frac{\delta}{2}$, and recall that $\max_{i\le n}\alpha_i\le 2(1-2H)$ according to \eqref{b11}. We can thus ensure that each exponent is greater than $-1$, provided that $0< \delta  < 4H-1$. (ii)~Invoking relation \eqref{b11} again, it is readily checked that the sum of the exponents in  (\ref{h15}) is  $-n+nH -\frac \delta 2$. (iii)~We wish to choose $\theta$ as large as possible in order to ensure the maximal exponential decay for $A_{j,n} $. According to our previous considerations, we have taken  $\theta= \frac {\alpha_j+1} 2 - \frac \delta 2$ with $\delta$ of the form $4H-1-\ep$ for an arbitrarily small $\ep$. Referring once more to (\ref{h15}), we can just ensure $\al_{j}\ge 0$, which yields
$\te\ge   \frac {1-\delta}2  = 1-2H +\frac \ep 2 \ge 1-2H$. 
With those considerations in mind, we  can now apply Lemma \ref{lem:intg-simplex} to relation \eqref{h15} in order to conclude that
\begin{equation} \label{h3}
  A_{j,n}
\leq   \frac{   n! C^n  }{  \Gamma(nH+\frac 12)} \, e^{-\beta^2(1-2H)},
\end{equation}
where $C$ is a constant depending on $H$ and $ t$. Substituting (\ref{h3}) into (\ref{h9}) yields (recall that the constant $C$ might change from line to line)
\[
A_1 \le   \frac{    C^n   }{  \Gamma(nH+\frac 12)} \, e^{-\beta^2(1-2H)}.
\]
Using the inequality  $\Gamma(nH+\frac 12) \ge \frac {\Gamma(nH+1)}{nH+\frac 12}$, we obtain
\begin{equation} \label{k2}
A_1 \le   \frac{    C^n   }{  \Gamma(nH+1)} \, e^{-\beta^2(1-2H)}.
\end{equation}

Going back to our decomposition \eqref{eq:def-A1-A2}, let us now deal with the term $A_2$.  The spectral measure of the noise $W_\beta$ has a density equal to  $c_H(\ell_\beta * |\cdot|^{\frac 12-H})^2 (\xi)$. Therefore, thanks to another telescoping sum argument, we get
\begin{eqnarray*}
A_2& = & c_H^n n!   \int_{[0,t]^n}\int_{\RR^n} 
  |\cf f_{\beta, n} (s_1,\xi_1,\dots,s_n,\xi_n)|^2 \left| \prod_{i=1}^n |\xi_i|^{\frac 12-H} -  \prod_{i=1}^n(\ell_\beta * |\cdot|^{\frac 12-H} (\xi_i) \right|^2 d\xi ds \\
  &\le &  c_H^n  n n! \sum_{j=1}^n  \int_{[0,t]^n}\int_{\RR^n} 
  |\cf f_{\beta, n} (s_1,\xi_1,\dots,s_n,\xi_n)|^2  \prod_{i=1}^{j-1} |\xi_i|^{1-2H}  \\
  &&\hspace{1.5in}\times 
  \left|  |\xi_j|^{\frac 12-H}  -(\ell_\beta * |\cdot|^{\frac 12-H}) (\xi_j) \right|^2 
    \prod_{i=j+1}^n (\ell_\beta * |\cdot|^{\frac 12-H} )^2(\xi_i)
   d\xi ds.
 \end{eqnarray*}
 In addition, notice that  
 \[
 | |\xi_j|^{\frac 12-H}  -(\ell_\beta * |\cdot|^{\frac 12-H}) (\xi_j)|
 \le c_{1,H}\beta ^{-(\frac12-H)},
 \]
 where $c_{1,H}= \int_{\RR} |\eta|^{\frac 12-H} \ell(\eta) d\eta$. This follows easily from
 \[
  | |\xi_j|^{\frac 12-H}  -(\ell_\beta * |\cdot|^{\frac 12-H}) (\xi_j)|
  \le \beta^{ -(\frac 12-H)} \int_{\R} \ell(\eta)  \left| |\beta \xi _j|^{\frac 12-H} - |\beta \xi _j-\eta|^{\frac 12-H} \right|d\eta,
  \]
  and the inequality  $ \left| |\beta \xi_j |^{\frac 12-H} - |\beta \xi _j-\eta|^{\frac 12-H} \right| \le |\eta| ^{\frac 12-H}$.
  In the same way we can show that
 \[
  (\ell_\beta * |\cdot|^{\frac 12-H} )(\xi_i)
  \le |\xi_i|^{\frac 12-H} + c_{1,H} \beta^{-(\frac12-H)}.
  \]
Taking into account that $\beta \ge 1$,  this leads to the estimate
 \[
 A_2 \le C^n n!  \beta^{-(1-2H)} \int_{[0,t]^n}\int_{\RR^n} 
  |\cf f_{\beta, n} (s_1,\xi_1,\dots,s_n,\xi_n)|^2 \prod_{i=1}^n (|\xi_i|^{1-2H} \vee 1) d\xi ds.
  \]
We now start from the expression \eqref{k1} for $\cf f_{\beta, n} (s_1,\xi_1,\dots,s_n,\xi_n)$, we make the change of variable $\xi_{\sigma(i)}+\cdots + \xi_{\sigma(1)}=\eta_{i}$, for all $i=1,2,\dots, n$, and we bound $|\eta_i-\eta_{i-1} |^{1-2H}$ by $|\eta_i |^{1-2H} + | \eta_{i-1} |^{1-2H}$ as in the case of our term $A_{1}$. This yields
\begin{eqnarray}\label{k3}   \nonumber
 A_2 &\le& C^n   \beta^{-(1-2H)} \int_{S_{n}(t)}\int_{\RR^n} 
  |\rho^{(\beta)}(s_i, s_{i+1)};   \eta_i )|^2 \prod_{i=1}^n (|\eta_i-\eta_{i-1}|^{1-2H} \vee 1) d\eta ds\\ \nonumber
  &\le& C^n   \beta^{-(1-2H)} \sum_{\al\in D_{n}}  \int_{S_{n}(t)}\int_{\RR^n} 
  |\rho^{(\beta)}(s_i, s_{i+1});   \eta_i )|^2 \prod_{i=1}^n (|\eta_i|^{\alpha_i} \vee 1) d\eta ds \\
  &\le &  C^n   \beta^{-(1-2H)}  \sum_{\al\in D_{n}} \int_{S_{n}(t)} (s_{i+1}- s_i) ^{-\frac {\alpha_i+1}2} ds 
  \le       \frac{C^n}{ \Gamma(nH+1)} \, \beta^{-(1-2H)},
\end{eqnarray}
where we recall that $S_{n}(t)$ denotes the $n$-dimensional simplex of $[0,t]^{n}$.
We now conclude, putting together (\ref{k2}) and (\ref{k3}), that
\begin{equation} \label{h10}
\EE[|   I_{n}(f_ n) -I_{\beta,n}(f_{\beta, n}) |^2]
\le  \frac {C^n} { \Gamma(nH+1 )} \, \beta^{-(1-2H)}.
\end{equation}
In a similar way, we can also obtain the following estimate (whose proof is left to the patient reader), where the constant $C$ is independent of $\beta$
\begin{equation}
\EE[|   I_{\beta,n}(f_{\beta, n}) |^2]\le  \frac{     C^n     }{ \Gamma(nH+1)}  \,.
\label{h11}
\end{equation} 

\noindent
{\it Step 2: $L^{p}$-estimates.} 
Recall that $U_{\beta,n}(t,x)$ is defined by the finite sum \eqref{eq:chaos-expansion-u(txbetan)}.
Let us first get the convergence of this finite sum  to a random variable $U_{\beta}(t,x)$ formally defined by the series \eqref{b4}. To this aim, recall that for a functional $F_{n}$ which belongs to the $n$-th chaos of a Wiener space and $p\ge 2$, we have the hypercontractivity inequality  $\|F_{n}\|_{L^{p}(\oom)}\le p^{\frac{n}{2}} \|F_{n}\|_{L^{2}(\oom)}$. We thus get
\begin{eqnarray*}
\| U_{\beta, n}(t,x) -U_{\beta}(t,x) \|_{L^{p}(\oom)}
&\le&
\sum_{k=n+1}^{\infty} \| I_{\beta,k}(f_{\beta, k}) \|_{L^{p}(\oom)}
\le
\sum_{k=n+1}^{\infty}  p^{\frac{n}{2}} \| I_{\beta,k}(f_{\beta, k}) \|_{L^{2}(\oom)}  \\
&\le&
\sum_{k=n+1}^{\infty}  \frac{(C p^{1/2})^{n}}  { \Gamma\lp\frac{nH}{2}+1\rp},
\end{eqnarray*}
where the last inequality is due to \eqref{h11}. Furthermore, the following inequality, valid for $z\ge 0$ and $a>0$, is an easy consequence of estimates on Mittag-Leffler functions which can be found in \cite{EMOF}
\begin{equation*}
\sum_{k=n+1}^{\infty} \frac{z^{n}}{\gga(ak+1)} 
\le \frac{c_{1} z^{n}}{\gga(an+1)} \, e^{c_{2} z^{\frac{1}{a}}},
\end{equation*}
where $c_{1},c_{2}$ are two universal constants. Plugging this bound into our previous estimate, we end up with
\begin{equation*}
\| U_{\beta, n}(t,x) -U_{\beta}(t,x) \|_{L^{p}(\oom)}
\le
\frac{(c_{3} p^{1/2})^{n}} {\Gamma\lp\frac{nH}{2}+1 \rp} \, \exp\lp c_{4} p^{\frac{1}{H}}\rp,
\end{equation*}
which shows our claim \eqref{e.n-difference}.

The same kind of consideration also allows to derive inequality \eqref{e.beta-difference}. Namely, write
\begin{equation*}
\| u(t,x) -U_{\beta}(t,x) \|_{L^{p}(\oom)}
\le
\sum_{k=0}^{\infty} \| I_{n}(f_ n) -I_{\beta,n}(f_{\beta, n}) \|_{L^{p}(\oom)}
\le
\beta^{-(1-2H)} \, \sum_{k=0}^{\infty} \frac{(C p^{1/2})^{n}} { \Gamma\lp\frac{nH}{2}+1\rp},
\end{equation*}
where we resort to hypercontractivity and \eqref{h10} for the last step.
This easily yields \eqref{e.beta-difference} by the same kind of argument as before. 
\end{proof}

\begin{corollary}\label{cor:cvgce-Un-beta-u}
Consider $p\ge 1$.
Under the same assumptions as in Proposition  \ref{thm:exist-uniq-chaos},  suppose that $\beta =\exp(Mp^{\frac 1 H})$ and
$n= [\log \beta]+1$ for some constant $M>0$.  Then,  for any $\nu >0$, there exists $M>0$, such that
 \begin{equation}  \label{equ1}
 \| u(t,x) -U_{\beta,n}(t,x) \|_{L^{p}(\oom)} \le \exp \left\{  - \nu p^{\frac 1H} \right\}.
 \end{equation}
\end{corollary}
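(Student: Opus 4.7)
The strategy is a straightforward application of the triangle inequality combined with the two bounds of Proposition~\ref{thm:exist-uniq-chaos}, followed by a calibration of the constant $M$. Specifically, I would write
\[
\| u(t,x) - U_{\beta,n}(t,x)\|_{L^p(\Omega)}
\le \| u(t,x) - U_{\beta}(t,x)\|_{L^p(\Omega)}
+ \| U_{\beta}(t,x) - U_{\beta,n}(t,x)\|_{L^p(\Omega)}
\]
and bound the two terms separately, then pick $M$ sufficiently large to absorb every prefactor.

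For the first term, substitute $\beta = \exp(Mp^{1/H})$ into \eqref{e.beta-difference} to get
\[
\| u(t,x) - U_{\beta}(t,x)\|_{L^p(\Omega)}
\le \exp\!\lp \lc C - M\lp \tfrac12 - H\rp \rc p^{1/H}\rp.
\]
Since $H < 1/2$, this is bounded by $\frac12 \exp(-\nu p^{1/H})$ as soon as $M \ge (C + \nu + \log 2)/(\tfrac12 - H)$.

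The second term is where the delicate calibration happens. With $n = [\log\beta]+1 \sim Mp^{1/H}$, the bound \eqref{e.n-difference} gives
\[
\| U_{\beta,n}(t,x) - U_{\beta}(t,x)\|_{L^p(\Omega)}
\le \frac{(c_{1} p^{1/2})^{n}}{\Gamma\lp \tfrac{nH}{2}+1\rp} \exp\lp c_{2} p^{1/H}\rp.
\]
Taking logarithms and using Stirling's formula $\log\Gamma(\tfrac{nH}{2}+1) = \tfrac{nH}{2}\log(\tfrac{nH}{2}) - \tfrac{nH}{2} + O(\log n)$, the exponent can be estimated, for $p$ and $M$ large, as
\[
n\log(c_{1} p^{1/2}) - \frac{nH}{2}\log\!\lp\frac{nH}{2}\rp + \frac{nH}{2} + c_{2}p^{1/H}
= p^{1/H}\lc M\log c_{1} - \frac{MH}{2}\log M + O(M) + c_{2}\rc,
\]
where the leading $\tfrac12\log p$ terms cancel exactly due to the fact that $n \sim Mp^{1/H}$ (so $\log n$ produces a $\tfrac{1}{H}\log p$ that, when multiplied by $\tfrac{nH}{2}$, cancels the $\tfrac{n}{2}\log p$ coming from $p^{n/2}$). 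The term $-\tfrac{MH}{2}\log M$ dominates for $M$ large enough, so this piece too is bounded by $\tfrac12\exp(-\nu p^{1/H})$ as soon as $M$ is chosen large.

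Adding the two estimates yields \eqref{equ1}. The only nonroutine point is the cancellation of the two $\log p$ contributions in the Stirling expansion — one has to be a bit careful with the floor in $n = [\log\beta]+1$, but since the difference $n - Mp^{1/H}$ is bounded by $1$ it only affects the implicit constants and does not disturb the leading behavior in $p$.
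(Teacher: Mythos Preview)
Your proposal is correct and follows essentially the same route as the paper: triangle inequality, then plug $\beta=\exp(Mp^{1/H})$ into \eqref{e.beta-difference} for the first piece and apply Stirling to \eqref{e.n-difference} for the second, choosing $M$ large so that the $-\tfrac{MH}{2}\log M$ term dominates. Your remark on the cancellation of the $\log p$ contributions (from $p^{n/2}$ versus $n^{-nH/2}$ with $n\sim Mp^{1/H}$) is exactly the point the paper handles implicitly, and your care with the floor in $n=[\log\beta]+1$ is appropriate.
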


\begin{proof}
From (\ref{e.beta-difference}) and (\ref{e.n-difference}) we obtain
\[
\| u(t,x) -U_{\beta,n}(t,x) \|_{L^{p}(\oom)} 
\le 
e^{C p^{\frac 1H}}  \left ( \beta^{-(\frac12-H)} + \frac {C^n p^{n/2}} {\Gamma(\frac {nH} 2+\frac 12)} \right),
 \]
 for some constant $C$ depending on $H$ and $t$. Using the asymptotic properties of the Gamma function, this is
 bounded by
 \begin{equation}\label{h12}
 e^{C p^{\frac 1H}}  \left ( \beta^{-(\frac12-H)} +  C^n p^{\frac  n2} n^{-\frac {nH} 2} \right).
 \end{equation}
To bound the above right-hand side,  we have first (recall that $\beta =\exp(Mp^{\frac 1 H})$)
\[
 e^{C p^{\frac 1H}}  \beta^{-(\frac12-H)} = \exp\left\{ p^{\frac 1H} \lc C- \lp \frac 12-H\rp M \rc \right\},
 \]
 which is less than   $\frac 12 \exp(  - \nu p^{\frac 1H } )$ for $M$ large enough. For the second summand in the right-hand side of \eqref{h12}, we obtain (provided $n= [\log \beta]+1$) the upper bound
 \[
 \exp\left\{ p^{\frac 1H} \lp C + M \log C -\frac M2 \log M\rp  + \frac 12 \log p \right\},
 \]
  which again  is less than   $\frac 12 \exp(  - \nu p^{\frac 1H })$ for $M$ large enough. This completes the proof of the corollary.
\end{proof}

We are now ready to give the proof of our lower bound.

\begin{proposition}\label{prop:lower-bound}
Under the assumptions of Theorem \ref{thm:space-asymptotics}, for all $t>0$ we have
\begin{equation}\label{eq:spatial-lower-bound}
\liminf_{R\to\infty}(\log R)^{-{\frac{1}{1+H}}}\log\lp \max_{\vert x\vert\le R}u(t,x) \rp
\ge \lc \hat{c}_{H,t}\rc^{-\frac{1}{1+H}} \  \hskip.2in \text{a.s.},
\end{equation}
where $\hat{c}_{H,t}$ is defined in Corollary \ref{c.2.8} and is related to \eqref{eq:as-limit-in space} by Remark \ref{rmk-cHt-qHt}.
\end{proposition}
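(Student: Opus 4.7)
The plan is to combine the sharp tail estimate of Corollary \ref{c.2.8} with the localized approximation $U_{\beta,n}$, whose restriction to a well-spaced grid yields i.i.d.\ random variables by Lemma \ref{indep}, and then carry out two Borel--Cantelli arguments. Fix $\lambda \in \lp 0, \hat{c}_{H,t}^{-1/(1+H)} \rp$; it suffices to prove $\liminf_{R\to\infty} (\log R)^{-1/(1+H)} \log \max_{|x|\le R} u(t,x) \ge \lambda$ a.s., since $\lambda$ can be taken arbitrarily close to $\hat{c}_{H,t}^{-1/(1+H)}$. Set $a_R := \lambda(\log R)^{1/(1+H)}$ and pick $\epsilon > 0$ so that $\alpha := \hat{c}_{H,t}(1+\epsilon)\lambda^{1+H} < 1$; Corollary \ref{c.2.8} then gives $\bp(\log u(t,0) \ge a_R) \ge R^{-\alpha}$ for $R$ large.

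I will choose $p_R := C(\log R)^{H/(1+H)}$ with $C$ large and, via Corollary \ref{cor:cvgce-Un-beta-u}, the associated $\beta_R = e^{Mp_R^{1/H}}$ and $n_R = [\log\beta_R]+1$, so that $\|u(t,\cdot) - U_{\beta_R,n_R}(t,\cdot)\|_{L^{p_R}} \le \exp(-\nu p_R^{1/H})$ for some large $\nu>0$. Place $\cn_R := \lfloor R/[2 n_R \beta_R (1+\sqrt t)]\rfloor$ points $x_1 < \cdots < x_{\cn_R}$ in $[-R, R]$ with pairwise gaps exceeding $2n_R\beta_R(1+\sqrt t)$; by Lemma \ref{indep} the random variables $\{U_{\beta_R,n_R}(t,x_i)\}_i$ are i.i.d., and since $n_R\beta_R = \exp(o(\log R))$, one has $\cn_R \ge R^{1-\eta}$ for any fixed $\eta > 0$ and $R$ large. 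Setting $\delta_R := \tfrac12 e^{a_R}$, Markov's inequality gives
$$\bp\lp |u(t,x_i)-U_{\beta_R,n_R}(t,x_i)| > \delta_R\rp \le \delta_R^{-p_R} \exp(-\nu p_R^{1+1/H}),$$
and the chosen scaling makes $p_R \log \delta_R \asymp C\lambda \log R$ and $\nu p_R^{1+1/H} = \nu C^{1+1/H}\log R$, so this probability is at most $R^{-(1+\tau)}$ for some $\tau > 0$ once $C$ (or $\nu$) is large enough. Consequently $\bp(U_{\beta_R,n_R}(t,0) \ge \delta_R) \ge \bp(\log u(t,0)\ge a_R) - R^{-(1+\tau)} \ge \tfrac12 R^{-\alpha}$.

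Restricting to the subsequence $R_k := e^k$, independence yields
$$\bp\lp \max_{1\le i \le \cn_{R_k}} U_{\beta_{R_k},n_{R_k}}(t,x_i) < \delta_{R_k}\rp \le \lp 1 - \tfrac12 R_k^{-\alpha}\rp^{\cn_{R_k}} \le \exp\lp -\tfrac12 R_k^{1-\alpha-\eta}\rp,$$
which is summable in $k$. A first Borel--Cantelli argument then forces $\max_i U_{\beta_{R_k},n_{R_k}}(t,x_i) \ge \delta_{R_k}$ eventually almost surely. A second Borel--Cantelli argument, applied to the union bound $\cn_{R_k} \cdot R_k^{-(1+\tau)} \le R_k^{-\tau}$, forces $\max_i |u(t,x_i) - U_{\beta_{R_k},n_{R_k}}(t,x_i)| \le \delta_{R_k}$ eventually almost surely. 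Combining these two events gives $\max_{|x| \le R_k} u(t,x) \ge \max_i u(t,x_i) \ge \tfrac12 \delta_{R_k}$, so $\log\max_{|x|\le R_k} u(t,x) \ge a_{R_k} - \log 4$ eventually. Passage from $R_k$ to continuous $R$ is immediate by the monotonicity of $R\mapsto\max_{|x|\le R}u(t,x)$ and $(\log R_k/\log R_{k+1})^{1/(1+H)} \to 1$; letting $\lambda \uparrow \hat{c}_{H,t}^{-1/(1+H)}$ completes the argument.

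The main difficulty is the simultaneous balancing of the four parameters $p_R$, $\beta_R$, $n_R$, $\delta_R$: $\beta_R$ and $n_R$ must grow only subexponentially in $R$ so that $\cn_R$ is a positive power of $R$, while $p_R$ must be large enough that the Corollary \ref{cor:cvgce-Un-beta-u} approximation beats both the target scale $\delta_R \asymp \exp(\lambda (\log R)^{1/(1+H)})$ and the union bound over the $\cn_R$ points. The scaling $p_R \asymp (\log R)^{H/(1+H)}$ is the unique power of $\log R$ that makes both $p_R \log \delta_R$ and $p_R^{1+1/H}$ of order $\log R$ simultaneously, turning the deviation probability into $R^{-\text{const}}$ with constants freely tunable through $C$ and $\nu$.
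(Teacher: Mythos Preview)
Your proof is correct and follows essentially the same approach as the paper's: the same choice $p_R \asymp (\log R)^{H/(1+H)}$, the same localized approximation via Corollary~\ref{cor:cvgce-Un-beta-u}, the same grid and independence from Lemma~\ref{indep}, and the same pair of Borel--Cantelli arguments (one for the fluctuation, one for the independent maxima). There is one small arithmetic slip: from $\max_i U_{\beta_{R_k},n_{R_k}}(t,x_i)\ge \delta_{R_k}$ and $\max_i|u-U_{\beta_{R_k},n_{R_k}}|\le \delta_{R_k}$ you only get $\max_i u(t,x_i)\ge 0$, not $\ge \tfrac12\delta_{R_k}$; this is fixed by running the error estimate with threshold $\tfrac12\delta_{R_k}$ instead (the extra factor $2^{p_R}$ is subpolynomial and harmless), after which your stated conclusion $\log\max_{|x|\le R_k}u(t,x)\ge a_{R_k}-\log 4$ holds.
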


\begin{proof}
We divide this proof in two steps: first we determine a main contribution to the maximum, given by our approximations $U_{\beta,n}$ suitably discretized. Then we will evaluate the main contribution.

\noindent
\textit{Step 1: Fluctuation results.}
Fix $R>0$ and consider a given $\nu>0$.
Referring to the notation of Corollary \ref{cor:cvgce-Un-beta-u}, we wish to choose $p$ in inequality \eqref{equ1} such that we obtain
\begin{equation}\label{l0}
\EE\lc  |u(t,x) -U_{\beta,n}(t,x)|^{p} \rc
\le
R^{-\nu}.
\end{equation}
It is readily checked that this is achieved for $p=p(R)=(\log R) ^{\frac {H}{1+H}}$, which is greater than~1 if $R\ge e$. We thus choose this $p$, the corresponding $\beta$ and $n$ in Corollary \ref{cor:cvgce-Un-beta-u} being then given by
\begin{equation}\label{l1}
\beta=\beta(R)=\exp\Big\{M(\log R)^{\frac{1}{1+H}}\Big\}
\quad\text{and}\quad
n = \lc M(\log(R))^{\frac{1}{1+H}} \rc + 1.
\end{equation}

For a fixed $t>0$, we now wish to produce some independent random variables $U_{\beta,n}(t,x_{j})$, with  $x_{j}\in[-R,R]$. 
For this we need   $x_{j+1}-x_j>2n \beta (1+t^{1/2})$ for all $j$. Set $N=2n\beta (1+   t^{1/2})$. We choose the set of points
\[
\cn_R=\left\{ 2kN:  k\in \mathbb{Z}, - \left[ \frac R{2N}\right] \le k \le \left[ \frac R{2N}\right]  \right\}.
\]
If $|\cn_{R}| = \left( 2  \left[ \frac R{2N}\right] +1 \right)$ denotes  the cardinality of $\cn_{R}$, one can check, using the expressions of $\beta$ and $n$ given in (\ref{l1}),   that for any $\ep>0$    the following inequalities hold
\begin{equation}\label{l2}
c_{H,M,\ep} \, R^{1-\ep} \le |\cn_{R}| \le R,
\end{equation}
for $R$ large enough, 
where $c_{H,M,\ep}$ is a positive constant.

We can now study $\max_{z\in { \mathcal N}_R}\vert u(t,z)-U_{\beta, n} (t,z)\vert$. For any $\eta>0$ we have 
\begin{multline}\label{l3}
\PP\Big(\log\max_{z\in { \mathcal N}_R}\big\vert u(t,z)
-U_{\beta, n} (t,z)\big\vert\ge
\eta (\log R)^{\frac{1}{1+H}}\Big)  \\
\le
|\cn_{R}| \, 
\PP\Big(\log\big\vert u(t,0)-U_{\beta,n } (t,0)\big\vert\ge
\eta (\log R)^{\frac{1}{1+H}}\Big)
\end{multline}
Furthermore, a simple application of Markov's inequality yields, for an arbitrary $p\ge 1$
\begin{equation*}
\PP\Big(\log\big\vert u(t,0)-U_{\beta,n } (t,0)\big\vert\ge
\eta (\log R)^{\frac{1}{1+H}}\Big)
\le
\frac{\EE\lc  |u(t,x) -U_{\beta,n}(t,x)|^{p} \rc}{\exp\lp \eta p  (\log R)^{\frac{1}{1+H}} \rp},
\end{equation*}
so that choosing $p=p(R)$ and invoking relation \eqref{l0}, we can recast this relation as
\begin{equation}\label{l3.5}
\PP\Big(\log\big\vert u(t,0)-U_{\beta,n } (t,0)\big\vert\ge
\eta (\log R)^{\frac{1}{1+H}}\Big)
\le R^{-(\nu+\eta)}.
\end{equation}
Going back to inequality \eqref{l3} and choosing $\nu=3$, we have obtained the following inequality for $R$ large enough
\begin{equation}\label{l4}
\PP\Big(\log\max_{z\in { \mathcal N}_R}\big\vert u(t,z)
-U_{\beta, n} (t,z)\big\vert\ge
\eta (\log R)^{\frac{1}{1+H}}\Big)  
\le
R^{-2}.
\end{equation}
Notice that this decay in $R$ is sufficient to apply Borel-Cantelli's lemma. Considering for instance a sequence $R=m$, we get
\begin{equation}
\lim_{m\to\infty}(\log m )^{-{\frac{1}{1+H}}}
\log\max_{z\in { \mathcal N}_{m}}\big\vert u(t,z)
-U_{\beta(m),n(m)}(t,z)\big\vert=0\hskip.2in \text{a.s},
\label{e.2.21}
\end{equation}
which is enough to assert that
\begin{equation}\label{l5}
\liminf_{R\to\infty}(\log R)^{-{\frac{1}{1+H}}}\log\lp \max_{\vert x\vert\le R}u(t,x) \rp
\ge
\liminf_{R\to\infty}(\log R)^{-{\frac{1}{1+H}}}\log\lp \max_{x\in\cn_{R}}U_{\beta,n}(t,x) \rp,
\end{equation}
where we recall that $\beta=\beta(R)$ and $n=n(R)$ in the right-hand side of \eqref{l5} are given by~\eqref{l1}. We will now evaluate the right-hand side of \eqref{l5}, identified with our main contribution.

\noindent
\textit{Step 2: Evaluation of the main term.}
Fix $\lambda>0$ and $\delta>0$ arbitrarily small, satisfying the following condition
\begin{equation}
\hat{c}_{H,t} \lp \lambda+\delta\rp^{1+H} < 1 - \delta,
\label{e.2.22}
\end{equation}
where $\hat{c}_{H,t}$ is the constant introduced in Corollary \ref{c.2.8} (observe that $\lambda$ is arbitrarily close to $\hat{c}_{H,t}^{-1/(1+H)}$).
Using the independence property established in  Lemma  \ref{indep}, we can write
\begin{multline}\label{equ2}
\PP\Big(\log\max_{z\in { \mathcal N}_R} \vert U_{\beta,n}(t,z)\vert
\le\lambda(\log R)^{\frac{1}{1+H}}\Big)  \\
= \bigg(1-\PP\Big(\log \vert U_{\beta,n}(t,0)\vert
>\lambda(\log R)^{\frac{1}{1+H}}\Big)\bigg)^{|\cn_{R}|}.
\end{multline}
In the following lines, we will write $u=u(t,0)$ and $U_{\beta,n}=U_{\beta,n}(t,0)$ to alleviate notations.
Then recall that for $a>0$ and $b\in \R$, the following elementary relation holds true
\begin{equation}\label{l5.5}
\log a \le \log 2+ \max\lcl \log(|a-b|), \, \log (|b|) \rcl.
\end{equation}
Applying this inequality to $a=u$ and $b=|U_{\beta,n} |$, we obtain
\[
\log  u \le
\log 2+\max \left\{ 
\log \lp | u  -U_{\beta,n}    |\rp, \,  \log ( | U_{\beta,n} | ) \right\}.
\]
Hence, if we assume that $R$ is large enough, so that $\log 2 \le \delta (\log R) ^{\frac 1{1+H}}$, we have
\begin{align*}
&\lcl \log u  > (\lambda+\delta )(\log R)^{\frac{1}{1+H}} \rcl
\subset
\lcl  \max \left\{ 
\log \lp | u  -U_{\beta,n}    |\rp, \,  \log \lp |U_{\beta,n} |\rp  \right\}
> \la (\log R)^{\frac{1}{1+H}}
 \rcl  \\
 &\hspace{1.3in}=
 \lcl  \log \lp | u  -U_{\beta,n}    |\rp > \la (\log R)^{\frac{1}{1+H}} \rcl
 \cup
  \lcl  \log \lp | U_{\beta,n}|\rp > \la (\log R)^{\frac{1}{1+H}} \rcl
\end{align*}
Owing to simple additivity properties of $\PP$, we thus get
\begin{align}\label{l6}
&\PP\Big(\log ( |U_{\beta,n}|) >\lambda(\log R)^{\frac{1}{1+H}}\Big) \notag \\
&\ge
\PP\Big(\log  u> (\lambda+\delta )(\log R)^{\frac{1}{1+H}}\Big)
-
\PP\Big(\log \lp | u  -U_{\beta,n}    |\rp> \lambda(\log R)^{\frac{1}{1+H}}\Big) \notag\\
&\ge
\PP\Big(\log  u> (\lambda+\delta )(\log R)^{\frac{1}{1+H}}\Big)
- \frac{1}{R^{3+\la}},
\end{align}
where the last inequality is a direct consequence of \eqref{l3.5}. Now recall that we have chosen $\la$ fulfilling condition \eqref{e.2.22}. Applying Corollary \ref{c.2.8} in this context yields
\begin{equation*}
\lim_{R\to\infty} \frac{1}{\log R}  \log \PP\Big(\log   u> (\lambda+\delta )(\log R)^{\frac{1}{1+H}}\Big)
>1-\delta,
\end{equation*}
and thus, for $R$ large enough the following holds true
\begin{equation*}
\PP\Big(\log  u> (\lambda+\delta )(\log R)^{\frac{1}{1+H}}\Big) > \frac{1}{R^{1- \delta }}.
\end{equation*}
Plugging this relation into \eqref{l6} gives, for $R$ large enough
\begin{equation}\label{l7}
\PP\Big(\log ( |U_{\beta,n}|) >\lambda(\log R)^{\frac{1}{1+H}}\Big) > \frac{1}{R^{1-\frac{\delta}{2}}}
\end{equation}
We now gather \eqref{equ2}, \eqref{l2} and \eqref{l7} in order to get
\begin{equation*}
\PP\Big(\log\max_{z\in { \mathcal N}_R} \vert U_{\beta,n}(t,z)\vert
\le\lambda(\log R)^{\frac{1}{1+H}}\Big)
\le
\lp  1- \frac{1}{R^{1-\frac{\delta}{2}}} \rp^{|\cn_{R}|}
\le
\exp\lp -c_{H,M,\ep} R^{\frac{\delta}{2}-\ep}\rp.
\end{equation*}
In conclusion, since we can choose $\ep<\frac{\delta}{2}$,  we have established the bound
$$
\PP\Big\{\log\max_{z\in { \mathcal N}_R} \vert U_{\beta,n} (t,z)\vert
\le\lambda(\log R)^{\frac{1}{1+H}}\Big\}
\le \exp\big\{-R^v\big\}
$$
for some  $v>0$ and for $R$ large enough. Resorting again to Borel-Cantelli's lemma, this implies
\begin{equation}
\liminf_{m\to\infty}(\log m)^{-{\frac{1}{1+H}}}
\log\max_{z\in { \mathcal N}_{m} }\vert U_{\beta(m),n}(t,z)
\vert\ge\lambda\hskip.2in \text{a.s.}\label{e.2.23}
\end{equation}

\noindent
\textit{Step 3: Conclusion.}
Combining the above inequality \eqref{e.2.23} with \eqref{e.2.21}, we have obtained:
$$
\liminf_{m\to\infty}(\log m)^{-{\frac{1}{1+H}}}
\log\max_{z\in { \mathcal N}_{m}} u(t,z)\ge\lambda\hskip.2in \text{a.s.}
$$
By the fact that
$$
\max_{z\in { \mathcal N}_R} u(t,z)\le\max_{\vert x\vert\le R} u(t,x)
$$
and by the monotonicity of $\displaystyle\max_{\vert x\vert\le R} u(t,x)$ in $R$, we can now easily deduce that:
$$
\liminf_{R\to\infty}(\log R)^{-{\frac{1}{1+H}}}\log \max_{\vert x\vert\le R} u(t,x)
\ge\lambda\hskip.2in \text{a.s.}
$$
Finally, recall that $\lambda$ satisfies condition \eqref{e.2.22}, and can thus be chosen arbitrarily close to $\hat{c}_{H,t}^{-1/(1+H)}$. Our proof is thus easily concluded. 
\end{proof}

\subsection{Proof of the upper bound}
The proof of the upper bound is based on a quantification of the fluctuations of $u$ in boxes around the points $x_{j}\in\cn_{R}$,  defined in the proof of Proposition \ref{prop:lower-bound}. We will first need an evaluation of the modulus of continuity of $u$ in the space variable.

\begin{proposition}  \label{prop9}
For any $\beta\in (0,2H-1/2)$, there  exists a constant  $C$ depending on $\alpha$, $H$ and $t$, such that for any $x,y \in \R$ and any  $p\ge 2$,
\begin{equation}\label{eq:bnd-Lp-increments-ut}
\EE \lc |u(t,x) - u(t,y) |^p\rp \le |x-y|^{p\beta} \exp \left( Cp^{ 1+ \frac 1{H}}\right).
\end{equation}
\end{proposition}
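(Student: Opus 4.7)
The plan is to use the chaos expansion of $u(t,x)$ recalled in \eqref{eq:chaos-expansion-u(tx)}--\eqref{eq:expression-fn}, write
\[
u(t,x)-u(t,y)=\sum_{n=1}^{\infty}I_{n}\bigl(g_{n}(\cdot;t,x,y)\bigr),
\quad\text{where}\quad g_{n}(\cdot;t,x,y):=f_{n}(\cdot,t,x)-f_{n}(\cdot,t,y),
\]
and then estimate the $\ch^{\otimes n}$-norm of $g_{n}$ using the Fourier representation as in Step~1 of the proof of Proposition~\ref{thm:exist-uniq-chaos}. The starting observation is that the Fourier transform of $g_{n}$ differs from that of $f_{n}$ only by the prefactor $e^{-ix\Sigma_{n}}-e^{-iy\Sigma_{n}}$, where $\Sigma_{n}=\xi_{\sigma(1)}+\cdots+\xi_{\sigma(n)}$. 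Hence for every $\beta\in(0,1)$,
\[
\bigl|\cf g_{n}(s_{1},\xi_{1},\ldots,s_{n},\xi_{n})\bigr|
\le \frac{2^{1-\beta}\vert x-y\vert^{\beta}|\Sigma_{n}|^{\beta}}{n!}
\prod_{i=1}^{n}e^{-\frac12(s_{\sigma(i+1)}-s_{\sigma(i)})|\xi_{\sigma(i)}+\cdots+\xi_{\sigma(1)}|^{2}}.
\]

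First, I would apply Parseval to compute $n!\,\|g_{n}\|_{\ch^{\otimes n}}^{2}$ against the spectral measure $\prod_{i}|\xi_{i}|^{1-2H}d\xi\,ds$, unfold the permutation to an integral over the simplex $S_{n}(t)$, and carry out the change of variables $\eta_{i}=\xi_{\sigma(i)}+\cdots+\xi_{\sigma(1)}$ (so that $\eta_{n}=\Sigma_{n}$). Splitting $|\eta_{i}-\eta_{i-1}|^{1-2H}\le|\eta_{i}|^{1-2H}+|\eta_{i-1}|^{1-2H}$ as in the derivation of \eqref{b10}, I get a sum over a set $D_{n}$ of $2^{n}$ multi-indices $\al$ satisfying \eqref{b11}, with the extra factor $|\eta_{n}|^{2\beta}$ attached to the last variable. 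Computing the Gaussian integrals in $\eta_{i}$ yields
\[
\int_{\R}|\eta_{i}|^{\al_{i}}e^{-(s_{i+1}-s_{i})\eta_{i}^{2}}d\eta_{i}
\lesssim(s_{i+1}-s_{i})^{-(\al_{i}+1)/2}\ \ (i<n),\qquad
\int_{\R}|\eta_{n}|^{\al_{n}+2\beta}e^{-(s_{n+1}-s_{n})\eta_{n}^{2}}d\eta_{n}
\lesssim(s_{n+1}-s_{n})^{-(\al_{n}+2\beta+1)/2}.
\]

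Then I would apply Lemma~\ref{lem:intg-simplex} to the resulting simplex integral. Here the assumption $\beta<2H-\tfrac{1}{2}$ is exactly what is needed: since $\al_{n}\le 2(1-2H)$, the worst exponent is $-(2\beta+\al_{n}+1)/2>-1$ iff $\beta<2H-\tfrac12$. Summing the other exponents gives total power $-n(1-H)-\beta$, so Lemma~\ref{lem:intg-simplex} produces $C^{n}t^{nH-\beta}/\Gamma(nH-\beta+1)$, and summing over the $2^{n}$ terms in $D_{n}$ gives
\[
\|g_{n}\|_{\ch^{\otimes n}}^{2}\,n!\le \frac{C^{n}\,|x-y|^{2\beta}}{\Gamma(nH-\beta+1)}.
\]

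Finally I would assemble the $L^{p}$ bound via hypercontractivity, $\|I_{n}(g_{n})\|_{L^{p}}\le(p-1)^{n/2}\|I_{n}(g_{n})\|_{L^{2}}$, summing
\[
\|u(t,x)-u(t,y)\|_{L^{p}}\le |x-y|^{\beta}\sum_{n=1}^{\infty}\frac{(C'p^{1/2})^{n}}{\sqrt{\Gamma(nH-\beta+1)}},
\]
and invoking the Mittag-Leffler-type estimate already used after~\eqref{h11} (which bounds $\sum_{n}z^{n}/\sqrt{\Gamma(nH+1)}$ by $c_{1}\exp(c_{2}z^{2/H})$). This yields $\|u(t,x)-u(t,y)\|_{L^{p}}\le|x-y|^{\beta}\exp(Cp^{1/H})$, and raising to the $p$-th power produces~\eqref{eq:bnd-Lp-increments-ut}. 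The main technical obstacle is the bookkeeping of the simplex integral when the extra factor $|\eta_{n}|^{2\beta}$ is present; the payoff is that $\beta$ eats precisely into the $nH$ gain, whose deterioration remains compatible with the hypercontractive summation thanks to the restriction $\beta<2H-\tfrac12$.
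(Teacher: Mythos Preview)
Your proposal is correct and follows essentially the same route as the paper: chaos expansion, the bound $|e^{-ix\Sigma_n}-e^{-iy\Sigma_n}|\le C|x-y|^{\beta}|\Sigma_n|^{\beta}$, the change of variables $\eta_i=\xi_{\sigma(i)}+\cdots+\xi_{\sigma(1)}$ with the split $|\eta_i-\eta_{i-1}|^{1-2H}\le|\eta_i|^{1-2H}+|\eta_{i-1}|^{1-2H}$, Gaussian integration producing the extra exponent $-\beta$ at the last coordinate, Lemma~\ref{lem:intg-simplex} (where the restriction $\beta<2H-\tfrac12$ enters exactly as you note), and hypercontractivity plus the Mittag--Leffler summation. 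The only cosmetic difference is that the paper records the cruder bound $\Gamma(nH+\tfrac12)$ in place of your $\Gamma(nH-\beta+1)$, which is harmless.
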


\begin{proof}
First we estimate the $L^2$ norm using  the Wiener chaos expansion of the solution and the notation used in the proof of Proposition  \ref{thm:exist-uniq-chaos}. In this way we can write
\begin{equation}\label{m0}
\EE \left ( |I_n(f_n(\cdot, t,x))- I_n(f_n(\cdot,t,y)) |^2 \right)
=n! \| f_n(\cdot, t,x)- f_n(\cdot, t,y) \|^2_{\mathcal{H}^{\otimes n}} 
= n! c_H^n   L_{n}(x,y),
\end{equation}
where we have set
\begin{equation*}
L_{n}(x,y)
=
\| \mathcal{F} f_n (\cdot, t,x)-\mathcal{F} f_{ n}(\cdot, t,y)\|^2_{  L^2([0,t]^n\times \R^n, \lambda^n \times \mu^{n})}.
\end{equation*}
Furthermore, it is readily checked that
\begin{equation}\label{m1}
L_{n}(x,y)
=
\frac 1{ (n!)^2}  
 \int_{[0,t]^n}  \int_{\R^n} K_{xy}(\xi)
\prod_{i=1}^n e^{-\frac{1}{2}(s_{\si(i+1)}-s_{\si(i)})|\xi_{\si(i)}+\cdots +\xi_{\si(1)} |^2} 
 |\xi_{\si(i)}|^{1-2H} d\xi  ds ,
\end{equation}
where
\begin{equation*}
K_{xy}(\xi) \equiv 
\left|  e^{-ix (\xi_{\sigma(n)}+ \cdots + \xi_{\sigma(1)})} 
-
e^{-iy (\xi_{\sigma(n)}+ \cdots + \xi_{\sigma(1)})}  \right|^2 
=
\left|  e^{-ix (\xi_{n}+ \cdots + \xi_{1})} 
-
e^{-iy (\xi_{n}+ \cdots + \xi_{1})}  \right|^2 .
\end{equation*}
Notice that one can recast identity \eqref{m1} as
\begin{equation*}
L_{n}(x,y)
=
\frac 1{n!}  
 \int_{S_{n}(t)}  \int_{\R^n} K_{xy}(\xi)
\prod_{i=1}^n e^{-\frac{1}{2}(s_{i+1}-s_{i})|\xi_{i}+\cdots +\xi_{1} |^2} 
 |\xi_{i}|^{1-2H} d\xi  ds,
\end{equation*}
where we recall that $S_{n}(t)$ is defined in Lemma \ref{lem:intg-simplex}. Furthermore, the kernel $K_{xy}$ can be bounded as follows, for all $\beta\in(0,1)$
\begin{equation*}
\lln K_{xy}(\xi) \rrn
\le
   |x-y|^{2\beta} \lln  \xi_{1}+\cdots+\xi_{n} \rrn^{2\beta}.
\end{equation*}
Making the change of variable $\xi_{i}+\cdots + \xi_{1}=\eta_{i}$, for all $i=1,2,\dots, n$, with the convention   $\eta_0=0$  and  using the bound  $|\eta_i-\eta_{i-1} |^{1-2H} \le |\eta_i |^{1-2H} + | \eta_{i-1} |^{1-2H} $ as in the proof of Proposition \ref{thm:exist-uniq-chaos}, we obtain 
\begin{equation*}
L_{n}(x,y)
\le
\frac {|x-y|^{2\beta}}{ n!}    \sum_{\al\in D_{n}}
 \int_{S_{n}(t)}  \int_{\R^n} 
\lp\prod_{i=1}^{n-1} e^{-\frac{1}{2}(s_{i+1}-s_{i}) \eta_{i}^2}  
|\eta_i|^{\alpha_i}\rp   
e^{-\frac{1}{2}(t-s_{n})\eta_{n}^2}  
|\eta_{n}|^{\alpha_n+2\beta}
d\eta ds,
\end{equation*}
where we recall that $D_{n}$ has been introduced in \eqref{b11}.
Integrating with respect to the variables $\eta_i$ and using  Lemma \ref{lem:intg-simplex} thus yields
\begin{equation*}
L_{n}(x,y)
\le c^n
\frac {|x-y|^{2\beta}}{ n!}    \sum_{\al\in D_{n}}
 \int_{S_{n}(t)} \lp\prod_{i=1}^{n-1} (s_{i+1}-s_{i})^{-\frac{\al_{i}+1}{2}} \rp 
 (t-s_{n})^{-\frac{\al_{n }+2\beta+1}{2}} \, ds,
\end{equation*}
where 
\[
c= \max\left\{ \int_{\R} e^{-\frac 12 x^2} |x|^{\delta}dx \,; \,  \delta\in [0,4]  \right\}.
\]
At this point we can repeat the discussion following inequality \eqref{h15}. We find that, in order to ensure the convergence of the integral above, we have to choose $\beta<2H-\frac12$. Then applying Lemma~\ref{lem:intg-simplex}, we end up with
\begin{equation*}
L_{n}(x,y)
\le
\frac {C^{n}|x-y|^{2\beta}}{ n!\Gamma(nH+\frac 12)}.
\end{equation*}
Plugging this relation into \eqref{m0} and applying the hypercontracticity property on a fixed chaos, we have thus obtained
\begin{equation*}
\| I_n(f_n(\cdot, t,x))- I_n(f_n(\cdot,t,y)) \|_{L^{p}(\Omega)}
\le
\frac {C^{n} p^{\frac{n}{2}}|x-y|^{\beta}} { \Gamma\lp \frac{nH}{2}+1\rp},
\end{equation*}
from which \eqref{eq:bnd-Lp-increments-ut} is obtained exactly as in Proposition \ref{thm:exist-uniq-chaos}.
\end{proof}

We are now ready to prove the upper bound part of Theorem \ref{thm:space-asymptotics}.

\begin{proposition}\label{prop:upper-bound}
Under the assumptions of Theorem \ref{thm:space-asymptotics}, for all $t>0$ we have
\begin{equation}\label{eq:spatial-lower-bound}
\liminf_{R\to\infty}(\log R)^{-{\frac{1}{1+H}}}\log\lp \max_{\vert x\vert\le R}u(t,x) \rp
\le \lc \hat{c}_{H,t}\rc^{-\frac{1}{1+H}} \  \hskip.2in \text{a.s.},
\end{equation}
where $\hat{c}_{H,t}$ is defined in Corollary \ref{c.2.8} and is related to \eqref{eq:as-limit-in space} by Remark \ref{rmk-cHt-qHt}.
\end{proposition}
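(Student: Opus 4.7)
The plan is to prove the matching $\limsup$ version of the displayed bound (which obviously implies the $\liminf$ statement): fix an arbitrary $\la > \hat{c}_{H,t}^{-1/(1+H)}$ and show
\[
\limsup_{R\to\infty}(\log R)^{-1/(1+H)}\log\max_{|x|\le R}u(t,x)\le \la \quad\text{a.s.}
\]
By monotonicity of $R\mapsto\max_{|x|\le R}u(t,x)$ and $\log(m+1)/\log m \to 1$, it will suffice to verify this along $R = m\in\N$ via Borel--Cantelli; concretely, I plan to show that with $L(R):=(\log R)^{1/(1+H)}$, the probability $\PP(\max_{|x|\le m}u(t,x)>e^{\la L(m)})$ is summable in $m$.

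I will follow the discretization/oscillation scheme of Proposition~\ref{prop:lower-bound}. Introduce a grid $\cn_R:=\{j\delta_R : j\in\Z,\,|j\delta_R|\le R\}$ with spacing $\delta_R=R^{-\alpha}$ for a small $\alpha>0$ (to be fixed), so $|\cn_R|\asymp R^{1+\alpha}$, and split
\[
\max_{|x|\le R}u(t,x)\le \max_{x\in\cn_R}u(t,x)+\mathrm{Osc}_R,\quad \mathrm{Osc}_R:=\sup_{|x|,|y|\le R,\,|x-y|\le\delta_R}|u(t,x)-u(t,y)|.
\]
It will then be enough to show that both $\PP(\max_{x\in\cn_R}\log u(t,x)>(\la-\eta)L)$ and $\PP(\mathrm{Osc}_R>1)$ are summable for suitably small $\eta>0$. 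For the discretized maximum, Corollary~\ref{c.2.8} yields for small $\eta,\ep>0$ and $R$ large
\[
\PP(\log u(t,x)>(\la-\eta)L)\le R^{-\hat{c}_{H,t}(1-\ep)(\la-\eta)^{1+H}},
\]
so a union bound gives $\PP(\max_{x\in\cn_R}\log u(t,x)>(\la-\eta)L)\le c\,R^{1+\alpha-\hat{c}_{H,t}(1-\ep)(\la-\eta)^{1+H}}$. Since $\hat{c}_{H,t}\la^{1+H}>1$, I can choose $\alpha,\eta,\ep$ small enough that this exponent is $<-1$, delivering summability.

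For the oscillation term, I will apply Proposition~\ref{prop9} with some $\beta\in(0,2H-\tfrac12)$, using a Garsia--Rodemich--Rumsey inequality on $[-R,R]$ and the scale $p=p(R):=(\log R)^{H/(1+H)}$ already used in Corollary~\ref{cor:cvgce-Un-beta-u}. Fixing an auxiliary $\theta\in(\beta/(1+\alpha),\beta)$, the GRR bound gives $\mathrm{Osc}_R^p\le C^p\,\delta_R^{p\theta-2}\,K$ with $K:=\int_{-R}^R\!\int_{-R}^R|u(t,x)-u(t,y)|^p|x-y|^{-p\theta}dx\,dy$; taking expectations and invoking Proposition~\ref{prop9} leads to
\[
\be[\mathrm{Osc}_R^p]\le C^p\,R^{2+2\alpha+p[(\beta-\theta)-\alpha\theta]}\exp(Cp^{1+1/H}).
\]
The choice of $p$ produces $p^{1+1/H}=\log R$ and $p\log R=(\log R)^{(1+2H)/(1+H)}$, and with $\theta':=(\beta-\theta)-\alpha\theta<0$, Markov's inequality will yield
\[
\PP(\mathrm{Osc}_R>1)\le \be[\mathrm{Osc}_R^p]\le \exp\!\big(\theta'(\log R)^{(1+2H)/(1+H)}+O(\log R)\big),
\]
which decays super-polynomially, hence summably, in $R$. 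Combining both summability estimates via Borel--Cantelli gives almost surely $\max_{|x|\le m}u(t,x)\le e^{(\la-\eta)L(m)}+1\le e^{\la L(m)}$ for all large $m$; the $\limsup$ bound along integers follows immediately, passing from integers to reals is automatic by monotonicity, and letting $\la\downarrow\hat{c}_{H,t}^{-1/(1+H)}$ through a countable sequence concludes the proof.

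The main obstacle will be the fluctuation step. The scale $p=(\log R)^{H/(1+H)}$ must be chosen precisely so that the hypercontractive factor $\exp(Cp^{1+1/H})=R^{O(1)}$ in Proposition~\ref{prop9} is dominated by the H\"older gain $\delta_R^{\beta p}\approx\exp(-\alpha\beta(\log R)^{(1+2H)/(1+H)})$; this dominance relies on the exponent $(1+2H)/(1+H)>1$, which in turn rests on the condition $\beta<2H-\tfrac12$ and hence implicitly on the standing assumption $H>1/4$.
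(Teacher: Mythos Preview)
Your strategy matches the paper's: discretize $[-R,R]$, handle the grid maximum by a union bound together with the tail estimate of Corollary~\ref{c.2.8}, and control the oscillation via Proposition~\ref{prop9} and a Garsia--Rodemich--Rumsey inequality with $p=(\log R)^{H/(1+H)}$. The paper parametrizes the mesh by $\cn_R=R^{\eta}$, $\ell\asymp R^{1-\eta}$ with $\eta>1$, which is exactly your $\alpha=\eta-1>0$; and it applies Garsia locally on a single interval $I_1$ of length $\ell$ using stationarity, whereas you apply it globally on $[-R,R]$. Both variants work, and your computation of the oscillation bound is correct.

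There is, however, one concrete slip in the Borel--Cantelli step for the discretized maximum. You run Borel--Cantelli along $R=m\in\N$ and then assert that, because $\hat c_{H,t}\la^{1+H}>1$, you can make the exponent
\[
1+\alpha-\hat c_{H,t}(1-\ep)(\la-\eta)^{1+H}
\]
strictly less than $-1$ by choosing $\alpha,\eta,\ep$ small. This is false: sending $\alpha,\eta,\ep\to 0$ the exponent tends to $1-\hat c_{H,t}\la^{1+H}$, which is $<0$ but is $<-1$ only when $\hat c_{H,t}\la^{1+H}>2$. Thus the argument as written only yields
\[
\limsup_{R\to\infty}(\log R)^{-\frac{1}{1+H}}\log\max_{|x|\le R}u(t,x)\le \lp\frac{2}{\hat c_{H,t}}\rp^{\frac{1}{1+H}},
\]
off by a factor $2^{1/(1+H)}$ from the claimed constant. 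The paper avoids this by taking the geometric subsequence $R=2^{m}$: then summability $\sum_m (2^{m})^{-\nu}<\infty$ only requires $\nu>0$, i.e.\ the exponent merely $<0$, and every $\la>\hat c_{H,t}^{-1/(1+H)}$ is admissible. Switching your Borel--Cantelli subsequence from the integers to $2^{m}$ (and using the same monotonicity argument, now with $\log(2^{m+1})/\log(2^{m})\to 1$, to pass back to all $R$) closes the gap with no other changes needed.
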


\begin{proof}
We shall use the same kind of notation as in the proof of Proposition \ref{prop:lower-bound}, sometimes with a slightly different meaning (which should be clear from the context).
 Fix $R>0$ and divide the interval $[-R,R]$ into subintervals $I_j$ with the same length, for $j=1, \dots ,\cn_{R}$ (notice that $\cn_{R}$ is now a cardinal instead of being a set as in Proposition \ref{prop:lower-bound}), of length less than or equal to $\ell$, for some $\ell  >0 $ to be chosen later. Pick one point $x_j$ of each interval $I_j$. By convention, we assume that $I_{1}$ contains 0, and we choose $x_{1}=0$. For any $x\in I_j$ we can write
 \[
 u(t,x)  \le  u(t,x_j) + |u(t,x)-u(t,x_j)|,
 \]
and hence:
 \[
 \max_{|x|  \le R } u(t,x) \le \max _{j} u(t,x_j) + \max_j  \sup_{x,y \in I_j , |x-y| \le  \ell} |u(t,x) -u(t,y)|.
 \]
 Therefore a simple elaboration of \eqref{l5.5} yields
  \begin{equation}\label{n1}
\log  \max_{|x|  \le R } u(t,x) \le \log 2
+  \log \max _{j}  \max \left(  u(t,x_j) , \sup_{x,y \in I_j, |x-y| \le  \ell} |u(t,x) -u(t,y)| \right).
 \end{equation}
Consider now $\lambda,\delta>0$.  Choose $\cn_{R}$ and $\ell$ large enough, so that $\cup_{j\le \cn_{R}}I_{j}$ covers $[-R,R]$. Owing to the stationarity of $u$, we can write
\begin{eqnarray*}
\PP\left(   \log  \max_{|x|  \le R } u(t,x)    \ge ( \lambda +\delta)  (\log R)^{ \frac 1{1+H}} \right)
&\le&
\sum_{j=1}^{\cn_{R}}
\PP\left(   \log  \max_{x\in I_{j}} u(t,x)    \ge ( \lambda +\delta)  (\log R)^{ \frac 1{1+H}} \right) \\
&\le&
\cn_{R} \, \PP\left(   \log  \max_{x\in I_{1}} u(t,x)    \ge ( \lambda +\delta)  (\log R)^{ \frac 1{1+H}} \right).
\end{eqnarray*}
Hence, inequality \eqref{n1} enables to get, for $R$ large enough
\begin{equation}\label{rab1}
 \PP\left(   \log  \max_{|x|  \le R } u(t,x)    \ge ( \lambda +\delta)  (\log R)^{ \frac 1{1+H}} \right)
\le \cn_{R} \, \PP \left( \log u(t,0) \ge \lambda  (\log R)^{ \frac 1{1+H}} \right)
+ F_{R},
\end{equation}
where $F_{R}$ is a fluctuation term given by
\begin{equation}\label{n2}
F_{R}=
\cn_{R} \, \PP \left(  \log \sup \lcl |u(t,x) -u(t,y)| : \, x,y \in I_1, |x-y |\le \ell \rcl
\ge \lambda (\log R)^{ \frac 1{1+H}} \right).
\end{equation}
Furthermore, according to Corollary \ref{c.2.8}, for any $\rho>0$ arbitrarily small and $R$ large enough we have
\[
\PP \left\{ \log u(t,0) \ge \lambda  (\log R)^{ \frac 1{1+H}} \right\}
\le  R^{- (\hat{c}_{H,t} -\rho) \lambda ^{1+H}}.
\]

Let us now specify our parameters: we assume that $\rho,\la$ satisfy $(\hat{c}_{H,t} -\rho) \lambda^{1+H} >1$, that is $\lambda^{1+H}$ is arbitrarily close to $\hat{c}_{H,t}^{-1/(1+H)}$, as in condition \eqref{e.2.22}. Then we can choose
$\cn_{R}=R^{\eta}$, with $1<\eta <  (\hat{c}_{H,t}  -\rho) \lambda^{1+H} $, and $\ell=CR^{1-\eta}$ so that $\cup_{j\le \cn_{R}}I_{j}$ covers $[-R,R]$. We get
\begin{equation}  \label{rab2}
\cn_{R} \PP \left\{ \log u(t,0) \ge \lambda  (\log R)^{ \frac 1{1+H}} \right\}
\le  R^{- \nu},
\quad\text{with}\quad
\nu= (\hat{c}_{H,t} -\rho) \lambda ^{1+H} -\eta,
\end{equation}
an notice that $\nu>0$.

The fluctuation term $F_{R}$ defined by \eqref{n2} can be handled as follows:
by Chebychev's inequality, we have
\begin{equation}\label{n3}
F_{R}
\le
R^\eta e^{-p\lambda (\log R)^{\frac 1{1+H}}}   
\EE \left[\sup \lcl |u(t,x) -u(t,y)|^{p} :\, x,y \in I_1, |x-y |\le \ell \rcl \right].
\end{equation}
Consider now $0<\ga<\beta<2H-\frac{1}{2}$ and $p$ such that $\beta-\ga>p^{-1}$. According to Garsia's lemma \cite{Ga}, we have
\begin{equation*}
\EE\lc \|u(t,\cdot)\|_{\ga,I_{1}}^{p} \rc
\le
c_{\ga,p} \int_{I_{1}^{2}} \frac{ \be \lp |u(t,x)-u(t,y)|^{p} \rp}{|x-y|^{\ga p + 2}} \, dx dy,
\end{equation*}
where $\|f\|_{\ga,I_{1}}$ stands for the $\ga$-H\"older norm of $f$ on the interval $I_{1}$. Plugging the result of Proposition \ref{prop9} into this inequality, we obtain
\begin{equation*}
\EE\lc \|u(t,\cdot)\|_{\ga,I_{1}}^{p} \rc
\le
c_{\ga,\beta,p} |I_{1}|^{p(\beta-\ga)} e^{C p^{1+\frac{1}{H}}}
= c_{\ga,\beta,p} \, \ell^{p(\beta-\ga)} e^{C p^{1+\frac{1}{H}}},
\end{equation*}
and going back to \eqref{n3}, we end up with
\begin{equation*}
F_{R}
\le
c_{\ga,\beta,p} \, R^\eta  \, \ell^{p(\beta-\ga)} 
\exp\lp -p\lambda (\log R)^{\frac 1{1+H}} + C p^{1+\frac{1}{H}}\rp.
\end{equation*}
Recall that $\ell$ is of the form $CR^{-(\eta-1)}$, and we also take $p$  such that the dominant term in the exponential above is $p\lambda (\log R)^{\frac 1{1+H}}$. This is achieved for instance by taking 
\begin{equation*}
p=\lp\frac{\la}{2C }\rp^{H} \, (\log(R))^{\frac{H}{1+H}},
\end{equation*}
which implies $Cp^{1+\frac 1H}=\frac 12 p\lambda (\log R) ^{\frac 1 {1+H}}$. In this way we obtain
\begin{equation}\label{n4}
F_{R} \le c_{\ga,\beta, p} \, R^{\eta- p(\beta-\ga)(\eta-1)} \, \exp\lp -\frac 12 \, p\lambda (\log R)^{\frac 1{1+H}} \rp.
\end{equation}
With the values of the parameters we have considered so far, observe that we can pick $p$ such that $p(\beta-\ga)>\frac{\eta}{\eta-1}$. In this case we have $\eta- p(\beta-\ga)(\eta-1)=-\ka$ with $\ka>0$, and we can recast \eqref{n4} as
\begin{equation}\label{n5}
F_{R} \le c_{\ga,\beta,p} \, R^{-\ka}.
\end{equation}
Gathering our bounds \eqref{rab2} and \eqref{n5} into \eqref{rab1}, we thus have
\begin{equation}\label{n6}
\PP\left(   \log  \max_{|x|  \le R } u(t,x)    \ge ( \lambda +\delta)  (\log R)^{ \frac 1{1+H}} \right)
\le
R^{- \nu} + R^{-\ka}.
\end{equation}

We can now conclude in the following way: relation \eqref{n6} asserts that
 \[
\sum_{m\ge 1} \PP\left\{   \log  \max_{|x|  \le 2^m } u(t,x)    \ge ( \lambda +\delta)  (\log 2^m)^{ \frac 1{1+H}} \right\} <\infty.
\]
Hence Borel-Cantelli's lemma applies, and we deduce, almost surely
\[
\limsup_{m\rightarrow \infty} 
\, (\log 2^m)^{- \frac 1{1+H}}  
\log \lp \max_{|x|  \le 2^m } u(t,x)\rp    \ge \lambda +\delta
\ge (\hat{c}_{H,t} +\rho) ^{-\frac 1{1+H}} +\delta.
\]
Because $\delta>0$ and $\rho>0$ are arbitrary, we thus get that almost surely
\[
\limsup_{m\rightarrow \infty} (\log 2^m)^{- \frac 1{1+H}}  \log  \max_{|x|  \le 2^m } u(t,x)    \le \hat{c}_{H,t}  ^{-\frac 1{1+H}},
\]
which implies
\[
\limsup_{R\rightarrow \infty} (\log R)^{- \frac 1{1+H}}  \log  \max_{|x|  \le R } u(t,x)    \le \hat{c}_{H,t} ^{-\frac 1{1+H}}.
\]
This completes the proof of the upper bound.
\end{proof}

\section{Appendix}

\begin{lemma}  \label{lemA} 
For any $\alpha \in (0,1)$ and  we have
\[
\sup_{\beta\ge 1} \int_\R |\xi|^\alpha \left| \int_{[-\beta,\beta]}  e^{ix \xi -\frac {x^2}2} dx\right|^2 d\xi <\infty.
\]
\end{lemma}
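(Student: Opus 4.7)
The plan is to split the frequency integral at $|\xi|=1$ and control the low- and high-frequency parts separately, uniformly in $\beta\ge 1$. Write
\[
g_\beta(\xi) = \int_{-\beta}^\beta e^{ix\xi-x^2/2}\,dx.
\]

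For the low-frequency piece, I would use the trivial bound
\[
|g_\beta(\xi)| \le \int_{-\beta}^\beta e^{-x^2/2}\,dx \le \sqrt{2\pi},
\]
valid for every $\xi\in\R$ and $\beta\ge 1$. This immediately gives
\[
\int_{|\xi|\le 1} |\xi|^\alpha |g_\beta(\xi)|^2\,d\xi \le 2\pi \int_{-1}^{1} |\xi|^\alpha\,d\xi = \frac{4\pi}{\alpha+1},
\]
a bound independent of $\beta$.

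For the high-frequency piece I would perform one integration by parts, writing $e^{ix\xi}=\frac{d}{dx}(e^{ix\xi}/(i\xi))$. This yields
\[
g_\beta(\xi) = \frac{2e^{-\beta^2/2}\sin(\beta\xi)}{\xi} + \frac{1}{i\xi}\int_{-\beta}^\beta x\, e^{ix\xi-x^2/2}\,dx.
\]
The boundary term has modulus at most $2e^{-\beta^2/2}/|\xi|\le 2/|\xi|$, while the remaining integral is bounded in absolute value by $\frac{1}{|\xi|}\int_\R |x| e^{-x^2/2}\,dx = 2/|\xi|$. Hence
\[
|g_\beta(\xi)| \le \frac{4}{|\xi|}, \qquad \beta\ge 1,\ \xi\neq 0,
\]
uniformly in $\beta$. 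Therefore
\[
\int_{|\xi|>1} |\xi|^\alpha |g_\beta(\xi)|^2\,d\xi \le 16\int_{|\xi|>1} |\xi|^{\alpha-2}\,d\xi = \frac{32}{1-\alpha},
\]
which is finite precisely because $\alpha<1$. Adding the two pieces produces a bound on $\int_\R |\xi|^\alpha |g_\beta(\xi)|^2 d\xi$ which is independent of $\beta\ge 1$, proving the lemma.

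There is no real obstacle in this argument. The hypothesis $\alpha\in(0,1)$ is used in exactly one spot: to guarantee that $|\xi|^{\alpha-2}$ is integrable at infinity after one integration by parts in $x$ produces the $|\xi|^{-1}$ decay of $g_\beta$. One could alternatively proceed via Plancherel and the Gagliardo seminorm representation of the $\dot H^{\alpha/2}$ norm of $\mathbf{1}_{[-\beta,\beta]}(x)e^{-x^2/2}$, splitting the double integral according to which of $x,y$ lie outside $[-\beta,\beta]$ and absorbing the jump contributions at $\pm\beta$ using the rapid Gaussian decay $e^{-\beta^2/2}$, but the direct integration-by-parts argument above is considerably shorter.
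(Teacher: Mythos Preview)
Your proof is correct. The low-frequency bound is trivial, and the high-frequency bound via one real integration by parts is clean: the boundary term and the remaining integral each contribute $O(1/|\xi|)$ uniformly in $\beta$, so $|g_\beta(\xi)|\le 4/|\xi|$ and the integrability of $|\xi|^{\alpha-2}$ on $\{|\xi|>1\}$ does the rest.

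The paper takes a genuinely different route. It completes the square in the Gaussian exponent, writing $g_\beta(\xi)=e^{-\xi^2/2}\int_{-\beta-i\xi}^{\beta-i\xi} e^{-z^2/2}\,dz$, and then deforms the complex contour back to the real segment $[-\beta,\beta]$ plus two vertical segments of length $|\xi|$ at $\pm\beta$. The real segment contributes a term damped by $e^{-\xi^2/2}$, and the vertical pieces are estimated by parametrizing $z=\beta-it\xi$, $t\in[0,1]$, which produces a factor $e^{-\beta^2}$ times an integral $\int_0^1 e^{-\frac12(1-t^2)\xi^2}\,dt$ that is bounded by $c_\varepsilon |\xi|^{-2+\varepsilon}$. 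One then integrates $|\xi|^{\alpha-2+\varepsilon}$ at infinity.

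Your argument is shorter and more elementary: a single real integration by parts replaces the contour shift, and you avoid the auxiliary $\varepsilon$. The paper's approach, on the other hand, makes the Gaussian structure of the integrand more visible and extracts the extra $e^{-\beta^2}$ decay from the boundary contributions, which is stronger than needed here but could be useful if one wanted quantitative decay in $\beta$. For the purpose of this lemma either route suffices, and yours is the more direct one.
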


\begin{proof}
Clearly the integral over $\{ |\xi | \ \le 1\}$ is uniformly bounded. So, it suffices to consider the integral over the unbounded domain  $\{ |\xi | > 1\}$. We have
\begin{eqnarray*}
 \int_{-\beta} ^\beta  e^{ix \xi -\frac {x^2}2} dx &=& e^{-\frac {\xi^2}2}  \int_{-\beta}^\beta e^{-\frac 12 (x-i\xi)^2 }dx=
 e^{-\frac {\xi^2}2}   \int_{-\beta-i \xi }^{\beta -i\xi}  e^{-\frac{ x^2}2 }dx \\
 &=& 
 e^{-\frac {\xi^2}2}  \left(   \int_{-\beta}^\beta e^{-\frac {x^2} 2}  dx  - \int_{\beta} ^{\beta-i\xi}e^{-\frac{ x^2}2 }dx
 -  \int_{-\beta-i\xi } ^{-\beta}e^{-\frac{ x^2}2 }dx \right).
 \end{eqnarray*}
 Then, it suffices to consider the term
 \begin{align*}
 &\int_{|\xi| >1}   |\xi|^\alpha   e^{-\xi^2}  \left| \int_{\beta} ^{\beta-i\xi}e^{-\frac{ x^2}2 }dx \right|^2 d\xi
 =\int_{|\xi| >1}   |\xi|^{\alpha+2}   e^{- \xi^2 } \left| \int_0^1e^{-\frac 12 (\beta -i t\xi)^2 }dt \right|^2 d\xi \\
 &\le e^{-\beta^2}  \int_{|\xi| >1}   |\xi|^{\alpha+2}     \left| \int_0^1e^{-\frac 12 (1-t^2)\xi^2 }dt \right|^2 d\xi 
 \le c_{\ep} e^{-\beta^2}  \int_{|\xi| >1}   |\xi|^{\alpha-2+\ep}     \left| \int_0^1 \frac{dt}{(1-t^{2})^{1-\ep}} \right|^2 d\xi
 \\
 &\le c_{\ep} \int_{|\xi| >1}   |\xi|^{\alpha-2+\ep}    d\xi <\infty,
  \end{align*}
where $\ep$ is an arbitrarily small positive constant.
\end{proof}


\begin{thebibliography}{99}



\bibitem{BCD} Bahouri, H.,  Chemin, J., and  Danchin, R.  Fourier analysis and nonlinear partial differential equations.
Grundlehren der Mathematischen Wissenschaften [Fundamental Principles of Mathematical Sciences], 343. {\it Springer, Heidelberg}, 2011. xvi+523 pp.

\bibitem{BJQ}  Balan, R., Jolis, M. and Quer-Sardanyons, L.  SPDEs with fractional noise in space with index $H<1/2$. \textit{arXiv} preprint.


\bibitem{Ch1} Chen, X. {\sl Random Walk Intersections: Large Deviations
and Related Topics.} Mathematical Surveys and Monographs, {\bf 157}.
American Mathematical Society, Providence 2009.


\bibitem{Ch4} Chen, X. (2012)
Quenched asymptotics for Brownian motion of
renormalized Poisson potential and for the
related parabolic Anderson models. {\sl Ann. Probab. \bf 40} 1436-1482.


\bibitem{Ch2} Chen, X. (2014).
Quenched asymptotics for Brownian motion in generalized
Gaussian potential. {\sl Ann. Probab. \bf 42} 576-622.

\bibitem{Ch3} Chen, X. (2016).
 Spatial asymptotics for the parabolic Anderson models with
generalized time-space Gaussian noise. {\sl Ann. Probab. \bf 44}
1535-1598. 


\bibitem{ChPh} Chen, X. and Phan, T. V. Free energy in a mean field of
Brownian particles. Preprint.

\bibitem{CJK}
 Conus, D., Joseph, M.,  Khoshnevisan, D. (2013).
On the chaotic character of the stochastic heat equation, before
the onset of intermittency. {\sl Ann. Probab. \bf 41} 2225-2260.

\bibitem{CJKS}
Conus, D., Joseph, M., Khoshnevisan, D. and Shiu, S-Y.
(2013). On the chaotic character of the stochastic heat equation, II.
{\sl Probab. Theor. Rel. Fields \bf 156} 483-533.
\medskip

\bibitem{Da}
Dalang, R. C. (1999). Extending martingale measure stochastic integral with
applications to spatially homogeneous S.P.D.E's. {\sl Electron. J. Probab.
\bf 4} 1-29. 

\bibitem{DPZ} Da Prato,G.,   Zabczyk, J.
{\it Stochastic Equations in Infinite Dimensions}
(Cambridge University Press, 1992).

\bibitem{EMOF}
Erdélyi, A., Magnus, W., Oberhettinger, F., Tricomi, F.  
\emph{Higher transcendental functions. Vol. III.} 
Robert E. Krieger Publishing Co., 1981.

\bibitem{Ga} 
Garsia, A. 
Continuity properties of Gaussian processes with multidimensional time parameter.  
Proceedings of the Sixth Berkeley Symposium on Mathematical Statistics and Probability Vol. II: Probability theory,  369--374. Univ. California Press (1972). 


\bibitem{HHLNT} Hu, Y., Huang, J., Le, K., Nualart, D., Tindel, S. Stochastic heat equation 
with rough dependence in space. Preprint.


\bibitem{PT}Pipiras, V., Taqqu, M. Integration questions related to fractional Brownian motion.
{\it Probab. Theory Related Fields } {\bf 118} (2000), no. 2, 251-291.

\bibitem{Wa}
 Walsh, J. B.  An Introduction to Stochastic Partial Differential
Equations, in: Ecole d'\'et\'e de probabilit\'es de Saint-Flour,
XIV-- 1984, 265-439, Lecture Notes in Math., Vol. 1180, Springer, Berlin
1986.




\end{thebibliography}
\end{document}